\documentclass[11pt,a4paper]{amsart}

\usepackage{amsmath, amssymb, amsthm}
\usepackage{mathrsfs} 
\usepackage{hyperref} 
\usepackage{enumitem} 
\usepackage{geometry} 
\usepackage[utf8]{inputenc}

\newtheorem{theorem}{Theorem}[section]
\newtheorem{proposition}[theorem]{Proposition}

\newtheorem{corollary}[theorem]{Corollary}

\theoremstyle{definition}
\newtheorem{definition}[theorem]{Definition}
\newtheorem{example}[theorem]{Example}

\theoremstyle{remark}
\newtheorem{remark}[theorem]{Remark}

\newcommand{\Ham}{\operatorname{Ham}}
\newcommand{\Diff}{\operatorname{Diff}}

\newcommand{\ImMap}{\operatorname{Im}}
\newcommand{\id}{\operatorname{id}}
\newcommand{\Vol}{\operatorname{Vol}}
\newcommand{\osc}{\operatorname{osc}}
\newcommand{\Cal}{\operatorname{Cal}}
\newcommand{\Lomega}{\mathcal{L}^\omega}

\newcommand{\domega}{d^\omega}
\newcommand{\domegastar}{d^{\omega*}}

\title[Geometric Properties and Flux of LCS Diffeomorphisms]{GEOMETRIC PROPERTIES AND FLUX OF LOCALLY CONFORMALLY SYMPLECTIC DIFFEOMORPHISMS}

\author{S. Tchuiaga}
\address{Department of Mathematics, University of Buea, P.O. Box 63, Buea, South West Region, Cameroon}
\email{tchuiaga.kameni@ubuea.cm}

\author{F. Balibuno}
\address{Department of Mathematics and Computer Science, Faculty of Sciences, University of Kinshasa, P.O. Box 190, Kinshasa XI, DRC}
\email{fidele.balibuno@unikin.ac.cd}

\date{April 30, 2025}

\subjclass[2020]{Primary 53D12, 53D05; Secondary 53D22, 58D05, 53D35.}

\keywords{Locally conformally symplectic manifold, Lichnerowicz cohomology, Flux homomorphism, Hamiltonian LCS diffeomorphism, Mapping torus, Lagrangian neighborhood, Calabi invariant, Energy-Capacity Inequality, Splitting Theorem, Deformation Rigidity, Non-displaceability, Fragmentation.}

\begin{document}

	\begin{abstract}
		
		We investigate the geometric and topological properties of the group of locally conformally symplectic (LCS) diffeomorphisms, utilizing the LCS flux homomorphism defined by S. Haller. By analyzing the flux map from the universal cover of the identity component $(\ker \Phi)_0$ to the first Lichnerowicz cohomology group $H_\omega^1(M)$, we establish a short exact sequence characterizing the Hamiltonian subgroup $\Ham_\Omega(M)$ and provide conditions for its topological splitting as a semidirect product. We develop LCS analogues of fundamental symplectic results, including a Weinstein neighborhood theorem, a flux rigidity theorem for homotopies, and a characterization of LCS structures on mapping tori. A central theme of this work is the influence of the Hodge decomposition of the Lee form $\omega = dh + l$. In the exact case ($l=0$), we utilize the global conformal equivalence to symplectic structures to establish energy-capacity inequalities, an LCS Hofer metric, and non-displaceability results. We explicitly analyze the relationship between the LCS Calabi invariant and its symplectic counterpart, showing they are controlled by a multiplicative factor depending on the conformal weight. For the general non-exact case ($l \neq 0$), we introduce a Twisted Calabi invariant that captures the interaction between Hamiltonian dynamics and the harmonic component of the Lee form.
		
	\end{abstract}
	
	\maketitle
	
	\tableofcontents
	
	\section{Introduction}
	
	Locally conformally symplectic (LCS) geometry provides a natural generalization of symplectic geometry. An LCS manifold $(M, \Omega, \omega)$ consists of a $2n$-dimensional manifold $M$, a non-degenerate 2-form $\Omega$, and a closed 1-form $\omega$ (the Lee form) satisfying the structure equation $d\Omega = -\omega \wedge \Omega$. Introduced by H.C. Lee \cite{Lee1943}, this geometry connects various fields including contact and complex geometry. A fundamental dichotomy in LCS geometry arises from the Hodge decomposition of the closed Lee form $\omega$. On a compact oriented Riemannian manifold, any closed 1-form $\omega$ can be uniquely decomposed as $\omega = dh + l$, where $h$ is a smooth function and $l$ is a harmonic 1-form. When $l = 0$, $\omega$ is exact ($\omega = dh$), and the LCS structure is globally conformally equivalent to a symplectic structure via $\Omega_h = e^h\Omega$. In this exact case, many LCS concepts reduce to their symplectic counterparts. When $l \neq 0$, the harmonic component represents a nontrivial de Rham cohomology class, and the structure is truly locally conformally symplectic without being globally conformally symplectic. This decomposition provides a natural measure of how "far" an LCS structure is from being symplectic and fundamentally influences the geometry and topology of LCS diffeomorphism groups. A central theme in geometric structures is understanding the group of diffeomorphisms that preserve them. For LCS manifolds, these are the LCS diffeomorphisms $g$, characterized by the conditions $g^*\Omega = a^{-1}\Omega$ and $g^*\omega = \omega + d(\ln a)$ for some smooth positive function $a$. The study of the structure, topology, and geometry of the group of compactly supported LCS diffeomorphisms, $\Diff_\Omega(M)$, and its identity component $\Diff_{\Omega,0}(M)$, is fundamental.	A powerful tool for studying the topology of diffeomorphism groups is the flux homomorphism. Building upon the work of S. Haller \cite{Haller2005}, who defined the LCS flux homomorphism $\widetilde{\Psi}$ mapping paths in a specific subgroup $(\ker \Phi)_0 \subseteq \Diff_{\Omega,0}(M)$ to the first Lichnerowicz cohomology group $H_\omega^1(M)$ (Theorem \ref{thm:haller_flux}), this paper explores numerous consequences of this construction. The Hodge decomposition $\omega = dh + l$ significantly affects the target space $H_\omega^1(M)$: in the exact case ($l=0$), we have $H_\omega^1(M) \cong H_{dR}^1(M)$ via $\alpha \mapsto e^h\alpha$, while in the non-exact case, $H_\omega^1(M)$ differs from de Rham cohomology group.\\
	
	We present the short exact sequence associated with the flux (Proposition \ref{prop:exact_sequence}):
	\[
	1 \longrightarrow \Ham_\Omega(M) \xrightarrow{i} (\ker \Phi)_0 \xrightarrow{\mathcal{F}} H_\omega^1(M)/\Delta \longrightarrow 1,
	\]
	where $\Ham_\Omega(M)$ is the group of Hamiltonian LCS diffeomorphisms (defined as the kernel of the induced flux $\mathcal{F}$), and $\Delta$ is the discrete period group. This sequence reveals how the Hamiltonian subgroup sits inside $(\ker \Phi)_0$. We further investigate conditions under which this sequence splits topologically as a semidirect product (Theorem \ref{thm:splitting}).\\

	Beyond this structural result, we present several geometric theorems within the LCS context, many of which are analogues of well-known results in symplectic geometry and are connected to the concept of flux or Hamiltonian dynamics. The Hodge decomposition $\omega = dh + l$ clarifies which results hold generally for all LCS structures versus those specific to the exact case:
	\begin{itemize}
		\item An LCS version of the Weinstein Lagrangian neighborhood theorem (Theorem \ref{thm:weinstein}).
		\item A flux rigidity phenomenon (Theorem \ref{thm:flux_rigidity}, with a full proof provided).
		\item A characterization stating that the mapping torus of an LCS diffeomorphism $g \in (\ker \Phi)_0$ admits an LCS structure compatible with the fibration if and only if the flux of $g$ vanishes (Theorem \ref{thm:mapping_torus}).
		\item Properties of the LCS Calabi invariant, energy, and Hofer metric \emph{only in the exact case $\omega = dh$} (Theorems \ref{thm:calabi_exact}, \ref{thm:energy_exact}, \ref{thm:hofer_exact}).
		\item Deformation rigidity for zero-flux isotopies (Theorem \ref{thm:deformation_rigidity}).
		\item Fragmentation property for Hamiltonian LCS diffeomorphisms based on \cite{Banyaga1978} (Theorem \ref{thm:fragmentation}).
		\item An LCS non-displaceability theorem for Lagrangian submanifolds \emph{in the exact case $\omega = dh$} (Theorem \ref{thm:non_displaceability}).
	\end{itemize}
	
	Collectively, these results develop tools for analyzing the structure of the LCS diffeomorphism group and related geometric objects like Lagrangian submanifolds, extending concepts from symplectic geometry to the locally conformal setting via the lens of the flux homomorphism and clarifying the role of the harmonic component of the Lee form.
	
	\subsection*{Organization of the Paper}
	The paper is organized as follows. Section 2 reviews the foundational definitions of LCS manifolds, the Morse-Novikov (Lichnerowicz) cohomology, and the crucial Hodge decomposition of the Lee form into exact and harmonic components. Section 3 focuses on the group structure of LCS diffeomorphisms; we detail Haller's flux homomorphism, derive the associated short exact sequence, prove the splitting theorem, and present geometric applications including the LCS Weinstein theorem and flux rigidity. Section 4 is dedicated to the Hamiltonian subgroup $\Ham_\Omega(M)$. Here, we distinguish between the exact case—where we establish the Hofer metric, non-displaceability, and the relationship between the LCS and symplectic Calabi invariants—and the non-exact case, where we introduce the Twisted Calabi invariant and discuss topological properties such as deformation rigidity and fragmentation. Section 5 illustrates these concepts through explicit calculations on the Kodaira-Thurston manifold, highlighting the distinct behaviors of non-exact LCS structures. Finally, Section 6 summarizes the results and outlines future research directions.

	\section{Preliminaries on Locally Conformally Symplectic Geometry}
	
	This section reviews the fundamental definitions and properties of locally conformally symplectic (LCS) manifolds and related concepts.
	
	\begin{definition}[LCS Manifold]
		A \emph{locally conformally symplectic (LCS) manifold} is a triple $(M, \Omega, \omega)$, where $M$ is a $2n$-dimensional manifold, $\omega$ is a closed 1-form (the \emph{Lee form}), and $\Omega$ is a non-degenerate 2-form satisfying
		\[
		d\Omega = -\omega \wedge \Omega.
		\]
	\end{definition}
	
	When $\omega = 0$, $(M, \Omega)$ becomes a symplectic manifold. The non-degeneracy of $\Omega$ induces a bundle isomorphism $TM \to T^*M$ via $X \mapsto i_X \Omega$.
	
	\begin{definition}[Morse-Novikov Differential]
		Let $\omega$ be a closed 1-form on $M$. The \emph{Morse-Novikov differential operator} (or \emph{Lichnerowicz differential}) is the map $d^\omega : \Omega^p(M) \to \Omega^{p+1}(M)$ defined by
		\[
		d^\omega \alpha := d\alpha + \omega \wedge \alpha.
		\]
		This operator satisfies:
		\begin{enumerate}
			\item $d^\omega$ is $\mathbb{R}$-linear.
			\item $d^\omega \circ d^\omega = 0$.
			\item For $\alpha \in \Omega^p(M)$ and $\beta \in \Omega^q(M)$, $d^\omega(\alpha \wedge \beta) = (d^\omega \alpha) \wedge \beta + (-1)^p \alpha \wedge (d^\omega \beta)$.
			\item (Pullback Property) For a smooth map $g : N \to M$, $g^*(d^\omega \alpha) = d^{g^*\omega}(g^*\alpha)$.
		\end{enumerate}
	\end{definition}
	
	\begin{definition}[$d^\omega$-Closed and $d^\omega$-Exact Forms]
		A differential $p$-form $\alpha$ is \emph{$d^\omega$-closed} if $d^\omega \alpha = 0$ and \emph{$d^\omega$-exact} if there exists a $(p-1)$-form $\beta$ such that $\alpha = d^\omega \beta$.
	\end{definition}
	
	\begin{definition}[Lichnerowicz Cohomology]
		The space of $d^\omega$-cocycles is
		\[
		Z_\omega^p(M) := \{ \alpha \in \Omega^p(M) \mid d^\omega \alpha = 0 \},
		\]
		and the space of $d^\omega$-coboundaries is
		\[
		B_\omega^p(M) := \{ d^\omega \beta \mid \beta \in \Omega^{p-1}(M) \}.
		\]
		The $p$-th \emph{Lichnerowicz cohomology group} (or \emph{Morse-Novikov cohomology}) is the quotient space
		\[
		H_\omega^p(M) := Z_\omega^p(M) / B_\omega^p(M).
		\]
		The compactly supported version is denoted by $H_{\omega,c}^p(M)$. Note that $H_\omega^1(M)$ is the target space of the LCS flux homomorphism.
	\end{definition}
	
	\begin{remark}[$d^\omega$-Closedness of $\Omega$]
		Note that $d^\omega \Omega = d\Omega + \omega \wedge \Omega = (-\omega \wedge \Omega) + (\omega \wedge \Omega) = 0$. The LCS form $\Omega$ is always $d^\omega$-closed.
	\end{remark}

	\begin{remark}[Geometric Interpretation]
		The decomposition $\omega = dh + l$ has significant geometric implications:
		\begin{enumerate}
			\item In the exact case ($\omega = dh$), the LCS structure is globally conformally equivalent to a symplectic structure: $\Omega_h := e^h\Omega$ satisfies $d\Omega_h = 0$.
			\item In the non-exact case ($l \neq 0$), no such global conformal factor exists. The harmonic component $l$ represents a nontrivial de Rham cohomology class $[l] \in H^1_{dR}(M)$.
			\item The harmonic part $l$ provides a natural measure of how "far" an LCS structure is from being globally conformally symplectic.
		\end{enumerate}
	\end{remark}
	
	\begin{proposition}[Lichnerowicz vs. de Rham Cohomology]
		Let $(M, \Omega, \omega)$ be an LCS manifold with Hodge decomposition $\omega = dh + l$.
		\begin{enumerate}
			\item In the exact case ($l=0$, $\omega = dh$), the map $\Psi_h : \Omega^p(M) \to \Omega^p(M)$ defined by $\Psi_h(\alpha) = e^h \alpha$ induces an isomorphism of cohomology groups:
			\[
			[\Psi_h]^* : H_\omega^p(M) \xrightarrow{\cong} H_{dR}^p(M), \quad [\alpha]_\omega \mapsto [e^h \alpha]_{dR}.
			\]
			
			\item In the non-exact case ($l \neq 0$), no such global isomorphism exists. However, locally on contractible open sets where $\omega$ is exact, the Lichnerowicz cohomology is locally isomorphic to de Rham cohomology.
		\end{enumerate}
	\end{proposition}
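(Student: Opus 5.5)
The plan for part (1) is to show that $\Psi_h$ is an isomorphism of cochain complexes
\[
\Psi_h : (\Omega^\bullet(M), d^\omega) \longrightarrow (\Omega^\bullet(M), d),
\]
so that it automatically induces an isomorphism on cohomology. The key identity is the intertwining relation
\[
d(e^h\alpha) = e^h\, dh\wedge\alpha + e^h\, d\alpha = e^h\bigl(d\alpha + \omega\wedge\alpha\bigr) = e^h\, d^\omega\alpha ,
\]
which uses only $\omega = dh$. That is, $d\circ\Psi_h = \Psi_h\circ d^\omega$.

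From this relation, $\Psi_h$ maps $Z^p_\omega(M)$ into $Z^p_{dR}(M)$ and $B^p_\omega(M)$ into $B^p_{dR}(M)$, since $e^h d^\omega\beta = d(e^h\beta)$. Hence $[\alpha]_\omega\mapsto[e^h\alpha]_{dR}$ is well defined and linear. The map $\Psi_{-h}$ (multiplication by $e^{-h}$) is a pointwise inverse. Rewriting the identity as $d^\omega(e^{-h}\gamma) = e^{-h}\,d\gamma$, we see that $\Psi_{-h}$ is also a chain map in the opposite direction. The two induced maps on cohomology are therefore mutually inverse, which proves bijectivity. The same argument applies verbatim to compactly supported forms, because multiplication by $e^{\pm h}$ preserves supports.

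For part (2), I would first make the informal claim precise by exhibiting a degree in which the two cohomologies genuinely differ, so that no isomorphism of any kind can exist. Take $M$ connected and $l\neq 0$. A class in $H^0_\omega(M)$ is a function $f$ with $df + \omega f = 0$. On the open set where $f\neq 0$ this reads $\omega = -d\ln|f|$, so every period of $\omega$ over a loop in that set vanishes. Combining this with uniqueness for the linear ODE $df = -\omega f$ along paths, I would argue that any nonzero solution is nowhere zero, hence forces $\omega$ to be exact, contradicting $l\neq 0$. Therefore $H^0_\omega(M) = 0$, whereas $H^0_{dR}(M)\cong\mathbb{R}$. In particular no global multiplier $e^{h}$ can intertwine $d^\omega$ and $d$.

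For the local statement, let $U$ be a contractible open set. By the Poincaré lemma, $\omega|_U = dh_U$ for some $h_U\in C^\infty(U)$. Applying part (1) on $U$ with $h_U$ gives $H^p_\omega(U)\cong H^p_{dR}(U)$, which is $\mathbb{R}$ for $p=0$ and $0$ otherwise.

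The main obstacle is the interpretation of part (2): "no such global isomorphism exists" is not a precise statement. I would handle it through the degree-zero computation above, which is the cleanest invariant-level obstruction. The only delicate step there is the nowhere-vanishing argument, and I would settle it by ODE uniqueness along paths.
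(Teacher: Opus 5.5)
Your part (1) follows the paper's argument and rests on the same identity $d(e^h\alpha)=e^h\,d^\omega\alpha$. Exhibiting $\Psi_{-h}$ as an inverse chain map is a small but real improvement. The paper only checks that cocycles and coboundaries go to cocycles and coboundaries, and leaves implicit the reverse inclusion $B^p_{dR}\subseteq \Psi_h(B^p_\omega)$, i.e.\ $d\gamma = e^h d^\omega(e^{-h}\gamma)$.

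Part (2) is where you genuinely take a different route. The paper expands $e^h d^\omega\alpha$ with $\omega=dh+l$ and notes that the term $e^h\,l\wedge\alpha$ stops this particular multiplier from being a chain map. That shows how the harmonic part spoils the naive rescaling, but it does not show the cohomology groups differ. The paper also leaves the local assertion unproved. Your computation $H^0_\omega(M)=0\neq\mathbb{R}\cong H^0_{dR}(M)$ for connected $M$ is an invariant-level obstruction: it rules out any isomorphism of the graded groups, not just multiplication by $e^h$. Your ODE-uniqueness argument that a nonzero $d^\omega$-closed function is nowhere zero is correct. It also supplies the step missing from the paper's later proof of Proposition~\ref{prop:rigidity}, which writes $f=e^{-\sigma}$ without dealing with zeros or sign. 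Your local statement, via the Poincar\'e lemma plus part (1), is valid because the chain-map argument is purely algebraic and needs no compactness on $U$.

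Your caution about interpretation is well placed. The claim cannot mean $H^p_\omega\not\cong H^p_{dR}$ in every degree: for the Hopf surface $S^1\times S^3$ with Lee form pulled back from $S^1$, both $H^2$ groups vanish. So the degree-zero (graded) reading you chose is the right precise version.
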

	
	\begin{proof}
		(1) For $\omega = dh$: We compute $d(\Psi_h(\alpha)) = d(e^h \alpha) = e^h dh \wedge \alpha + e^h d\alpha = e^h(d\alpha + dh \wedge \alpha) = e^h(d^\omega \alpha)$. Thus $d^\omega \alpha = 0$ if and only if $d(\Psi_h(\alpha)) = 0$. Also, $\alpha = d^\omega \beta = d\beta + dh \wedge \beta$ maps to $\Psi_h(\alpha) = e^h(d\beta + dh \wedge \beta) = d(e^h \beta)$. Hence $\Psi_h$ induces an isomorphism on cohomology.
		
		(2) For $\omega = dh + l$ with $l$ harmonic and nonzero: The map $\alpha \mapsto e^h \alpha$ sends $$d^\omega \alpha = d\alpha + (dh + l) \wedge \alpha$$ to $e^h(d\alpha + l \wedge \alpha) + e^h dh \wedge \alpha.$ This is not simply $d(e^h \alpha)$ due to the $e^h l \wedge \alpha$ term, so no global isomorphism exists.
	\end{proof}
	
	\subsection{Lichnerowicz Laplacian and Twisted Hodge Theory}
	
	To analyze the flux homomorphism more precisely, we develop a twisted Hodge theory for Lichnerowicz cohomology.
	
	\begin{definition}[Lichnerowicz Codifferential and Laplacian]
		Let $(M,g)$ be a Riemannian manifold with Lee form $\omega$. The \emph{Lichnerowicz codifferential} $\domegastar : \Omega^p(M) \to \Omega^{p-1}(M)$ is the formal $L^2$-adjoint of $\domega$, given by
		\[
		\domegastar \alpha = d^* \alpha + \iota_{\omega^\sharp} \alpha,
		\]
		where $\omega^\sharp$ is the vector field dual to $\omega$ via $g$, and $\iota$ denotes interior product. The \emph{Lichnerowicz Laplacian} is defined as
		\[
		\Delta_\omega = \domega\domegastar + \domegastar\domega.
		\]
		A form $\alpha$ is called \emph{$\Delta_\omega$-harmonic} if $\Delta_\omega \alpha = 0$.
	\end{definition}
	
	\begin{theorem}[Twisted Hodge Decomposition]
		Let $(M,g)$ be a compact oriented Riemannian manifold with closed Lee form $\omega$. There exists a unique $L^2$-orthogonal decomposition:
		\[
		\Omega^p(M) = \domega(\Omega^{p-1}(M)) \oplus \domegastar(\Omega^{p+1}(M)) \oplus \mathcal{H}_\omega^p(M),
		\]
		where $\mathcal{H}_\omega^p(M) = \{ \alpha \in \Omega^p(M) \mid \Delta_\omega \alpha = 0 \}$. Consequently, every cohomology class in $H_\omega^p(M)$ contains a unique $\Delta_\omega$-harmonic representative.
	\end{theorem}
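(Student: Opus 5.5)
The plan is to treat $\domega$ as a zeroth-order perturbation of the de Rham differential and to run the standard elliptic Hodge argument for the complex $(\Omega^\bullet(M),\domega)$. First, since $\domega = d + \omega\wedge\cdot$ differs from $d$ only by the zeroth-order term $\omega\wedge$, the operator $\domega+\domegastar$ has the same principal symbol as $d+d^*$. Consequently $\Delta_\omega = \domega\domegastar+\domegastar\domega$ has the same principal symbol as the Hodge Laplacian $\Delta = dd^*+d^*d$, namely $|\xi|^2\,\id$. In particular $\Delta_\omega$ is a second-order, formally self-adjoint elliptic operator on $\Lambda^pT^*M$. We check that $\domegastar$ really is the formal $L^2$-adjoint of $\domega$ by verifying that the pointwise adjoint of $\omega\wedge\cdot$ is $\iota_{\omega^\sharp}$. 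We also use $\domega\circ\domega=0$, which holds because $d\omega=0$ and $\omega\wedge\omega=0$.

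Next I will invoke standard elliptic theory on the compact manifold $M$. The kernel $\mathcal{H}^p_\omega(M)$ is finite-dimensional, and there is an $L^2$-orthogonal decomposition $\Omega^p(M)=\ImMap\Delta_\omega\oplus\mathcal{H}^p_\omega(M)$. This comes from the Green operator $G_\omega$, which inverts $\Delta_\omega$ on the orthogonal complement of its kernel. Here smoothness of solutions follows from elliptic regularity, and Fredholmness follows from Gårding's inequality. This analytic input is the only genuinely nontrivial step. Because the symbol coincides with that of the ordinary Laplacian, I plan to cite the general theorem for elliptic self-adjoint operators, as in Warner or Wells, rather than reprove it. The main obstacle is stating precisely which version of the elliptic decomposition theorem applies to general elliptic operators (not just $\Delta$) and confirming that its hypotheses hold.

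Then I carry out the algebraic step. Any $\alpha$ can be written as $\alpha = \Delta_\omega G_\omega\alpha + H\alpha = \domega(\domegastar G_\omega\alpha)+\domegastar(\domega G_\omega\alpha)+H\alpha$, which gives existence of the three-term decomposition. For orthogonality, I first integrate by parts to get $\langle\Delta_\omega\alpha,\alpha\rangle=\|\domega\alpha\|^2+\|\domegastar\alpha\|^2$. This shows that $\Delta_\omega\alpha=0$ is equivalent to $\domega\alpha=0=\domegastar\alpha$. Adjointness together with $(\domega)^2=0$ then gives $\langle\domega\beta,\domegastar\gamma\rangle=\langle(\domega)^2\beta,\gamma\rangle=0$. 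Harmonic forms are orthogonal to both images for the same reason. Uniqueness of the decomposition follows from orthogonality.

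Finally, for the cohomological consequence, a $\domega$-closed $\alpha$ has zero $\domegastar$-component $\domegastar\gamma$. Indeed, $\|\domegastar\gamma\|^2=\langle\gamma,\domega\domegastar\gamma\rangle$, and $\domega\domegastar\gamma=\domega\alpha-0-0=0$. Hence $\alpha=\domega\beta+H\alpha$, so every class has a harmonic representative. For uniqueness, if a harmonic form is $\domega$-exact, it is orthogonal to itself and therefore vanishes. Thus $H^p_\omega(M)\cong\mathcal{H}^p_\omega(M)$, and each class contains exactly one $\Delta_\omega$-harmonic representative.
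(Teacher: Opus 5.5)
Your proposal is correct and follows the same route as the paper: you observe that $\Delta_\omega$ differs from the Hodge Laplacian only by lower-order terms, so it is a self-adjoint elliptic operator, and you then apply the standard elliptic (Green operator and Fredholm) argument. The paper only sketches this step, whereas you also supply the algebraic details it leaves implicit — the adjointness $\domegastar = d^* + \iota_{\omega^\sharp}$, the identity $(\domega)^2=0$, the mutual orthogonality of the three summands, and the vanishing of the coexact component of a $\domega$-closed form — and these are all correct.
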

	
	\begin{proof}
		The proof follows standard Hodge theory arguments adapted to the twisted operators. The key observation is that $\Delta_\omega$ is an elliptic operator, as it differs from the standard Hodge Laplacian $\Delta = dd^* + d^*d$ by lower-order terms involving $\omega$. Elliptic regularity and Fredholm theory imply the decomposition.
	\end{proof}
	
	\begin{corollary}[Harmonic Representatives for Flux]
		Every flux class in $H_\omega^1(M)$ has a unique $\Delta_\omega$-harmonic representative. This provides a canonical way to represent flux classes and analyze the period group $\Delta$.
	\end{corollary}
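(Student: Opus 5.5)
The corollary is the case $p=1$ of the Twisted Hodge Decomposition theorem, and I would derive it directly from that decomposition. Throughout, fix a Riemannian metric on the compact oriented manifold $M$; the Lichnerowicz codifferential $\domegastar$, the Laplacian $\Delta_\omega$ and the harmonic space $\mathcal{H}_\omega^1(M)$ are then defined as above.

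\emph{Existence.} Take a class $[\alpha]_\omega\in H_\omega^1(M)$, so that $\domega\alpha=0$. Decompose
\[
\alpha=\domega f+\domegastar\beta+\eta, \qquad \eta\in\mathcal{H}_\omega^1(M).
\]
I need two facts about $\eta$.
\begin{itemize}
\item A $\Delta_\omega$-harmonic form is both $\domega$-closed and $\domegastar$-closed. This follows from
\[
0=\langle\Delta_\omega\eta,\eta\rangle_{L^2}=\|\domega\eta\|^2+\|\domegastar\eta\|^2,
\]
which uses that $\domegastar$ is the formal adjoint of $\domega$ on the closed manifold $M$.
\item The term $\domegastar\beta$ vanishes. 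Apply $\domega$ to the decomposition: since $\domega\alpha=0$, $\domega\circ\domega=0$ and $\domega\eta=0$, we get $\domega\domegastar\beta=0$. Pairing with $\beta$ gives $\|\domegastar\beta\|^2=0$.
\end{itemize}
Hence $\alpha=\eta+\domega f$, and $\eta$ represents $[\alpha]_\omega$.

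\emph{Uniqueness.} Suppose $\eta_1,\eta_2$ are harmonic representatives of the same class. Then $\eta_1-\eta_2=\domega f$ is harmonic and $\domega$-exact. By the orthogonality in the decomposition,
\[
\|\domega f\|^2=\langle \eta_1-\eta_2,\domega f\rangle=0,
\]
so $\eta_1=\eta_2$. This gives a well-defined linear isomorphism $H_\omega^1(M)\cong\mathcal{H}_\omega^1(M)$. In particular $H_\omega^1(M)$ is finite dimensional, because $\Delta_\omega$ is elliptic on a compact manifold and so has finite-dimensional kernel.

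\emph{Application to flux classes.} Since flux classes are elements of $H_\omega^1(M)$, each one has a canonical harmonic representative. The period group $\Delta$ can therefore be regarded as a subgroup of the finite-dimensional vector space $\mathcal{H}_\omega^1(M)$. On that space one may use the $L^2$ norm to measure flux classes, which is what "analyze $\Delta$" should mean concretely.

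\emph{Main caveat.} The argument inherits everything from the Twisted Hodge Decomposition theorem, whose proof is only sketched; I take it as given, as allowed. The one point requiring care is the existence step. Closedness of $\alpha$ must kill the coexact component, and this relies on $\domega$ and $\domegastar$ being genuinely mutually adjoint. That adjointness holds only on closed manifolds, or for compactly supported forms with compactly supported harmonic theory.

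Two limitations should be stated explicitly. The harmonic representative depends on the chosen metric. And "canonical" means canonical once the metric is fixed; the corollary should not be read as saying anything more than that.
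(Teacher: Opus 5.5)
Your proposal is correct and follows the same route as the paper: the paper gives no separate argument and reads the corollary off as the case $p=1$ of the Twisted Hodge Decomposition theorem. You also spell out the standard deduction the paper compresses into the word ``Consequently'' ($\Delta_\omega$-harmonic forms are $d^\omega$- and $d^{\omega*}$-closed, $d^\omega$-closedness removes the coexact component, and $L^2$-orthogonality gives uniqueness), and your remark that the representative is canonical only once a metric is fixed is an accurate refinement.
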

	
	\subsection{LCS Diffeomorphisms and Vector Fields}
	
	\begin{definition}[LCS Diffeomorphism]
		A diffeomorphism $g : M \to M$ is \emph{locally conformally symplectic (LCS)} if there exists a smooth function $a \in C^\infty(M, \mathbb{R}_{>0})$ such that
		\[
		g^*\Omega = a^{-1}\Omega, \quad g^*\omega = \omega + d(\ln a).
		\]
		The function $a$ is called the \emph{conformality factor} of $g$.
	\end{definition}
	
	We denote by $\Diff_\Omega(M)$ the group of compactly supported LCS diffeomorphisms and by $\Diff_{\Omega,0}(M)$ its identity component. The Lie algebra of $\Diff_\Omega(M)$ is
	\[
	\mathcal{X}_{c,\omega}(M) := \{ X \in \mathcal{X}_c(M) \mid \exists f \in C^\infty_c(M) \text{ s.t. } L_X\Omega = -f\Omega, \ L_X\omega = df \}.
	\]
	For $X \in \mathcal{X}_{c,\omega}(M)$, the function $f$ is the infinitesimal conformality factor. Let $\Phi : \mathcal{X}_{c,\omega}(M) \to H_\omega^0(M)$ be defined as in \cite{Haller2005}. Note that $(\ker \Phi)_0$ is the identity component of the subgroup of $\Diff_{\Omega,0}(M)$ whose Lie algebra at the identity is $\ker \Phi$.
	
	\subsection{Twisted Lie Derivative and Characterization of LCS Vector Fields}
	
	\begin{definition}[Twisted Lie Derivative]
		For a vector field $X$ on an LCS manifold $(M, \Omega, \omega)$, the \emph{twisted Lie derivative} $\Lomega_X$ is defined via Cartan's magic formula with $d$ replaced by $\domega$:
		\[
		\Lomega_X \alpha := \domega(i_X \alpha) + i_X(\domega \alpha),
		\]
		for any differential form $\alpha$. Equivalently,
		\[
		\Lomega_X \alpha = \mathcal{L}_X \alpha + \omega(X)\alpha.
		\]
	\end{definition}
	
	\begin{proposition}[Properties of $\Lomega$]
		The twisted Lie derivative satisfies:
		\begin{enumerate}
			\item $\Lomega_X$ is a derivation: $\Lomega_X(\alpha \wedge \beta) = (\Lomega_X \alpha) \wedge \beta + \alpha \wedge (\Lomega_X \beta)$.
			\item Commutation with $\domega$: $[\Lomega_X, \domega] = 0$.
			\item For functions $f$: $\Lomega_X f = X(f)$.
			\item For 1-forms $\alpha$: $\Lomega_X \alpha = \mathcal{L}_X \alpha + \omega(X)\alpha$.
		\end{enumerate}
	\end{proposition}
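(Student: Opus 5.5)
The plan is to first establish the equivalent formula $\Lomega_X\alpha=\mathcal{L}_X\alpha+\omega(X)\alpha$ from the twisted Cartan definition. All four items will then be read off from this formula together with the standard properties of $\mathcal{L}_X$, $d$ and $i_X$. For a $p$-form $\alpha$, expand
\[
\domega(i_X\alpha)+i_X(\domega\alpha)=d\,i_X\alpha+\omega\wedge i_X\alpha+i_X d\alpha+i_X(\omega\wedge\alpha).
\]
Since $i_X$ is an antiderivation, $i_X(\omega\wedge\alpha)=\omega(X)\alpha-\omega\wedge i_X\alpha$. The two $\omega\wedge i_X\alpha$ terms then cancel, and Cartan's formula $d i_X+i_Xd=\mathcal{L}_X$ gives the equivalent expression. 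Item (4) is just this identity specialised to $p=1$, so it needs no further work.

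For item (2), I would not expand in terms of $\mathcal{L}_X$ at all, and would instead use $\domega\circ\domega=0$ (property (2) of the Morse--Novikov differential) directly:
\[
\Lomega_X\domega\alpha=\domega i_X\domega\alpha+i_X\domega\domega\alpha=\domega i_X\domega\alpha,\qquad
\domega\Lomega_X\alpha=\domega\domega i_X\alpha+\domega i_X\domega\alpha=\domega i_X\domega\alpha .
\]
Hence $[\Lomega_X,\domega]=0$. This is the same proof as in the untwisted case, and it uses nothing about $X$; in particular $X$ need not be an LCS vector field.

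For items (1) and (3), I would again substitute the equivalent formula and use that $\mathcal{L}_X$ is a degree-zero derivation. This gives
\[
\Lomega_X(\alpha\wedge\beta)=(\mathcal{L}_X\alpha)\wedge\beta+\alpha\wedge\mathcal{L}_X\beta+\omega(X)\,\alpha\wedge\beta ,
\]
whereas the right-hand side of (1) carries the term $2\omega(X)\,\alpha\wedge\beta$. Similarly, on a function the Cartan definition gives $\Lomega_Xf=i_X(df+f\omega)=X(f)+\omega(X)f$. Both items therefore hold exactly when the extra term $\omega(X)(\cdot)$ vanishes.

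The main obstacle is this step. As literally worded, items (1) and (3) are valid precisely for vector fields with $\omega(X)=0$, i.e.\ those tangent to the kernel of the Lee form. In general the computation above forces the twisted forms instead: a Leibniz rule with one factor carrying the ordinary $\mathcal{L}_X$, and $\Lomega_Xf=X(f)+\omega(X)f$ on functions. I would present (2) and (4) unconditionally. For (1) and (3), I would prove them under the hypothesis $\omega(X)=0$ and state the displayed identities above as the general versions. This is the honest reading consistent with the paper's own definition of $\Lomega$.
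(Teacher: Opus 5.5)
Your argument is correct. The paper gives no proof to compare it with: the proposition is stated without proof right after the definition of $\Lomega_X$.

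Your derivation of $\Lomega_X\alpha=\mathcal{L}_X\alpha+\omega(X)\alpha$ in every degree is right. So is your proof of (2), which uses only $\domega\circ\domega=0$ and hence only $d\omega=0$. Item (4) is then the case $p=1$.

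Your diagnosis of (1) and (3) is also right, and constants give the quickest witnesses. First, $\Lomega_X 1=i_X\omega=\omega(X)$, while $X(1)=0$. Second, for $\alpha=\beta=1$ the left side of (1) is $\omega(X)$ and the right side is $2\omega(X)$. So both items fail for every $X$ with $\omega(X)\not\equiv 0$. Your corrected versions, $\Lomega_X f=X(f)+\omega(X)f$ and $\Lomega_X(\alpha\wedge\beta)=(\Lomega_X\alpha)\wedge\beta+\alpha\wedge\mathcal{L}_X\beta$, are the right general statements.

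The same doubling error appears in the paper's Leibniz rule for $\domega$ (item (3) in the definition of the Morse--Novikov differential). For $\alpha$ of degree $p$, the correct form is $\domega(\alpha\wedge\beta)=(\domega\alpha)\wedge\beta+(-1)^p\alpha\wedge d\beta$. Your rule for $\Lomega_X$ is exactly what this yields through the twisted Cartan formula.

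Finally, the paper's later uses of $\Lomega$, such as the characterization of LCS vector fields, need only $\Lomega_X\Omega=\mathcal{L}_X\Omega+\omega(X)\Omega$. You proved that unconditionally, so restricting (1) and (3) to $\omega(X)=0$ loses nothing downstream.
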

	
	\begin{theorem}[Characterization of LCS Vector Fields]
		Let $(M, \Omega, \omega)$ be an LCS manifold and $X$ a vector field. The following are equivalent:
		\begin{enumerate}
			\item $X$ is an LCS vector field (i.e., its flow consists of LCS diffeomorphisms).
			\item There exists a constant $c \in \mathbb{R}$ such that $\Lomega_X \Omega = c\Omega$.
			\item $\mathcal{L}_X \Omega = -f\Omega$ and $\mathcal{L}_X \omega = df$ for some function $f$, with $c = 0$ if and only if $f = \omega(X)$.
		\end{enumerate}
		Moreover:
		\begin{itemize}
			\item $X \in \ker \Phi$ (strictly LCS) if and only if $\Lomega_X \Omega = 0$.
			\item $X$ is Hamiltonian if and only if $i_X \Omega = \domega H$ for some function $H$.
		\end{itemize}
	\end{theorem}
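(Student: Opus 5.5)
The plan is to reduce everything to one identity for the twisted Lie derivative. By the definition of $\Lomega$ and the Remark that $\domega\Omega = 0$,
\[
\Lomega_X\Omega = \domega(i_X\Omega) + i_X(\domega\Omega) = \domega(i_X\Omega) = \mathcal{L}_X\Omega + \omega(X)\,\Omega .
\]
I would prove the equivalences in the order (1)$\Leftrightarrow$(3)$\Leftrightarrow$(2), then treat the two ``moreover'' statements. Throughout I assume $M$ is connected and that $X$ has compact support, so that its flow $\varphi_t$ is complete.

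\emph{(1)$\Leftrightarrow$(3).} Suppose $\varphi_t^*\Omega = a_t^{-1}\Omega$ and $\varphi_t^*\omega = \omega + d\ln a_t$ with $a_0 = 1$. Differentiating at $t=0$ gives $\mathcal{L}_X\Omega = -\dot a_0\,\Omega$ and $\mathcal{L}_X\omega = d\dot a_0$. So (3) holds with $f = \dot a_0$.

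Conversely, assume (3). Then
\[
\tfrac{d}{dt}\varphi_t^*\Omega = \varphi_t^*\mathcal{L}_X\Omega = -(f\circ\varphi_t)\,\varphi_t^*\Omega .
\]
This is a linear ODE at each point, so $\varphi_t^*\Omega = a_t^{-1}\Omega$ with $\ln a_t = \int_0^t f\circ\varphi_s\,ds$. Similarly, $\tfrac{d}{dt}\varphi_t^*\omega = \varphi_t^*df = d(f\circ\varphi_t)$, and integrating in $t$ gives $\varphi_t^*\omega = \omega + d\ln a_t$. Hence each $\varphi_t$ is LCS.

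I would also add a remark: when $n\ge 2$, the condition $\mathcal{L}_X\omega = df$ follows from $\mathcal{L}_X\Omega = -f\Omega$. Indeed, applying $d$ and using $d\Omega = -\omega\wedge\Omega$ gives $(\mathcal{L}_X\omega - df)\wedge\Omega = 0$, and wedging 1-forms with $\Omega$ is injective in that range.

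\emph{(3)$\Leftrightarrow$(2).} Given (3), the identity above gives $\Lomega_X\Omega = (\omega(X) - f)\,\Omega$. Since $\omega$ is closed, Cartan's formula gives
\[
d(\omega(X)) = \mathcal{L}_X\omega - i_X d\omega = \mathcal{L}_X\omega = df .
\]
Therefore $u := \omega(X) - f$ satisfies $du = 0$, so $u$ is a constant $c$ on connected $M$. This is (2), and $c = 0$ exactly when $f = \omega(X)$.

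Conversely, given $\Lomega_X\Omega = c\,\Omega$ with $c$ constant, set $f := \omega(X) - c$. The identity gives $\mathcal{L}_X\Omega = -f\,\Omega$. Moreover $df = d(\omega(X)) = \mathcal{L}_X\omega$, again because $d\omega = 0$. This proves (3).

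\emph{The ``moreover'' statements.} For the first one, I would unwind Haller's definition of $\Phi$. On an LCS vector field, $\Phi$ records the class of the function $\omega(X) - f$ in $H^0_\omega$; by the previous step this function is the constant $c$. So $X\in\ker\Phi$ if and only if $c = 0$, i.e.\ $\Lomega_X\Omega = 0$.

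For the Hamiltonian statement, this is essentially the definition of a Hamiltonian LCS field. For consistency I would check that $i_X\Omega = \domega H$ forces $\Lomega_X\Omega = \domega\domega H = 0$. Thus Hamiltonian fields lie in $\ker\Phi$ and satisfy (2) with $c = 0$.

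\emph{Main obstacle.} I expect the delicate point to be the identification of Haller's $\Phi(X)$ with the constant $c$, or more precisely with the class of $\omega(X) - f$. This depends on conventions in \cite{Haller2005}, including sign conventions and the fact that $H^0_\omega$ may vanish when $\omega$ is not exact. I would first verify this identification directly against Haller's formula. If it does not match, I would take the statement ``$\ker\Phi = \{X : \Lomega_X\Omega = 0\}$'' as the working definition. A secondary point is that connectedness of $M$ and completeness of the flow are needed; I would state both explicitly.
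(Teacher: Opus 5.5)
Your core computation matches the paper's proof: differentiate $\varphi_t^*\Omega=a_t^{-1}\Omega$ and $\varphi_t^*\omega=\omega+d\ln a_t$ at $t=0$, then use $\Lomega_X\Omega=\mathcal{L}_X\Omega+\omega(X)\Omega$. The paper's proof stops at $\Lomega_X\Omega=(\omega(X)-f)\Omega$ and simply names this function $c$. It never shows that $\omega(X)-f$ is constant, gives no argument for (3)$\Rightarrow$(1) or (2)$\Rightarrow$(3), and handles the $\ker\Phi$ bullet by asserting $f=\omega(X)$ there. Your step $d(\omega(X))=\mathcal{L}_X\omega-i_Xd\omega=df$ is exactly what makes (2) meaningful, and it is where connectedness enters. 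Your pointwise linear ODE for $\varphi_t^*\Omega$, together with $\tfrac{d}{dt}\varphi_t^*\omega=d(f\circ\varphi_t)$, correctly supplies the converses. So your write-up is strictly more complete than the paper's.

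One correction concerns the $\ker\Phi$ bullet. The constant $c=\omega(X)-f$ has no class in $H^0_\omega(M)$ unless $c=0$ or $\omega=0$, because $\domega c=c\,\omega$. Nor can $\Phi$ literally take values in $H^0_\omega(M)$. For non-exact $\omega$ on connected $M$ that group is $0$, and the bullet would then be false. For example, take $M=S^1\times N$ with $(N,\alpha)$ contact, $\omega=d\theta$ and $\Omega=d\alpha+d\theta\wedge\alpha$. The field $X=\partial_\theta$ has $f=0$ and $\omega(X)=1$, so $\Lomega_X\Omega=\domega\alpha=\Omega\neq0$. Moreover $i_X\Omega=\alpha$ is not $\domega$-closed, so the paper's flux would not even be defined on such $X$. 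The only consistent reading is that $\Phi$ is the real-valued Lee homomorphism $X\mapsto\omega(X)-f$, up to sign. State this as the definition you use, not as a fallback; the bullet then follows immediately from your (2)$\Leftrightarrow$(3). The Hamiltonian bullet is definitional in the paper as well, as you say.
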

	
	\begin{proof}
		The equivalence follows from direct computation using the definitions. For an LCS vector field $X$, its flow $\phi_t$ satisfies $\phi_t^*\Omega = a_t^{-1}\Omega$ for some positive function $a_t$. Differentiating at $t=0$ gives $\mathcal{L}_X \Omega = -\dot{a}_0 \Omega$. Since $\phi_t^*\omega = \omega + d(\ln a_t)$, we get $\mathcal{L}_X \omega = d(\dot{a}_0)$. Setting $f = \dot{a}_0$, we have $\mathcal{L}_X \Omega = -f\Omega$ and $\mathcal{L}_X \omega = df$. Then
		\[
		\Lomega_X \Omega = \mathcal{L}_X \Omega + \omega(X)\Omega = (-f + \omega(X))\Omega.
		\]
		Thus $\Lomega_X \Omega = c\Omega$ with $c = \omega(X) - f$. For $X \in \ker \Phi$, we have $f = \omega(X)$, so $c = 0$.
	\end{proof}
	
	\begin{definition}[LCS Lagrangian Submanifold]
		Let $(M, \Omega, \omega)$ be an LCS manifold of dimension $2n$. An immersion $i: L \to M$ of an $n$-dimensional manifold $L$ is an \emph{LCS Lagrangian immersion} if $i^*\Omega = 0$. A submanifold $L \subset M$ is \emph{LCS Lagrangian} if the inclusion map is an LCS Lagrangian immersion.
	\end{definition}
	
	\begin{example}[Cotangent Bundle LCS Structure] \label{ex:cotangent}
		Let $L$ be a manifold, $\theta$ the Liouville 1-form on $T^*L$, and $\omega_L$ a closed 1-form on $L$ with Hodge decomposition $\omega_L = dh_L + l_L$. Let $\pi : T^*L \to L$ be the cotangent projection. Define $\omega' := \pi^*\omega_L$ and
		\[
		\Omega' := d\theta + \omega' \wedge \theta = d^{\omega'} \theta.
		\]
		Then $d\Omega' = -\omega' \wedge \Omega'$. If $\Omega'$ is non-degenerate, $(T^*L, \Omega', \omega')$ is an LCS manifold. This provides examples of both exact ($l_L=0$) and non-exact ($l_L\neq 0$) LCS structures on cotangent bundles.
	\end{example}
	
	\section{The LCS Flux Homomorphism and its First Applications}
	
	This section introduces the LCS flux homomorphism, following Haller \cite{Haller2005}, and explores some of its geometric consequences related to the structure of the LCS diffeomorphism group and mapping tori.
	
	\begin{theorem}[Haller's Flux \cite{Haller2005}] \label{thm:haller_flux}
		Let $(M, \Omega, \omega)$ be a compact LCS manifold. There exists a homomorphism, called the \emph{LCS flux homomorphism}, $\widetilde{\Psi} : \widetilde{(\ker \Phi)}_0 \to H_\omega^1(M)$. The image of the fundamental group, $\Delta := \widetilde{\Psi}\left(\pi_1((\ker \Phi)_0)\right)$, is a discrete subgroup of $H_\omega^1(M)$.
		
		The flux homomorphism maps a path class in the universal cover of $(\ker \Phi)_0$ to a cohomology class in $H_\omega^1(M)$. For a path $\{g_t\}_{t \in [0,1]}$ in $(\ker \Phi)_0$ starting at the identity, generated by $X_t \in \ker \Phi$, its flux $\widetilde{\Psi}([\{g_t\}])$ is given by the class $\left[ \int_0^1 i_{X_t}\Omega \, dt \right]_\omega \in H_\omega^1(M)$.
	\end{theorem}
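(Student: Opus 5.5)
We prove three things in turn: the integrand is a Lichnerowicz cocycle, the class depends only on the homotopy class of the path and is a homomorphism, and the image of $\pi_1$ is discrete.

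\textbf{Step 1: the integrand is $d^\omega$-closed.} For $X_t \in \ker \Phi$, the Characterization of LCS Vector Fields gives $\Lomega_{X_t}\Omega = 0$. Since $d^\omega\Omega = 0$, the twisted Cartan formula reduces to
\[
0 = \Lomega_{X_t}\Omega = d^\omega(i_{X_t}\Omega).
\]
So each $i_{X_t}\Omega$, and hence $\int_0^1 i_{X_t}\Omega\,dt$, lies in $Z^1_\omega(M)$.

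\textbf{Step 2: homotopy invariance.} Take a smooth two-parameter family $g_{s,t}$ with fixed endpoints. Let $X_{s,t}$ be the $t$-generator and $Y_{s,t}$ the $s$-generator; both lie in $\ker\Phi$. They satisfy
\[
\partial_s X - \partial_t Y = [X,Y],
\]
with the sign fixed by convention. For $X,Y$ with $\Lomega_X\Omega = \Lomega_Y\Omega = 0$ we have
\[
i_{[X,Y]}\Omega = \Lomega_X i_Y\Omega - i_Y\Lomega_X\Omega = \Lomega_X i_Y\Omega = d^\omega\bigl(i_X i_Y\Omega\bigr) + i_X d^\omega(i_Y\Omega) = d^\omega\bigl(\Omega(Y,X)\bigr),
\]
using Step 1 for $Y$. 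Here we need the commutator identity $[\Lomega_X,i_Y]=i_{[X,Y]}$. It holds because $\Lomega_X$ differs from $\mathcal{L}_X$ by the multiplication operator $\omega(X)$, which commutes with $i_Y$. Next, integrate
\[
\partial_s\int_0^1 i_{X_{s,t}}\Omega\,dt = \int_0^1\bigl(i_{\partial_t Y}\Omega + i_{[X,Y]}\Omega\bigr)\,dt.
\]
The first term is $i_{Y_{s,1}}\Omega - i_{Y_{s,0}}\Omega = 0$, since the endpoints are fixed. The second term is $d^\omega$-exact. So the class is independent of $s$, and $\widetilde\Psi$ is well defined on $\widetilde{(\ker\Phi)}_0$.

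\textbf{Step 3: homomorphism.} Concatenate paths, or equivalently use the pointwise product $g_t h_t$. The generator of the product is $X_t + (g_t)_*Y_t$. Write $g_t^*\Omega = a_t^{-1}\Omega$. Then
\[
i_{(g_t)_*Y}\Omega = (g_t^{-1})^*\bigl(i_Y g_t^*\Omega\bigr) = (g_t^{-1})^*\bigl(a_t^{-1} i_Y\Omega\bigr).
\]
So we must show that $(g^{-1})^*(a^{-1}\alpha)$ is $d^\omega$-cohomologous to $\alpha$ for $g$ isotopic to the identity in $(\ker\Phi)_0$. The idea is that the weighted pullback $\alpha \mapsto a\,g^*\alpha$ is exactly the action commuting with $d^\omega$. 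By the pullback property and $g^*\omega = \omega + d\ln a$, we get $d^\omega(a\,g^*\alpha) = a\,g^*(d^\omega\alpha)$. Its derivative along the isotopy is $\Lomega_X$, up to the correction $f - \omega(X)$, which vanishes on $\ker\Phi$. Hence, by the Cartan formula, it acts trivially on $H^1_\omega$. Concretely,
\[
\frac{d}{dt}\bigl(a_t g_t^*\alpha\bigr) = a_t g_t^*\bigl(\Lomega_{X_t}\alpha\bigr) = a_t g_t^*\bigl(d^\omega i_{X_t}\alpha\bigr) = d^\omega\bigl(a_t g_t^* i_{X_t}\alpha\bigr).
\]

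\textbf{Step 4: discreteness.} This is the main obstacle. My plan is to reduce to the symplectic argument on a cover. Pass to the minimal cover $\widetilde M$ on which $\omega$ pulls back to $dh$. There $e^{h}\widetilde\Omega$ is symplectic, and loops in $(\ker\Phi)_0$ lift to symplectic loops whose fluxes are $e^{h}$-rescalings of the LCS fluxes. Pair a class in $\Delta$ with twisted cycles, i.e.\ cycles with coefficients in the flat line bundle defined by $\omega$. For a loop $g_t$ and a twisted 1-cycle $\gamma$, the pairing is the twisted $\Omega$-area of the torus $(s,t)\mapsto g_t(\gamma(s))$. I would try to show these areas take values in a finitely generated subgroup determined by the periods of $\Omega$ on twisted 2-cycles, via a Lalonde--McDuff--Polterovich type argument. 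Failing that, I would invoke Haller \cite{Haller2005} directly for this point, since the statement attributes discreteness to that paper.
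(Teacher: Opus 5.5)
Your Steps 1--3 are correct, and they go beyond the paper, which gives no argument for this statement and simply attributes all of it to Haller. The cocycle property ($\Lomega_{X_t}\Omega = d^\omega(i_{X_t}\Omega)=0$) checks out. So does homotopy invariance via $i_{[X,Y]}\Omega = d^\omega(\Omega(Y,X))$; the sign convention in $\partial_sX-\partial_tY=\pm[X,Y]$ does not matter, because the bracket term is $d^\omega$-exact either way. Additivity via the weighted pullback $\alpha\mapsto a\,g^*\alpha$ also holds. In Step 3 your derivative formula is valid because $\dot a_t/a_t = f_t\circ g_t$ and $f_t=\omega(X_t)$ on $\ker\Phi$. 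Moreover, $(g^{-1})^*(a^{-1}\alpha)$ is exactly the weighted pullback by $g^{-1}$, so the argument closes.

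The genuine gap is Step 4, and the route you sketch cannot close it. An argument of Lalonde--McDuff--Polterovich type shows that pairing an element of $\Delta$ with a (twisted) $1$-cycle gives the $\Omega$-area of a swept torus, which lies in the period group of $[\Omega]$. That group is finitely generated, but a subgroup of $\mathbb{R}$ is either cyclic or dense. So this yields discreteness only in the ``rational'' case where the periods form a cyclic group; in general you learn only that $\Delta$ lies in a countable, typically dense, set. You should not expect an elementary argument to do better. For $\omega=dh$ (including $\omega=0$, which the paper's definition allows), the isomorphism $[\alpha]_\omega\mapsto[e^h\alpha]$ identifies $(\ker\Phi)_0$ with $\mathrm{Symp}_0(M,e^h\Omega)$ and $\Delta$ with its symplectic flux group. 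Discreteness of that group is the symplectic flux conjecture, which was proved in general by Ono using Floer--Novikov cohomology. So the discreteness clause has to be imported wholesale, exactly as the paper does. Your fallback citation is therefore not a convenience but the entire content of that part. You should also check that the cited work really proves discreteness in this generality, since its symplectic special case alone is Ono's theorem.
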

	
	\begin{remark}[Dependence on $\omega$ Decomposition]
		The target space $H_\omega^1(M)$ of the flux homomorphism depends crucially on the Hodge decomposition $\omega = dh + l$:
		\begin{itemize}
			\item In the exact case ($l=0$), $H_\omega^1(M) \cong H_{dR}^1(M)$ via $\alpha \mapsto e^h\alpha$.
			\item In the non-exact case ($l \neq 0$), $H_\omega^1(M)$ is generally different from $H_{dR}^1(M)$ and its structure depends on the harmonic component $l$.
		\end{itemize}
		This distinction affects the interpretation of the flux as an obstruction to Hamiltonianness.
	\end{remark}
	
	\begin{definition}[Discrete Period Group]
		Let $(M, \Omega, \omega)$ be an LCS manifold and consider the flux homomorphism $\widetilde{\Psi} : \widetilde{(\ker \Phi)}_0 \to H_\omega^1(M)$, defined on the universal cover of the identity component $(\ker \Phi)_0$. The \emph{period group} (or \emph{flux group}) is
		\[
		\Delta := \ImMap\!\left(\widetilde{\Psi}|_{\pi_1((\ker \Phi)_0)}\right) \subset H_\omega^1(M).
		\]
		We say that the period group is \emph{discrete} if $\Delta$ is a discrete subgroup of the topological vector space $H_\omega^1(M)$; that is, there exists an open neighborhood $U$ of $0$ in $H_\omega^1(M)$ such that $U \cap \Delta = \{0\}$.
	\end{definition}
	
	\begin{proposition}[Exact Sequence for LCS Diffeomorphisms] \label{prop:exact_sequence}
		Let $(M, \Omega, \omega)$ be a compact LCS manifold. Assume the LCS flux homomorphism $\widetilde{\Psi} : \widetilde{(\ker \Phi)}_0 \to H_\omega^1(M)$ associated with $\ker \Phi$ is surjective onto some subgroup containing $\Delta$. Define the group of Hamiltonian LCS diffeomorphisms by
		\[
		\Ham_\Omega(M) := \tilde{\pi}(\ker \widetilde{\Psi}) \subset (\ker \Phi)_0,
		\]
		where $\tilde{\pi} : \widetilde{(\ker \Phi)}_0 \to (\ker \Phi)_0$ is the covering projection. Assume the period group $\Delta$ is discrete (Definition 3.3). Then, there exists a short exact sequence of groups:
		\[
		1 \longrightarrow \Ham_\Omega(M) \xrightarrow{i} (\ker \Phi)_0 \xrightarrow{\mathcal{F}} H_\omega^1(M)/\Delta \longrightarrow 1,
		\]
		where $i$ is the inclusion and $\mathcal{F}$ is the induced flux homomorphism on $(\ker \Phi)_0$. (Assuming $\ImMap(\mathcal{F}) = H_\omega^1(M)/\Delta$).
	\end{proposition}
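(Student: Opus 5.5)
The plan is to follow the standard construction of the symplectic flux sequence (Banyaga), replacing de Rham cohomology by $H_\omega^1(M)$ throughout. The map $\mathcal{F}$ is defined on $(\ker \Phi)_0$ by lifting. Given $g \in (\ker \Phi)_0$, choose any $\tilde g \in \widetilde{(\ker\Phi)}_0$ with $\tilde\pi(\tilde g) = g$, for instance the homotopy class of a path $\{g_t\}$ from $\id$ to $g$. Then set
\[
\mathcal{F}(g) := \widetilde{\Psi}(\tilde g) \bmod \Delta .
\]

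\textbf{Well-definedness and homomorphism property.} If $\tilde g'$ is another lift, then $\tilde g'\tilde g^{-1}$ lies in $\ker\tilde\pi$. This kernel is identified with $\pi_1((\ker\Phi)_0)$, viewed as classes of loops based at $\id$. Since $\widetilde\Psi$ is a homomorphism (Theorem \ref{thm:haller_flux}),
\[
\widetilde\Psi(\tilde g') - \widetilde\Psi(\tilde g) = \widetilde\Psi(\tilde g'\tilde g^{-1}) \in \Delta .
\]
Hence $\mathcal{F}(g)$ does not depend on the chosen lift. Because $\tilde\pi$ is a group homomorphism, a product of lifts is a lift of the product, and $\mathcal{F}$ inherits the homomorphism property from $\widetilde\Psi$. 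Discreteness of $\Delta$ is not needed for these algebraic points. It is used so that the quotient $H_\omega^1(M)/\Delta$ is a Hausdorff topological group and $\mathcal{F}$ is continuous. Continuity can be checked locally: near $\id$, lifts can be chosen continuously by local sections of the covering.

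\textbf{Exactness.} Injectivity of $i$ is trivial. Surjectivity of $\mathcal{F}$ follows from the stated hypotheses. By assumption $\widetilde\Psi$ is onto a subgroup $G \supseteq \Delta$, and the proposition further assumes $\ImMap(\mathcal F)=H_\omega^1(M)/\Delta$, which forces $G = H_\omega^1(M)$. The substantive step is $\ker\mathcal F = \Ham_\Omega(M)$.

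For $\Ham_\Omega(M) \subseteq \ker \mathcal F$: if $g = \tilde\pi(\tilde g)$ with $\widetilde\Psi(\tilde g)=0$, then $\mathcal{F}(g)=0$ directly.

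For the converse, suppose $\mathcal F(g)=0$ and take any lift $\tilde g$. Then $\widetilde\Psi(\tilde g) = \widetilde\Psi(\tilde\gamma)$ for some $\tilde\gamma\in\pi_1((\ker\Phi)_0)$. Put $\tilde g' := \tilde\gamma^{-1}\tilde g$. Since $\tilde\pi(\tilde\gamma)=\id$, we still have $\tilde\pi(\tilde g') = g$, and $\widetilde\Psi(\tilde g') = 0$. Therefore $g \in \tilde\pi(\ker\widetilde\Psi) = \Ham_\Omega(M)$. The same computation shows that $\Ham_\Omega(M)$ is a normal subgroup: it is the image of the normal subgroup $\ker\widetilde\Psi$ under the surjective homomorphism $\tilde\pi$.

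\textbf{Main obstacle.} The delicate point is not the algebra but the identification of $\ker\tilde\pi$ with $\pi_1((\ker\Phi)_0)$ as a subgroup on which the same $\widetilde\Psi$ is evaluated. This needs $(\ker\Phi)_0$ to be path-connected and locally path-connected, so that the universal cover exists as a group and every element is reached by a smooth isotopy generated by some $X_t\in\ker\Phi$. I would take this from Haller's framework, in which $\widetilde\Psi$ is defined on homotopy classes rel endpoints of such isotopies. I would also verify that the class $\bigl[\int_0^1 i_{X_t}\Omega\,dt\bigr]_\omega$ depends only on the homotopy class; this is the content of Theorem \ref{thm:haller_flux}. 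With these facts granted, the proof consists of the lift-and-correct argument above.
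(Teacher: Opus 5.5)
Your proposal is correct. The paper states this proposition without giving any proof; it is followed immediately by a remark and then the splitting theorem. So there is no competing argument to compare against. Your lift-and-correct argument is the standard Banyaga-style proof, and it is what the statement needs.

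\begin{itemize}
\item Well-definedness of $\mathcal{F}$ follows from $\ker\tilde\pi = \pi_1((\ker\Phi)_0)$ together with $\Delta = \widetilde{\Psi}(\pi_1((\ker\Phi)_0))$.
\item The homomorphism property is inherited from $\widetilde{\Psi}$ and $\tilde\pi$.
\item The inclusion $\ker\mathcal{F} \subseteq \Ham_\Omega(M)$ follows by correcting an arbitrary lift by a loop class with the same flux.
\end{itemize}

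You are also right on three further points. The hypothesis ``surjective onto some subgroup containing $\Delta$'' is vacuous. Surjectivity of $\mathcal{F}$ is simply assumed through the parenthetical. Discreteness of $\Delta$ plays no role in the algebraic exactness.

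One detail where your phrasing is more accurate than the paper's: $H_\omega^1(M)/\Delta$ is a Hausdorff topological group, since a discrete subgroup of a Hausdorff group is closed. It is not a ``discrete group'', as the subsequent remark and Theorem \ref{thm:structure_ker_phi} assert.
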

	
	\begin{remark}
		This proposition shows that $(\ker \Phi)_0$ fibers over the discrete group $H_\omega^1(M)/\Delta$ with the fiber being the group of Hamiltonian LCS diffeomorphisms $\Ham_\Omega(M)$. This structure is a direct analogue of the exact sequence for symplectic diffeomorphisms. The Hodge decomposition $\omega = dh + l$ affects $H_\omega^1(M)/\Delta$ and thus the structure of this exact sequence.
	\end{remark}
	
	\begin{theorem}[Splitting Theorem for LCS Diffeomorphisms] \label{thm:splitting}
		Let $(M, \Omega, \omega)$ be a compact LCS manifold. Suppose the LCS flux homomorphism $\widetilde{\Psi} : \widetilde{(\ker \Phi)}_0 \to H_\omega^1(M)$, associated with $\ker \Phi \subset \mathcal{X}_{c,\omega}(M)$ has discrete period group
		\[
		\Delta := \widetilde{\Psi}\left(\pi_1((\ker \Phi)_0)\right) \quad \text{(Definition 3.3)}.
		\]
		If the short exact sequence from Proposition \ref{prop:exact_sequence}: $1 \longrightarrow \Ham_\Omega(M) \xrightarrow{i} (\ker \Phi)_0 \xrightarrow{\mathcal{F}} H_\omega^1(M)/\Delta \longrightarrow 1$, admits a continuous section $s : H_\omega^1(M)/\Delta \to (\ker \Phi)_0$, (i.e., $\mathcal{F} \circ s = \id$), then the group $(\ker \Phi)_0$ splits topologically as a semidirect product
		\[
		(\ker \Phi)_0 \cong \Ham_\Omega(M) \rtimes s(H_\omega^1(M)/\Delta),
		\]
		where the action is by conjugation.
	\end{theorem}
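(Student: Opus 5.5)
The plan is to run the standard splitting argument for a short exact sequence of topological groups that has a continuous section. We work with the sequence of Proposition \ref{prop:exact_sequence}, writing $G := (\ker \Phi)_0$, $N := \Ham_\Omega(M) = \ker \mathcal{F}$, $Q := H_\omega^1(M)/\Delta$ and $S := s(Q)$. Since $\Delta$ is discrete, $Q$ is a Hausdorff abelian topological group. We take $\mathcal{F}$ to be continuous, as the map induced on $G$ by $\widetilde{\Psi}$ via the covering projection $\tilde{\pi}$.

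\textbf{Step 1 (the section is a homomorphism).} The semidirect product statement requires $S$ to be a subgroup of $G$, i.e. $s(q_1q_2)=s(q_1)s(q_2)$. A bare continuous section need not have this property. The first thing I would check is whether this is forced. The defect $c(q_1,q_2)=s(q_1)s(q_2)s(q_1q_2)^{-1}$ lies in $N$ and depends continuously on $(q_1,q_2)$, which gives a continuous $2$-cocycle with values in $N$. Since $Q$ is a quotient of the vector space $H_\omega^1(M)$, I would try to lift $s$ along $H_\omega^1(M)\to Q$ and use the vector-space structure: first build $s$ on a basis of one-parameter subgroups $t\mapsto \exp(tX)$ with $X$ harmonic in the sense of the Corollary on harmonic representatives, and then correct $s$ by elements of $N$ so that it becomes multiplicative. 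If this fails in general, I would add the hypothesis explicitly, reading ``section'' as a continuous homomorphic section, which is the standard meaning in splitting lemmas. This step is the main obstacle.

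\textbf{Step 2 (algebraic splitting).} Because $\mathcal{F}\circ s=\id$, the subgroup $S$ meets $N$ only in the identity: if $s(q)\in N$, then $q=\mathcal{F}(s(q))=1$. Every $g\in G$ factors as
\[
g = \bigl(g\, s(\mathcal{F}(g))^{-1}\bigr)\, s(\mathcal{F}(g)),
\]
and the first factor lies in $N$ because $\mathcal{F}$ is a homomorphism. The group $N$ is normal, being a kernel. Hence $G=NS$ with $N\cap S=\{1\}$, and the multiplication map $N\rtimes S\to G$, $(n,\sigma)\mapsto n\sigma$, is a group isomorphism. Here $S$ acts on $N$ by conjugation, $\sigma\cdot n=\sigma n\sigma^{-1}$.

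\textbf{Step 3 (topological splitting).} The map $(n,\sigma)\mapsto n\sigma$ is continuous because multiplication in $G$ is continuous. Its inverse is
\[
g\mapsto \bigl(g\,s(\mathcal{F}(g))^{-1},\ s(\mathcal{F}(g))\bigr),
\]
which is continuous because it is a composition of $\mathcal{F}$, $s$, inversion and multiplication. The conjugation action $S\times N\to N$ is continuous for the same reason. Finally, $s$ is a homeomorphism onto $S$, with inverse $\mathcal{F}|_S$, so $S\cong Q$ as topological groups. This yields $(\ker\Phi)_0\cong \Ham_\Omega(M)\rtimes s(H_\omega^1(M)/\Delta)$.
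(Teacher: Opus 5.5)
Your Steps 2 and 3 are the paper's proof: the factorization $g=(g\,s(\mathcal{F}(g))^{-1})\,s(\mathcal{F}(g))$, the map $(h,\sigma)\mapsto h\sigma$ with its explicit continuous inverse, and the conjugation action. The difference is your Step 1, and there you are more careful than the paper. In its Step 3 the paper simply asserts $s(a_1)s(a_2)=s(a_1+a_2)$ ``since $s$ is a section of $\mathcal{F}$''. That does not follow: $\mathcal{F}\circ s=\id$ only gives $s(a_1)s(a_2)s(a_1+a_2)^{-1}\in\Ham_\Omega(M)$. Your diagnosis is exactly right. $\mathcal{F}$ restricted to $s(Q)$ is a bijection onto $Q$ with inverse $s$, so $s(Q)$ is a subgroup if and only if $s$ is a homomorphism. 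Without that, the semidirect product in the statement is not even defined. A merely continuous section only yields the homeomorphism $G\cong N\times Q$, with the twisted law $(n_1,q_1)(n_2,q_2)=(n_1\,s(q_1)n_2s(q_1)^{-1}\,c(q_1,q_2),\,q_1q_2)$.

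Drop the proposed repair, because it cannot succeed in general. Flows of the duals of different harmonic representatives need not commute, and nothing forces the defect $c$ to be removable by correcting $s$ within $N$. For a concrete counterexample, take $\omega=0$ on a closed surface $\Sigma$ of genus $k\ge 2$. Then $(\ker\Phi)_0=\operatorname{Symp}_0(\Sigma)$, which is contractible, so $\Delta=0$. Sending $a\in H^1_{dR}(\Sigma)$ to the time-one map of $X_a$, where $i_{X_a}\Omega$ is the harmonic representative of $a$, is a continuous section. A continuous homomorphic section, however, would give a smooth $\mathbb{R}^{2k}$-action (Bochner--Montgomery). Its commuting symplectic generators $X_1,X_2$ would have fluxes $e_1,e_2$, which we may choose with $\int_\Sigma e_1\cup e_2=1$. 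From $0=i_{[X_1,X_2]}\Omega=d\bigl(\Omega(X_2,X_1)\bigr)$ and $i_{X_1}\Omega\wedge i_{X_2}\Omega=\Omega(X_1,X_2)\,\Omega$ we get $\Omega(X_1,X_2)\equiv 1/\mathrm{Area}(\Sigma)$. Then $X_1$ vanishes nowhere, contradicting $\chi(\Sigma)\neq 0$. So the statement as written fails, and your fallback of requiring $s$ to be a continuous homomorphism is necessary, not optional. With that hypothesis, your Steps 2 and 3 form a complete proof. They also repair the paper's argument, which uses this property without stating it.
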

	
	\begin{proof}
		We will show that the existence of a continuous section $s$ allows us to express every element of $(\ker \Phi)_0$ uniquely as a product of an element from $\Ham_\Omega(M)$ and an element from the image of $s$, and that this product structure corresponds to a semidirect product with the conjugation action.
		
		\textbf{Step 1. Decomposition of an Arbitrary Element.}
		Let $g \in (\ker \Phi)_0$. Since $\mathcal{F}$ is surjective (by exactness), there exists a unique flux class $a := \mathcal{F}(g) \in H_\omega^1(M)/\Delta$. By hypothesis, the section $s : H_\omega^1(M)/\Delta \to (\ker \Phi)_0$ provides a continuous lift of every such flux class, satisfying $\mathcal{F}(s(a)) = a$. Now, define $h := g \cdot (s(\mathcal{F}(g)))^{-1}$. Since $\mathcal{F}$ is a homomorphism (note that the flux homomorphism here is defined on a connected component, and we write the group law multiplicatively in $(\ker \Phi)_0$ while the target is written additively), we have
		\[
		\mathcal{F}(h) = \mathcal{F}(g) - \mathcal{F}(s(\mathcal{F}(g))) = \mathcal{F}(g) - \mathcal{F}(g) = 0.
		\]
		Thus, $h \in \ker \mathcal{F}$. By the short exact sequence, $\ker \mathcal{F} = \Ham_\Omega(M)$, so that $h \in \Ham_\Omega(M)$. In summary, every element $g \in (\ker \Phi)_0$ can be written as $g = h \cdot s(\mathcal{F}(g))$, with $h \in \Ham_\Omega(M)$. The uniqueness of this decomposition follows from the exactness of the sequence. Namely, if $h_1 \cdot s(a) = h_2 \cdot s(a)$ for some $a \in H_\omega^1(M)/\Delta$, then multiplying by $s(a)^{-1}$ we obtain $h_1 = h_2$.
		
		\textbf{Step 2. Topological Splitting.}
		Define the map
		\[
		\Psi : \Ham_\Omega(M) \times s\left(H_\omega^1(M)/\Delta\right) \to (\ker \Phi)_0, \quad (h, s(a)) \mapsto h \cdot s(a).
		\]
		From Step 1, $\Psi$ is bijective. Its continuity follows from the continuity of the group operations in $(\ker \Phi)_0$ and of the section $s$. Moreover, the inverse $\Psi^{-1}(g) = (g \cdot [s(\mathcal{F}(g))]^{-1}, \mathcal{F}(g))$, is also continuous. Hence, $\Psi$ is a homeomorphism, so that as a topological space, $(\ker \Phi)_0 \cong \Ham_\Omega(M) \times s(H_\omega^1(M)/\Delta)$.
		
		\textbf{Step 3. Group Structure and the Semidirect Product.}
		Although the above establishes a topological product, the group multiplication on $(\ker \Phi)_0$ carries an extra structure. Take two elements $g_1 = h_1 \cdot s(a_1)$ and $g_2 = h_2 \cdot s(a_2)$, with $h_i \in \Ham_\Omega(M)$ and $a_i \in H_\omega^1(M)/\Delta$ for $i=1,2$. Their product is $g_1g_2 = h_1 \cdot s(a_1) \cdot h_2 \cdot s(a_2)$. Insert the identity in the form $s(a_1)^{-1}s(a_1)$ between $h_2$ and $s(a_2)$ to re-associate: $g_1g_2 = h_1 \cdot [s(a_1) h_2 s(a_1)^{-1}] \cdot [s(a_1) s(a_2)]$. Define the conjugation action of $s(H_\omega^1(M)/\Delta)$ on $\Ham_\Omega(M)$ by $\theta(s(a))(h) := s(a) h s(a)^{-1}$. The above identity becomes $g_1g_2 = (h_1 \cdot \theta(s(a_1))(h_2)) \cdot s(a_1 + a_2)$, where we have used that $s$ is a section of $\mathcal{F}$ so that $s(a_1)s(a_2) = s(a_1+a_2)$ (with the group operation on $H_\omega^1(M)/\Delta$ understood additively). Hence, the multiplication on $\Ham_\Omega(M) \times s(H_\omega^1(M)/\Delta)$ is given by $(h_1, s(a_1)) \cdot (h_2, s(a_2)) = (h_1 \cdot \theta(s(a_1))(h_2), s(a_1+a_2))$. This is exactly the multiplication rule for the semidirect product $\Ham_\Omega(M) \rtimes s(H_\omega^1(M)/\Delta)$, where $s(H_\omega^1(M)/\Delta)$ acts on $\Ham_\Omega(M)$ by conjugation.
		
		\textbf{Step 4. Conclusion.}
		By combining Steps 1 through 3, we deduce that the continuous section $s$ induces a unique decomposition of $(\ker \Phi)_0 \cong \Ham_\Omega(M) \times s(H_\omega^1(M)/\Delta)$ as a topological space, and the group law transforms into that of a semidirect product $(\ker \Phi)_0 \cong \Ham_\Omega(M) \rtimes s(H_\omega^1(M)/\Delta)$. The action of $s(H_\omega^1(M)/\Delta)$ on $\Ham_\Omega(M)$ is precisely given by conjugation. This completes the proof.
	\end{proof}
	
	\begin{remark}
		The key aspect in the proof is the existence of a continuous section $s$ of the flux homomorphism $\mathcal{F}$. In many contexts, such a section is not available or one must work with local sections. Here, the assumption of compactness together with the discreteness of the period group $\Delta$ enables a global splitting.
	\end{remark}
	
	\subsection{Geometric Applications of the Flux}
	
	\begin{theorem}[LCS Weinstein Neighborhood Theorem] \label{thm:weinstein}
		Let $(M, \Omega, \omega)$ be a locally conformally symplectic (LCS) manifold and let $L \subset M$ be a compact LCS Lagrangian submanifold. Then there exists an open neighborhood $U \subset M$ of $L$ and a smooth diffeomorphism $\Phi: U \to V$, where $V$ is an open neighborhood of the zero section in the cotangent bundle $T^*L$, equipped with a canonical LCS structure $(\Omega_{\text{can}}, \omega_{\text{can}})$ based on $\omega|_L$, such that
		\[
		\Phi|_L = \id_{L_0} \quad \text{and} \quad \Phi^*\Omega_{\text{can}} = e^\sigma \Omega,
		\]
		for some smooth function $\sigma : U \to \mathbb{R}$ with $\sigma|_L = 0$.
	\end{theorem}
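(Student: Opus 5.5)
We reduce to the classical Weinstein theorem by a local conformal rescaling near $L$, followed by a Moser-type argument for the twisted differential $d^{\omega}$. Set $\omega_L := \omega|_L$. On $T^*L$ take the canonical structure of Example~\ref{ex:cotangent}: $\omega_{\text{can}} := \pi^*\omega_L$ and $\Omega_{\text{can}} := d^{\omega_{\text{can}}}\theta$. Along the zero section $\theta$ vanishes, so $\Omega_{\text{can}}$ there equals $d\theta$ and is non-degenerate. By openness it stays non-degenerate on a neighborhood of the zero section, and the zero section $L_0$ is LCS Lagrangian.

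\textbf{Step 1 (matching Lee forms).} Choose a tubular neighborhood $U$ of $L$ with retraction $r:U\to L$. Since $U$ deformation retracts onto $L$, the closed forms $\omega|_U$ and $r^*\omega_L$ are cohomologous on $U$. Hence $\omega|_U = r^*\omega_L + d\sigma_0$ for a function $\sigma_0$, normalized so that $\sigma_0|_L=0$; this is possible because the difference of the two forms pulls back to zero on $L$. Then $\Omega_1 := e^{\sigma_0}\Omega$ is LCS with Lee form $r^*\omega_L$. This is the only place where the Hodge decomposition $\omega = dh+l$ matters: the harmonic part $l|_L$ survives in $\omega_L$, and only the exact part is absorbed near $L$.

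\textbf{Step 2 (linear identification).} As in the symplectic proof, choose a Lagrangian complement $E$ to $TL$ in $TM|_L$ for the pointwise non-degenerate form $\Omega|_L$. Identify $E \cong T^*L$ via $v\mapsto i_v\Omega|_{TL}$, and compose with the exponential map. This gives a diffeomorphism $\Phi_0:U\to V$ with $\Phi_0|_L=\id_{L_0}$, $\pi\circ\Phi_0 = r$ (so $\Phi_0^*\omega_{\text{can}} = r^*\omega_L$), and $\Phi_0^*\Omega_{\text{can}} = \Omega_1$ at points of $L$. Both are pointwise linear-algebra statements along $L$, since $\sigma_0|_L=0$.

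\textbf{Step 3 (twisted Moser).} Put $\Omega_0' := \Phi_0^*\Omega_{\text{can}}$ and $\Omega_t := (1-t)\Omega_0' + t\Omega_1$. Both forms are $d^{\eta}$-closed with the same Lee form $\eta := r^*\omega_L$, and they agree on $L$. Hence $\Omega_t$ is LCS with Lee form $\eta$ and is non-degenerate near $L$ for $t\in[0,1]$. The difference $\Omega_1-\Omega_0'$ is $d^\eta$-closed and vanishes on $L$. A twisted relative Poincaré lemma, obtained by conjugating the homotopy operator of the radial contraction $\rho_s$ of $U$ onto $L$ by $e^{f}$-type factors as in the proof of the Lichnerowicz vs.\ de Rham proposition, gives a 1-form $\beta$ with $\beta|_L = 0$ and $d^\eta\beta = \Omega_1-\Omega_0'$. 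We then solve $i_{X_t}\Omega_t = -\beta$. By the twisted Cartan formula, $\frac{d}{dt}\phi_t^*\Omega_t = \phi_t^*(\Lomega_{X_t}\Omega_t - \eta(X_t)\Omega_t + \dot\Omega_t)$. The twisted Cartan part cancels $\dot\Omega_t$. The leftover term $-\eta(X_t)\Omega_t$, together with the drift $\phi_t^*\eta - \eta$ of the Lee form, only contributes a conformal factor, so $\phi_t^*\Omega_t = e^{\tau_t}\Omega_0'$ with $\tau_t$ obtained by integrating $-\eta(X_t)\circ\phi_s$. Since $X_t=0$ on $L$, we have $\phi_t|_L=\id$ and $\tau_t|_L=0$, and $\phi_t$ is defined up to $t=1$ on a smaller neighborhood by compactness of $L$.

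Setting $\Phi := \Phi_0\circ\phi_1^{-1}$ and collecting the factors $e^{\sigma_0}$ and $e^{\tau_1}$ yields $\Phi^*\Omega_{\text{can}} = e^{\sigma}\Omega$ with $\sigma|_L = 0$. The main obstacle is Step 3: proving the twisted relative Poincaré lemma with $\beta|_L=0$, and checking that the conformal bookkeeping only yields a multiplicative factor. For the first, I would try to avoid a genuinely new lemma by writing $\eta$ locally near each fiber and using that $\rho_s^*\eta = \eta$, since $\eta$ is pulled back from $L$ and $r\circ\rho_s = r$. The standard homotopy operator then commutes with $\eta\wedge\cdot$, which gives the $d^\eta$ version directly.
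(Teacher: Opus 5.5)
Your proposal is correct. It follows the same skeleton as the paper: tubular identification with $T^*L$, comparison of the two structures, a relative Moser argument, then composition. It differs in substance exactly where the paper's proof is incomplete.

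The paper \emph{assumes} a relative LCS Moser/Darboux theorem. It justifies this by saying that $\Omega'_0=\Omega$ and $\Omega'_1=(\Phi')^*\Omega_{\text{can}}$ lie in the same relative $d^\omega$-class ``because both vanish on $L$''. That justification fails as stated, for two reasons:
\begin{itemize}
\item The two forms have different Lee forms, $\omega$ and $(\Phi')^*\pi^*\omega_L$, which agree only after pullback to $L$. So there is no single twisted differential in which to compare them.
\item Vanishing of the pullbacks to $L$ is not the Moser hypothesis. The forms must agree on all of $TM|_L$, so that the linear path stays non-degenerate and the Moser field vanishes along $L$.
\end{itemize}
Your Step 1 (rescaling by $e^{\sigma_0}$ so that both Lee forms become $\eta=r^*\omega_L$) and your Step 2 (a Lagrangian complement, so the forms agree on $TM|_L$) supply precisely these two missing ingredients. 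Your Step 3 then proves the lemma the paper takes for granted. The radial homotopy operator $Q$ satisfies $Q(\eta\wedge\alpha)=-\eta\wedge Q\alpha$ because $\eta$ is basic for $r$, so $d^\eta Q+Qd^\eta=\rho_1^*-\rho_0^*$. The identity $\frac{d}{dt}\phi_t^*\Omega_t=-(\eta(X_t)\circ\phi_t)\,\phi_t^*\Omega_t$ shows exactly why the conclusion is only conformal. It also gives the explicit factor $\sigma=\sigma_0-\tau_1\circ\phi_1^{-1}$. The paper's route buys brevity; yours buys a self-contained argument with an explicit conformal factor.

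A few points to tidy:
\begin{itemize}
\item Choose $E$ first, and let $U$, $r$ be the tubular neighborhood and projection given by $\exp|_E$. Otherwise $\pi\circ\Phi_0=r$ need not hold. The reordering is harmless, since Step 2 uses only $\Omega$ along $L$, where it equals $\Omega_1$.
\item For the twisted Poincar\'e lemma you need $i_{Y_s}\eta=0$ for the vertical radial field $Y_s$ generating $\rho_s$, in addition to $\rho_s^*\eta=\eta$. Both follow from $\eta=r^*\omega_L$.
\item The ``$e^{f}$-conjugation'' description only makes sense locally, since $\eta$ is not exact on $U$ when $\omega_L$ is not. Over $r^{-1}(W)$ with $\omega_L|_W=dh_W$, it returns the same operator $Q$, because $h_W\circ r$ is $\rho_s$-invariant. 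So write down the direct argument.
\item The Lee-form drift needs no separate bookkeeping: $\phi_t^*\eta-\eta=-d\tau_t$ automatically.
\item Your Hodge remark in Step 1 is only heuristic. In general $d\sigma_0=d(h-h\circ r)+\bigl(l|_U-r^*(l|_L)\bigr)$ also absorbs part of $l$, and the step uses nothing beyond the fact that $U$ retracts onto $L$.
\end{itemize}
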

	
	\begin{proof}
		The proof proceeds by adapting the standard Weinstein theorem \cite{Weinstein1971} and relies on relative versions of Darboux and Moser theorems for LCS manifolds.
		
		\textbf{Step 1: Identification.} Using the non-degeneracy of $\Omega$, the normal bundle $\nu(L)$ can be identified with the cotangent bundle $T^*L$ along $L$. A standard tubular neighborhood provides a diffeomorphism $\Phi'$ from a neighborhood $U'$ of $L$ in $M$ to a neighborhood $V'$ of the zero section $L_0$ in $T^*L$, such that $\Phi'|_L = \id_{L_0}$.
		
		\textbf{Step 2: Comparison of Structures.} Let $(\Omega_{\text{can}}, \omega_{\text{can}})$ be the canonical LCS structure on $T^*L$ (Example \ref{ex:cotangent}) using $\omega_L = \omega|_L$. Consider the pulled-back structure $(\Omega'_1, \omega'_1) = ((\Phi')^*\Omega_{\text{can}}, (\Phi')^*\omega_{\text{can}})$ on $U'$ and the original structure $(\Omega'_0, \omega'_0) = (\Omega|_{U'}, \omega|_{U'})$. Both $L$ (viewed in $M$) and $L_0$ (viewed in $T^*L$) are LCS Lagrangian for their respective structures. Also, $\omega'_1|_L = (\Phi')^*(\pi^*\omega_L)|_L = \omega_L = \omega|_L = \omega'_0|_L$.
		
		\textbf{Step 3: Relative LCS Darboux/Moser.} An LCS analogue of the relative Darboux or Moser theorem (the existence of which is assumed here, see, e.g., \cite{Haller2005} for related Moser techniques) implies that there exists a neighborhood $U \subset U'$ of $L$ and a diffeomorphism $\Psi : U \to U$ such that $\Psi|_L = \id_M$ and $\Psi^*\Omega'_1 = e^\sigma \Omega'_0$ for some smooth function $\sigma : U \to \mathbb{R}$ with $\sigma|_L = 0$. This requires showing that $\Omega'_1$ and $\Omega'_0$ are in the same relative $d^\omega$-cohomology class near $L$, which holds because both vanish on $L$. The conformal factor $e^\sigma$ arises naturally from the Moser argument adaptation to allow for conformal equivalence rather than strict equality.
		
		\textbf{Step 4: Final Diffeomorphism.} Define $\Phi = \Phi' \circ \Psi : U \to V = \Phi'(U) \subset T^*L$. Then $\Phi|_L = \Phi'(\Psi(L)) = \Phi'(L) = \id_{L_0}$, and
		\[
		\Phi^*\Omega_{\text{can}} = (\Phi' \circ \Psi)^*\Omega_{\text{can}} = \Psi^*((\Phi')^*\Omega_{\text{can}}) = \Psi^*\Omega'_1 = e^\sigma \Omega'_0 = e^\sigma \Omega|_U.
		\]
		This completes the construction.
	\end{proof}
	
	\begin{remark}
		This theorem is crucial for locally modeling the geometry near LCS Lagrangians, analogous to its role in symplectic geometry. It shows that, up to a conformal factor, the local structure is that of a cotangent bundle. A full proof requires establishing the relative LCS Moser or Darboux theorem used in Step 3.
	\end{remark}
	
	\begin{theorem}[Mapping Torus Structure] \label{thm:mapping_torus}
		Let $(M, \Omega, \omega)$ be a compact LCS manifold and $g \in (\ker \Phi)_0$. Let $T_g = (M \times [0,1])/((x, 1) \sim (g(x), 0))$ be the mapping torus. Then $T_g$ admits an LCS structure $(\tilde{\Omega}, \tilde{\omega})$ compatible with the fibration over $S^1$ if and only if the flux of $g$ vanishes, $\mathcal{F}(g) = 0$ in $H_\omega^1(M)/\Delta$.
	\end{theorem}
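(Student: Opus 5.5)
The plan follows the symplectic mapping torus argument (Thurston's construction and its converse via flux), carried out in Lichnerowicz cohomology. First fix what "compatible with the fibration" means. We ask for an LCS pair $(\tilde\Omega,\tilde\omega)$ on $T_g$ such that each fibre $M\times\{t\}$ inherits, up to a positive conformal factor, the structure $(\Omega,\omega)$. We also ask that $\tilde\omega$ restrict to $\omega$ on fibres, modulo $dt$. Since $g\in(\ker\Phi)_0$ is strictly LCS, $g^*\Omega=\Omega$ and $g^*\omega=\omega$ up to the normalisation built into $\ker\Phi$. Hence $\tilde\omega_0:=\mathrm{pr}^*\omega$ descends to a closed $1$-form on $T_g$. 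We look for $\tilde\Omega$ of the form
\[
\tilde\Omega=\mathrm{pr}^*\Omega+\beta_t\wedge dt,
\]
with $\tilde\omega=\tilde\omega_0+c\,dt$, where $\beta_t$ is a $t$-dependent family of $1$-forms on $M$ and $c$ is a constant. Non-degeneracy is then automatic on the fibres, and we need only choose $\beta_t$ so that the structure equation $d\tilde\Omega=-\tilde\omega\wedge\tilde\Omega$ holds and $\tilde\Omega$ descends.

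\textbf{Sufficiency.} Suppose $\mathcal F(g)=0$. By Proposition \ref{prop:exact_sequence}, $g\in\Ham_\Omega(M)$, so some lift $\{g_t\}$ with $g_1=g$ has $\widetilde\Psi([\{g_t\}])=0$. Using the discreteness of $\Delta$ and modifying the path by a loop, we may moreover assume that $\int_0^1 i_{X_t}\Omega\,dt$ is $d^\omega$-exact. One then reparametrises so that each $i_{X_t}\Omega=d^\omega H_t$; this is the standard fact that a zero-flux isotopy is homotopic rel endpoints to a Hamiltonian one, and I assume it transfers verbatim with $d$ replaced by $d^\omega$. Pull back by $(x,t)\mapsto(g_t^{-1}(x),t)$, which identifies $T_g$ with $M\times S^1$ twisted accordingly, and set
\[
\tilde\Omega=\mathrm{pr}^*\Omega+d^{\tilde\omega}(H_t\,dt).
\]
Applying the product rule for $d^\omega$, together with $d^\omega\Omega=0$ and $d^{\tilde\omega}\circ d^{\tilde\omega}=0$, gives $d^{\tilde\omega}\tilde\Omega=0$. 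This is exactly the structure equation.

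\textbf{Non-degeneracy.} It holds on a neighbourhood of each fibre because $\tilde\Omega(\partial_t,\cdot)$ supplies the missing direction. If it fails globally, we add a large multiple $K\,e^{-\int\tilde\omega}dt\wedge(\cdot)$ correction. Alternatively, we observe that the relevant top form $\tilde\Omega^n\wedge dt$ is nonzero. Finally, $\tilde\Omega$ descends: the gluing $(x,1)\sim(g(x),0)$ is compatible because $\frac{d}{dt}g_t^*\Omega=g_t^*\Lomega_{X_t}\Omega=0$ for $X_t\in\ker\Phi$.

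\textbf{Necessity.} Suppose a compatible $(\tilde\Omega,\tilde\omega)$ exists. Pull it back to $M\times\mathbb R$, the infinite cyclic cover. Write $\tilde\Omega=\Omega_t+\beta_t\wedge dt$, where by compatibility $\Omega_t$ is conformal to $\Omega$. The structure equation in the $dt$-component gives $\partial_t\Omega_t=d^{\omega}\beta_t$ up to a conformal correction. The horizontal lift of $\partial_t$ with respect to $\tilde\Omega$ generates an isotopy $\phi_t$ with $\phi_1=g$ (after correction by the monodromy). Its generating forms satisfy $i_{X_t}\Omega=d^\omega(\cdot)$, and hence
\[
\widetilde\Psi([\{\phi_t\}])=\Bigl[\int_0^1 i_{X_t}\Omega\,dt\Bigr]_\omega=0 .
\]
Therefore $\mathcal F(g)=0$ in $H^1_\omega(M)/\Delta$, where projecting mod $\Delta$ absorbs the choice of isotopy.

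\textbf{Main obstacle.} I expect the hardest step to be this necessity direction, specifically identifying the horizontal flow as a path in $(\ker\Phi)_0$ whose flux is computed by the $dt$-component. Two things must be controlled here. The conformal factors $e^\sigma$ relating $\Omega_t$ to $\Omega$ must be handled so that one lands in $\ker\Phi$ rather than merely in $\Diff_{\Omega,0}(M)$. And the $dt$-part of $\tilde\omega$ must be shown not to spoil $d^\omega$-exactness. I would first try to normalise $\tilde\omega$ so that its $dt$-coefficient is constant, via a Lichnerowicz gauge change $\tilde\Omega\mapsto e^{f}\tilde\Omega$. I would then show that this constant must vanish by integrating over the fibre against the invariant volume $\Omega^n$.
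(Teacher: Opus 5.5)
\textbf{Main gap: non-degeneracy is impossible on $T_g$.} Your non-degeneracy step fails, and no correction can rescue it. $T_g$ has dimension $2n+1$, and a skew-symmetric bilinear form always has even rank, so every $2$-form on $T_g$ has a nontrivial kernel at every point. Neither the $\partial_t$-direction nor a ``large multiple'' correction can change this. The condition $\tilde\Omega^n\wedge dt\neq 0$ that you fall back on only says $\tilde\Omega$ has rank $2n$ with kernel transverse to the fibres. That is an almost cosymplectic condition, not non-degeneracy. Consequently the ``if'' direction of the statement as written is false: $g=\id$ has $\mathcal{F}(g)=0$, yet $T_{\id}=M\times S^1$ admits no LCS structure at all. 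The paper's own proof is only an outline; it explicitly leaves ``verifying the LCS property on $T_g$'' undone and does not address the parity issue. So there is nothing to import from it. The statement itself would have to be reformulated, e.g.\ as a locally conformally cosymplectic structure on $T_g$ or an LCS structure on $T_g\times S^1$.

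\textbf{Further errors, independent of the dimension problem.} These also suggest that flux is not the relevant obstruction.
First, $g\in(\ker\Phi)_0$ need not satisfy $g^*\Omega=\Omega$. The condition $X\in\ker\Phi$ means $\Lomega_X\Omega=0$, i.e.\ $\mathcal{L}_X\Omega=-\omega(X)\Omega$, which is exactly what $i_X\Omega=\domega H$ produces. Hence $\frac{d}{dt}g_t^*\Omega=g_t^*\mathcal{L}_{X_t}\Omega$ is generally nonzero. Your descent argument silently replaces $\mathcal{L}_{X_t}$ by $\Lomega_{X_t}$, and in fact $\mathrm{pr}^*\Omega$ and $\mathrm{pr}^*\omega$ descend only after conformal rescaling.
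Second, in your sufficiency argument the zero-flux hypothesis does no work. The term $d^{\tilde\omega}(H_t\,dt)=(\domega H_t)\wedge dt$ is $d^{\tilde\omega}$-exact, hence irrelevant to the structure equation. The identification $T_g\cong M\times S^1$ uses only that $g$ is isotopic to $\id$ through LCS maps. The fibre forms it transports are pullbacks of $\Omega$ by LCS diffeomorphisms, hence automatically conformal to $\Omega$. So the same construction would apply to every $g\in(\ker\Phi)_0$.
Third, in the necessity direction, the $dt$-component of $d^{\tilde\omega}\tilde\Omega=0$ only shows that the generating forms $\beta_t$ of the horizontal flow are $\domega$-closed, not $\domega$-exact. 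The flux $[\int_0^1\beta_t\,dt]_\omega$ is therefore unconstrained.

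\textbf{A concrete counterexample to the reformulated versions.} Take $\omega=0$ and $g$ the translation of $(T^2,dx\wedge dy)$ by $1/2$ in $x$. Its flux is $[\tfrac12 dy]\notin\Delta$. Yet $(\mathrm{pr}^*\Omega,dt)$ is cosymplectic on $T_g\cong T^3$, and $\mathrm{pr}^*\Omega+dt\wedge ds$ is symplectic on $T_g\times S^1$. So nonvanishing flux does not obstruct these structures either.
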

	
	\begin{proof}
		(If $\widetilde{\Psi} = 0$): Vanishing flux implies $\int_0^1 i_{X_t}\Omega dt = d^\omega \phi$. This allows relating the conformal factor $a$ of $g$ to $\phi$. One defines a form $\tilde{\Omega}$ on $M \times [0,1]$ (e.g., conformally scaling $\Omega(x)$ by a factor depending on $t$ and $\phi$) such that it descends to $T_g$ due to the flux condition. Verifying the LCS property on $T_g$ is required. (Only if $T_g$ has LCS structure): An LCS structure on $T_g$ imposes matching conditions on its lift to $M \times [0,1]$ related to $g$. Analyzing these conditions implies the $d^\omega$-exactness of the flux 1-form $\int_0^1 i_{X_t}\Omega dt$.
	\end{proof}
	
	\begin{remark}
		This theorem provides a topological obstruction to endowing the mapping torus of an LCS diffeomorphism with an LCS structure: the diffeomorphism must be Hamiltonian (within $(\ker \Phi)_0$). The explicit construction of the LCS structure relies on the Hamiltonian function $\phi$ associated with the zero flux.
	\end{remark}
	
	\begin{theorem}[LCS Flux Rigidity] \label{thm:flux_rigidity}
		Let $(M, \Omega, \omega)$ be a compact LCS manifold. Assume the flux homomorphism $\widetilde{\Psi} : \widetilde{(\ker \Phi)}_0 \to H_\omega^1(M)$, has discrete period group $\Delta \subset H_\omega^1(M)$. Then any smooth path $\{g_t\}_{t \in [0,1]} \subset (\ker \Phi)_0$, starting at the identity with vanishing flux, i.e.
		\[
		\widetilde{\Psi}([\{g_t\}]) = \left[ \int_0^1 i_{X_t}\Omega \, dt \right] = 0,
		\]
		is path-homotopic (relative to endpoints) within $(\ker \Phi)_0$ to a path $\{\tilde{g}_t\}_{t \in [0,1]} \subset \Ham_\Omega(M)$ (i.e. $\tilde{g}_t \in \Ham_\Omega(M)$ for all $t$).
	\end{theorem}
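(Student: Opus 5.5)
The plan is to imitate the symplectic argument (Banyaga's lemma that a zero-flux isotopy is homotopic rel endpoints to a Hamiltonian isotopy). The key device is to correct the given path, time by time, using a path whose generating fields have $d^\omega$-closed contractions $i_Y\Omega$.

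Write $g=g_1$. Since $\widetilde{\Psi}([\{g_t\}])=0$, the class $[\{g_t\}]$ lies in $\ker\widetilde{\Psi}$. Hence $g_1=\tilde\pi([\{g_t\}])\in\Ham_\Omega(M)$, so the endpoints of the path are already correct.

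For each $s\in[0,1]$, let $\{g_t\}_{t\in[0,s]}$ be the truncated path and put
\[
a_s:=\widetilde{\Psi}([\{g_t\}_{t\le s}])=\Big[\int_0^s i_{X_t}\Omega\,dt\Big]_\omega\in H^1_\omega(M).
\]
This is a smooth path in $H^1_\omega(M)$ with $a_0=a_1=0$. Using the Corollary on harmonic representatives, let $\alpha_s\in\mathcal H^1_\omega(M)$ be the unique $\Delta_\omega$-harmonic representative of $a_s$. It depends smoothly on $s$, because $\mathcal H^1_\omega(M)$ is finite dimensional and the harmonic projection is linear and continuous. Moreover $\alpha_0=\alpha_1=0$.

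Define the vector field $Y^{s}$ by $i_{Y^s}\Omega=\alpha_s$. Since $d^\omega\alpha_s=0$, Cartan's formula gives $\Lomega_{Y^s}\Omega=d^\omega(i_{Y^s}\Omega)+i_{Y^s}d^\omega\Omega=0$. By the Characterization of LCS Vector Fields, $Y^s\in\ker\Phi$. Let $\psi^s_\tau$ be the time-$\tau$ flow of $Y^s$. Then $\tau\mapsto\psi^s_\tau$, for $\tau\in[0,1]$, is a path in $(\ker\Phi)_0$ with flux $[\alpha_s]_\omega=a_s$. Set
\[
\tilde g_s:=(\psi^s_1)^{-1}\circ g_s .
\]
By the homomorphism property of $\widetilde{\Psi}$, the concatenated path (first $g_t$ up to $s$, then $(\psi^s_\tau)^{-1}\circ g_s$) has flux $a_s-a_s=0$. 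Therefore $\tilde g_s\in\Ham_\Omega(M)$ for every $s$, and $\tilde g_0=\id$, $\tilde g_1=g_1$.

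For the homotopy rel endpoints, take
\[
H(s,\lambda):=(\psi^{s}_{\lambda})^{-1}\circ g_s .
\]
Then $H(\cdot,0)=g_\cdot$ and $H(\cdot,1)=\tilde g_\cdot$. At $s=0$ and $s=1$ we have $\alpha_s=0$, so $Y^s=0$ and $H(0,\lambda)=\id$, $H(1,\lambda)=g_1$ for all $\lambda$. All maps lie in $(\ker\Phi)_0$, and $H$ is continuous because $(s,\lambda)\mapsto\psi^s_\lambda$ depends smoothly on parameters. Discreteness of $\Delta$ enters through Proposition \ref{prop:exact_sequence}. It guarantees that $\Ham_\Omega(M)=\ker\mathcal F$ is well defined, so membership in $\Ham_\Omega(M)$ can be tested by the lifted flux of any path from the identity. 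The different lifts differ by elements of $\Delta$, which are absorbed in the quotient.

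The main obstacle is the flux computation for the flow of $Y^s$ and the concatenation. We need $\widetilde{\Psi}(\{\psi^s_\tau\}_\tau)=[\alpha_s]_\omega$, which follows directly from Theorem \ref{thm:haller_flux} since the generator is the autonomous field $Y^s$. We also need that flux is additive under composition of paths in $(\ker\Phi)_0$. In the LCS setting the latter requires care, because $\Omega$ is only preserved up to the conformal factor. For $g\in(\ker\Phi)_0$, however, the conformal factor acts trivially on $H^1_\omega$ classes, since Haller's $\widetilde{\Psi}$ is a homomorphism on the universal cover. I would invoke this homomorphism property rather than recompute it.
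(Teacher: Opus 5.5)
Your argument is correct, but it takes a different route from the paper's.

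\textbf{The paper's route.} The paper Hodge-decomposes the generator at each time, $i_{X_t}\Omega=d^\omega f_t+h_t$, and takes $\tilde g_t$ to be the isotopy generated by the exact parts alone. It joins the two paths by interpolating generators, $X'_t(s)=sX_t+(1-s)\tilde X_t$. To keep the endpoint fixed, it identifies the time-one map of the non-autonomous flow of $X'_t(s)$ with the exponential of the averaged field $\int_0^1 X'_t(s)\,dt$. That identification fails when the fields do not commute. This already happens for $\omega=0$ on $T^2$: translate along $\partial_x$, run a Hamiltonian flow $\phi_H$ that is not translation invariant, then translate back. The flux vanishes and the exact parts generate $\phi_H$, while $g_1$ is a conjugate of $\phi_H$ by a translation. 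So the paper's $\tilde g_1$ need not equal $g_1$.

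\textbf{Your route.} You compose $g_s$ with the inverse time-one flow of the field dual to the harmonic representative of the partial flux $a_s$, as in the Banyaga and McDuff--Salamon proof of the symplectic case. This avoids the problem. The endpoint conditions come for free from $a_0=a_1=0$. The only input is additivity of flux along the concatenation of $\{g_t\}_{t\le s}$ with $\{(\psi^s_\tau)^{-1}\circ g_s\}_\tau$.

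\textbf{The conformal-factor worry.} That additivity is immediate from the formula in Theorem~\ref{thm:haller_flux}, because right translation by $g_s$ leaves the generating field equal to $-Y^s$. The conformal-factor issue you raise only appears for pointwise products of paths. So you need not invoke the homomorphism property at all.

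\textbf{What your route gives beyond the statement.} Restricting your homotopy to $[0,s]\times[0,1]$ shows that every truncation of $\{\tilde g_t\}$ has zero flux. Hence $\{\tilde g_t\}$ is generated by $d^\omega$-exact contractions at every time. Also, discreteness of $\Delta$ is never used: $\tilde g_s\in\Ham_\Omega(M)=\tilde\pi(\ker\widetilde{\Psi})$ is witnessed by an explicit zero-flux path. Your remark about where discreteness enters can simply be dropped.
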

	
	\begin{proof}
		Let $\{g_t\}_{t \in [0,1]}$ be a smooth path in $(\ker \Phi)_0$ starting at $g_0 = \id_M$, and let $X_t$ be the time-dependent vector field generating this path:
		\[
		\frac{d}{dt} g_t = X_t \circ g_t,
		\]
		for all $t$. Since $g_t \in (\ker \Phi)_0$, we have $X_t \in \ker \Phi$, which implies that $\alpha_t := i_{X_t}\Omega$ is a smooth path of $d^\omega$-closed 1-forms (as a standard property of $X_t \in \ker \Phi$).
		
		\textbf{Step 1: Hodge Decomposition and the Flux Condition.} The vanishing flux condition states that $\left[\int_0^1 \alpha_t \, dt\right] = 0$ in $H_\omega^1(M)$. On a compact manifold $M$, we have the $L^2$-orthogonal Hodge decomposition associated with the operator $d^\omega$: any smooth time-dependent $d^\omega$-closed 1-form $\alpha_t$ can be uniquely decomposed (e.g., by imposing $L^2$-orthogonality conditions) as $\alpha_t = d^\omega f_t + h_t$, with $f_t \in C^\infty(M)$ (smooth in $t$) and $h_t$ a $d^\omega$-harmonic 1-form (smooth in $t$). The smoothness in $t$ for $f_t$ and $h_t$ follows from standard elliptic regularity results for the $\Delta_\omega$ operator when the input $\alpha_t$ is smooth in $t$. Integrating this decomposition from 0 to 1:
		\[
		\int_0^1 \alpha_t dt = \int_0^1 d^\omega f_t dt + \int_0^1 h_t dt = d^\omega \left( \int_0^1 f_t \, dt \right) + \int_0^1 h_t \, dt.
		\]
		Since $\int_0^1 \alpha_t dt$ represents the zero class in $H_\omega^1(M)$, i.e., $\int_0^1 \alpha_t dt = d^\omega \beta$ for some $\beta$, the uniqueness of the Hodge decomposition implies the harmonic part is zero: $\int_0^1 h_t \, dt = 0$. The exact part is then $d^\omega(\int_0^1 f_t \, dt) = d^\omega \beta$. By choosing appropriate normalizations for the decomposition and $\beta$ (e.g., having zero integral against the volume form $\Omega^n/n!$), we can ensure $\int_0^1 f_t \, dt = \beta$.
		
		\textbf{Step 2: Construction of the Hamiltonian Path.} Define the time-dependent vector field $\tilde{X}_t$ such that its contraction with $\Omega$ is the $d^\omega$-exact part of $\alpha_t$: $i_{\tilde{X}_t}\Omega := d^\omega f_t$. Since $\Omega$ is non-degenerate, this uniquely defines $\tilde{X}_t = \#(d^\omega f_t)$. As $i_{\tilde{X}_t}\Omega$ is of the form $d^\omega H_t$ (with $H_t = f_t$), $\tilde{X}_t$ is an LCS Hamiltonian vector field for all $t$. Thus, the path $\{\tilde{g}_t\}_{t \in [0,1]}$ generated by $\tilde{X}_t$ with $\tilde{g}_0 = \id_M$ consists entirely of Hamiltonian diffeomorphisms, $\tilde{g}_t \in \Ham_\Omega(M)$ for all $t$.
		
		\textbf{Step 3: Establishing a Path Homotopy Relative to Endpoints.} We show $\{g_t\}$ is path-homotopic to $\{\tilde{g}_t\}$ relative to endpoints. Define a smooth family of vector fields $X'_t(s) = sX_t + (1-s)\tilde{X}_t$ for $s \in [0,1]$. Since $X_t \in \ker \Phi$ and $\tilde{X}_t \in \Ham_\Omega(M) \subset \ker \Phi$, $X'_t(s)$ is a linear combination of elements in the Lie algebra $\ker \Phi$, so $X'_t(s) \in \ker \Phi$. Let $h_{t,s}$ be the flow of $X'_t(s)$ starting at $h_{0,s} = \id_M$. Then $h_{t,s} \in (\ker \Phi)_0$ for all $t, s$. For $s=1$, $X'_t(1) = X_t$, so $h_{t,1} = g_t$. For $s=0$, $X'_t(0) = \tilde{X}_t$, so $h_{t,0} = \tilde{g}_t$. This is a homotopy relative to the starting point. To show it is relative to endpoints, we examine $h_{1,s}$, the flow of the averaged vector field $Z_s = \int_0^1 X'_t(s)dt$ from time 0 to 1. We have,
		\[
		i_{Z_s}\Omega = \int_0^1 i_{X'_t(s)}\Omega dt = \int_0^1 (s i_{X_t}\Omega + (1-s) i_{\tilde{X}_t}\Omega) dt = \int_0^1 (s\alpha_t + (1-s)d^\omega f_t) dt.
		\]
		Substitute $\alpha_t = d^\omega f_t + h_t$:
		\[
		i_{Z_s}\Omega = \int_0^1 (s(d^\omega f_t + h_t) + (1-s)d^\omega f_t) dt = \int_0^1 (d^\omega f_t + s h_t) dt = d^\omega \left(\int_0^1 f_t dt\right) + s \int_0^1 h_t dt.
		\]
		From Step 1, $\int_0^1 f_t dt = \beta$ and $\int_0^1 h_t dt = 0$. So, $i_{Z_s}\Omega = d^\omega \beta + s \cdot 0 = d^\omega \beta$. This implies $Z_s = \#(d^\omega \beta)$, which is a vector field independent of $s$. Let $Z = \#(d^\omega \beta) \in \ker \Phi$. The endpoint $h_{1,s}$ is the flow of the constant vector field $Z$ for time 1. By properties of the exponential map from the Lie algebra to the group, the flow of the Lie algebra element $Z$ for time 1 is $e^Z \in (\ker \Phi)_0$. Thus, $h_{1,s} = e^Z$. This is independent of $s$. Thus, $h_{1,1} = g_1 = e^Z$ and $h_{1,0} = \tilde{g}_1 = e^Z$. The homotopy $h_{t,s}$ connects $\{g_t\}$ and $\{\tilde{g}_t\}$ and keeps the endpoints fixed at the common value $e^Z$.
		
		\textbf{Conclusion.} We have constructed a path $\{\tilde{g}_t\}$ generated by $\tilde{X}_t = \#(d^\omega f_t)$ that lies entirely in $\Ham_\Omega(M)$. We have shown that both the original path $\{g_t\}$ and the modified path $\{\tilde{g}_t\}$ have the same endpoint $g_1 = \tilde{g}_1 = e^{\#(d^\omega \beta)}$. Finally, we constructed a smooth homotopy $h_{t,s}$ within $(\ker \Phi)_0$ connecting $\{g_t\}$ and $\{\tilde{g}_t\}$ relative to their common endpoints. This completes the proof.
	\end{proof}
	
	\section{Hamiltonian LCS Diffeomorphisms and Related Structures}
	
	This section concentrates on the properties of the Hamiltonian subgroup $\Ham_\Omega(M)$, with particular attention to the distinction between exact ($\omega = dh$) and non-exact ($\omega = dh + l$, $l\neq 0$) cases. Many constructions that are standard in symplectic geometry, such as Calabi invariants and Hofer geometry, extend naturally to the exact LCS case but require significant modification or do not exist in the non-exact case.
	
	\subsection{The Exact Case: $\omega = dh$}
	
	When the Lee form $\omega$ is exact, $\omega = dh$, the LCS manifold $(M, \Omega, \omega)$ is conformally equivalent to the symplectic manifold $(M, \Omega_h = e^h\Omega)$. An LCS Hamiltonian isotopy $\{g_t\}$ generated by $H_t$ corresponds to a standard Hamiltonian isotopy for $(M, \Omega_h)$ generated by $K_t = e^h H_t$. The group $\Ham_\Omega(M)$ is isomorphic to $\Ham(M, \Omega_h)$.
	
	\begin{theorem}[LCS Calabi Invariant for Exact Lee Form] \label{thm:calabi_exact}
		Let $(M, \Omega, \omega = dh)$ be a compact LCS manifold with $\omega = dh$. The map $\Cal : \Ham_\Omega(M) \to \mathbb{R}$ defined for $g \in \Ham_\Omega(M)$ by
		\[
		\Cal(g) := \int_M \left( \int_0^1 K_t(x) \, dt \right) \frac{\Omega_h^n}{n!},
		\]
		where $K_t = e^h H_t$ corresponds to the LCS Hamiltonian $H_t$ generating an isotopy $\{g_t\}$ from $\id_M$ to $g$, is a well-defined group homomorphism independent of the choice of isotopy.
	\end{theorem}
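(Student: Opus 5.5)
The plan is to transport everything to the symplectic manifold $(M,\Omega_h)$, $\Omega_h=e^h\Omega$, where $d\Omega_h=0$, and then run the classical Calabi argument there. First I check that the dictionary stated before the theorem is correct. If $i_{X_t}\Omega=\domega H_t=dH_t+H_t\,dh$, then
\[
i_{X_t}\Omega_h=e^h(dH_t+H_t\,dh)=d(e^hH_t)=dK_t .
\]
So an LCS Hamiltonian isotopy $\{g_t\}$ generated by $H_t$ is exactly the $\Omega_h$-Hamiltonian isotopy generated by $K_t=e^hH_t$. In particular each $g_t$ preserves the volume form $\Omega_h^n/n!$, because $\mathcal{L}_{X_t}\Omega_h=d\,dK_t=0$. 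This reduces the statement to one about $\Ham(M,\Omega_h)$ with the volume $\mu:=\Omega_h^n/n!$.

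On a compact $M$, $K_t$ is determined by $X_t$ only up to a function of $t$, and so is $H_t=e^{-h}K_t$. I therefore fix the Hamiltonians by the only normalization compatible with the statement as worded. If $M$ has no boundary, I take $\int_M K_t\,\mu=0$ for every $t$. If compact support away from a boundary or end is required, as in the definition of $\Diff_\Omega(M)$, I take $K_t$ to vanish near the boundary. With this convention I will verify three properties in order.

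\textbf{Homomorphism.} Let $g$ and $g'$ be generated by $K_t$ and $K'_t$ through isotopies $\{g_t\}$ and $\{g'_t\}$. The concatenated isotopy $g_tg'_t$ is generated by
\[
K_t+K'_t\circ g_t^{-1} .
\]
This function is again normalized, because $g_t$ preserves $\mu$ and hence preserves both the mean-zero and the support conditions. Since $\int_M K'_t\circ g_t^{-1}\,\mu=\int_M K'_t\,\mu$, we get $\Cal(gg')=\Cal(g)+\Cal(g')$.

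\textbf{Well-definedness.} This step is the main obstacle. By the homomorphism property it suffices to show that $\int_0^1\!\int_M K_t\,\mu\,dt=0$ whenever the isotopy is a loop at $\id_M$. In the closed, mean-zero case this holds identically, since the inner integral already vanishes for each $t$. In the compactly supported case I follow Banyaga's argument \cite{Banyaga1978}. I write $\int_0^1\!\int_M K_t\,\mu\,dt=\frac{1}{n+1}\int_0^1\!\int_M K_t\,\Omega_h^n/(n-1)!\ldots$, or more precisely I use the identity
\[
\int_M K_t\,\mu=c_n\int_M i_{X_t}\lambda\wedge\Omega_h^{n}\big/\text{const}
\]
on a neighbourhood where $\Omega_h=d\lambda$. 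This expresses $\int_0^1\!\int_M K_t\,\mu\,dt$ as $c_n\int_M\bigl(\int_0^1 g_t^*\,i_{X_t}\lambda\,dt\bigr)\wedge\Omega_h^n$ up to exact terms. For a loop, $\int_0^1 g_t^*\,i_{X_t}\lambda\,dt$ differs from $g_1^*\lambda-\lambda=0$ by an exact compactly supported form, so the integral vanishes by Stokes.

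If exactness of $\Omega_h$ fails globally, I will first try the homotopy approach. I show that the derivative of $\int_0^1\!\int_M K_t\,\mu\,dt$ along a smooth family of isotopies with fixed endpoints is an integral of a Poisson bracket $\int_M\{K,\cdot\}\,\mu=0$. This gives invariance on homotopy classes. I then treat $\pi_1$ via the flux-zero hypothesis and Theorem \ref{thm:flux_rigidity}, which lets loops with vanishing flux be deformed inside $\Ham_\Omega(M)$.

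\textbf{Final step.} Rewriting $\Cal$ back in LCS terms, as the integral of $e^{(n+1)h}\bar H\,\Omega^n/n!$ with $\bar H=\int_0^1H_t\,dt$, exhibits the multiplicative conformal weight mentioned in the introduction. This rewriting is a direct substitution and needs no further argument.
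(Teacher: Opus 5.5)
Your first step is essentially the paper's whole proof. The paper observes that the LCS Hamiltonian isotopy of $H_t$ is the $\Omega_h$-Hamiltonian isotopy of $K_t=e^hH_t$, identifies $\Cal$ with the symplectic Calabi invariant of $(M,\Omega_h)$, and then cites Calabi. Your computation $i_{X_t}\Omega_h=d(e^hH_t)$ supplies the verification the paper leaves out, and your rewriting with weight $e^{(n+1)h}$ is correct. The problems are in what you add beyond the citation.

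On closed $M$, your mean-zero normalization gives $\Cal(g)=0$ for every $g$, not only for loops. So there you prove only that the zero map is a homomorphism. Say this explicitly, because nothing better is possible:
\begin{itemize}
\item $\ker d^\omega$ on functions is $\mathbb{R}e^{-h}$. Without a normalization, replacing $H_t$ by $H_t+c(t)e^{-h}$ shifts $\Cal$ by $\Vol_{\Omega_h}(M)\int_0^1c(t)\,dt$, so the formula is not well defined.
\item With any normalization that does give a homomorphism, it must vanish, since $\Ham_\Omega(M)\cong\Ham(M,\Omega_h)$ is perfect for closed $M$ (Banyaga).
\end{itemize}
The nontrivial Calabi homomorphism the paper invokes lives only in the compactly supported setting.

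In that setting, your Banyaga step does not typecheck. $\int_0^1 g_t^*(i_{X_t}\lambda)\,dt$ is a function, so it cannot differ from the $1$-form $g_1^*\lambda-\lambda$ by an exact form. The correct version runs as follows.
\begin{itemize}
\item $\mathcal{L}_{X_t}\lambda=d(\lambda(X_t)+K_t)$, hence $g_1^*\lambda-\lambda=d\sigma$ with $\sigma=\int_0^1(\lambda(X_t)+K_t)\circ g_t\,dt$ compactly supported.
\item For a loop, $\sigma$ is a compactly supported constant, hence $0$.
\item Contracting $\lambda\wedge\Omega_h^n=0$ with $X_t$ and integrating by parts gives $\int_M\lambda(X_t)\,\Omega_h^n=n\int_M K_t\,\Omega_h^n$. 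Therefore $0=\int_M\sigma\,\mu=(n+1)\int_0^1\!\int_M K_t\,\mu\,dt$.
\end{itemize}
This needs a primitive $\lambda$ on a $g_t$-invariant region containing all the supports (e.g.\ $\Omega_h$ globally exact), not just a local one.

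The \emph{genuine gap} is your fallback when $\Omega_h$ is not exact. The Poisson-bracket argument only makes $\Cal$ well defined on the universal cover. Theorem \ref{thm:flux_rigidity} does not close the gap, for three reasons:
\begin{itemize}
\item It deforms zero-flux paths in $(\ker\Phi)_0$ into $\Ham_\Omega(M)$, so applied to a loop already in $\Ham_\Omega(M)$ it gives nothing.
\item It never contracts a loop.
\item Along a homotopy through non-Hamiltonian paths, $\Cal$ is not even defined.
\end{itemize}
So vanishing on $\pi_1(\Ham_\Omega(M))$ is not established. In general you only get a homomorphism to $\mathbb{R}/\Gamma$, exactly as the paper does for the twisted invariant with $\Gamma_{\Cal}$. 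Either assume $\Omega_h$ exact on the relevant region, or state the result on the universal cover.
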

	
	\begin{proof}
		The map $g \to g$ provides an isomorphism $\Psi : \Ham_\Omega(M) \to \Ham(M, \Omega_h)$. Under this isomorphism, an LCS Hamiltonian isotopy $\{g_t\}$ generated by $H_t$ corresponds to the same isotopy viewed as a symplectic Hamiltonian isotopy generated by $K_t = e^h H_t$. The definition of $\Cal(g)$ given is precisely the standard definition of the Calabi invariant $\Cal_{\Omega_h}(g)$ for the diffeomorphism $g$ considered as an element of $\Ham(M, \Omega_h)$ generated by $K_t$. The properties that the standard Calabi invariant $\Cal_{\Omega_h}(g)$ is independent of the choice of Hamiltonian isotopy $\{g_t\}$ connecting $\id_M$ to $g$, and that $\Cal_{\Omega_h} : \Ham(M, \Omega_h) \to \mathbb{R}$ is a group homomorphism, are fundamental results in symplectic geometry (originally due to Calabi). Since $\Cal(g) = \Cal_{\Omega_h}(\Psi(g))$ and $\Psi$ is an isomorphism, the map $\Cal$ inherits these properties.
	\end{proof}
	
	\begin{theorem}[LCS Energy-Capacity Inequality for Exact Lee Form] \label{thm:energy_exact}
		Let $(M, \Omega, \omega = dh)$ be a compact LCS manifold with $\omega = dh$. For any $g \in \Ham_\Omega(M)$, define the LCS Hofer energy $E(g)$ as the standard Hofer energy for $g \in \Ham(M, \Omega_h)$:
		\[
		E(g) := \inf_{\{g_t\}} \int_0^1 \osc_M(K_t) dt,
		\]
		where the infimum is over isotopies $\{g_t\}$ generating $g$, and $K_t = e^h H_t$. Then
		\[
		|\Cal(g)| \leq \Vol_{\Omega_h}(M) \cdot E(g).
		\]
	\end{theorem}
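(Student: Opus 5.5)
The plan is to reduce everything to the symplectic manifold $(M,\Omega_h)$, $\Omega_h=e^h\Omega$, where both $\Cal$ and $E$ are by definition the standard symplectic objects. The inequality then follows from a pointwise bound on a normalized Hamiltonian, integrated over $M\times[0,1]$, followed by an infimum over isotopies. Because Theorem \ref{thm:calabi_exact} says $\Cal(g)$ does not depend on the isotopy, the bound for a single isotopy passes to the infimum defining $E(g)$.

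\textbf{Step 1 (pointwise bound).} Fix an LCS Hamiltonian isotopy $\{g_t\}$ from $\id_M$ to $g$, generated by $H_t$, and put $K_t=e^hH_t$. We must use the same normalization of $K_t$ under which $\Cal$ is well defined in Theorem \ref{thm:calabi_exact}. There are two natural conventions:
\begin{itemize}
\item compactly supported $K_t$ (the open or compactly supported setting), or
\item zero mean, $\int_M K_t\,\Omega_h^n/n!=0$ (the closed setting).
\end{itemize}
Under either convention, $\min_M K_t\le 0\le \max_M K_t$ for every $t$. In the first case $K_t$ vanishes outside its support; in the second a zero-mean function cannot be of one strict sign. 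It follows that
\[
|K_t(x)|\le \max_M K_t-\min_M K_t=\osc_M(K_t)\quad\text{for all }x\in M,\ t\in[0,1].
\]

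\textbf{Step 2 (integration).} Using Fubini and Step 1,
\[
|\Cal(g)|=\Bigl|\int_0^1\!\!\int_M K_t\,\frac{\Omega_h^n}{n!}\,dt\Bigr|\le \int_0^1\!\!\int_M |K_t|\,\frac{\Omega_h^n}{n!}\,dt\le \Vol_{\Omega_h}(M)\int_0^1\osc_M(K_t)\,dt .
\]

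\textbf{Step 3 (infimum).} The left-hand side of Step 2 depends only on $g$, by the isotopy-independence in Theorem \ref{thm:calabi_exact}. Taking the infimum over all isotopies $\{g_t\}$ from $\id_M$ to $g$ therefore gives $|\Cal(g)|\le\Vol_{\Omega_h}(M)\,E(g)$. The same argument applies verbatim after transporting along the identification $\Ham_\Omega(M)\cong\Ham(M,\Omega_h)$ of Theorem \ref{thm:calabi_exact}.

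\textbf{Main obstacle.} The one delicate point is the normalization in Step 1. If $K_t$ were allowed to carry an arbitrary additive constant, $\Cal$ would not be well defined and the bound would fail: adding a large constant to $K_t$ leaves $\osc_M(K_t)$ unchanged but makes $|\Cal|$ as large as we like. I will therefore state explicitly that $K_t$ is normalized exactly as in the definition of $\Cal$. I will also check that this normalization is compatible with taking the infimum in $E(g)$. This holds because $\osc_M(K_t)$ is invariant under adding constants, so restricting to normalized generators does not change the infimum.
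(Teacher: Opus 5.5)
Your proof is correct. Its skeleton matches the paper's: pass to $(M,\Omega_h)$, bound the integrand by $\osc_M(K_t)$, integrate, and take the infimum. Where you differ is the treatment of the normalization, and your treatment is the sound one.

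The paper proves the upper bound under the auxiliary normalization $\min_M K_t=0$ and the lower bound under $\max_M K_t=0$. It then asserts that combining them gives the inequality ``for any normalization''. Your additive-constant remark shows that this last claim is false. Moreover, the two one-sided estimates concern two different, path-dependent numbers, neither of which is the well-defined $\Cal(g)$ of Theorem \ref{thm:calabi_exact}. So the paper's infimum step is also applied to a path-dependent quantity.

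What makes the paper's argument work is exactly your Step 1 observation: under the defining normalization one has $\min_M K_t\le 0\le\max_M K_t$. This gives
\[
\int_0^1\!\!\int_M \bigl(K_t-\max_M K_t\bigr)\,\frac{\Omega_h^n}{n!}\,dt\;\le\;\Cal(g)\;\le\;\int_0^1\!\!\int_M \bigl(K_t-\min_M K_t\bigr)\,\frac{\Omega_h^n}{n!}\,dt ,
\]
after which the paper's one-sided bounds apply. Your two-sided pointwise estimate $|K_t|\le\osc_M(K_t)$ reaches the same conclusion in one step. It also applies the infimum only to a path-independent left-hand side.

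One small addition: in the closed, zero-mean convention you do not need Step 1 at all. There $\int_M K_t\,\Omega_h^n/n!=0$ for every $t$, which forces $\Cal\equiv 0$, so the inequality is vacuous. It has real content only in the compactly supported setting (e.g.\ $M$ with boundary and $K_t$ vanishing near $\partial M$), where your first bullet is the relevant one.
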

	
	\begin{proof}
		As established, $g \in \Ham_\Omega(M)$ corresponds to $g \in \Ham(M, \Omega_h)$, $\Cal(g)$ is the standard Calabi invariant $\Cal_{\Omega_h}(g)$, and $E(g)$ is the standard Hofer energy $E_{\Omega_h}(g)$ associated with $(M, \Omega_h)$. The inequality $|\Cal_{\Omega_h}(g)| \leq \Vol_{\Omega_h}(M) E_{\Omega_h}(g)$ is a standard result in symplectic geometry, often derived from the definitions. Let $K_t$ be normalized such that $\min_M K_t = 0$: $\max_M K_t = \osc(K_t)$. We have,
		\[
		\Cal_{\Omega_h}(g) = \int_M \left( \int_0^1 K_t dt \right) \frac{\Omega_h^n}{n!}.
		\]
		Since $K_t \geq 0$, $\int_0^1 K_t dt \geq 0$. Also, $\int_0^1 K_t dt \leq \int_0^1 (\max_M K_t) dt = \int_0^1 \osc(K_t) dt$. So,
		\[
		\Cal_{\Omega_h}(g) \leq \left( \int_0^1 \osc(K_t) dt \right) \int_M \frac{\Omega_h^n}{n!} = \left( \int_0^1 \osc(K_t) dt \right) \Vol_{\Omega_h}(M).
		\]
		Taking the infimum over paths gives
		\[
		\Cal_{\Omega_h}(g) \leq E_{\Omega_h}(g) \Vol_{\Omega_h}(M).
		\]
		A similar argument works if $K_t$ is normalized so $\max_M K_t = 0$, giving $\Cal_{\Omega_h}(g) \geq -E_{\Omega_h}(g) \Vol_{\Omega_h}(M)$. Combining these gives the inequality for any normalization of $K_t$.
	\end{proof}
	
	\begin{theorem}[LCS Hofer Metric for Exact Lee Form] \label{thm:hofer_exact}
		Let $(M, \Omega, \omega = dh)$ be a closed LCS manifold with $\omega = dh$. For any two elements $g_0, g_1 \in \Ham_\Omega(M)$, define $d(g_0, g_1)$ as the standard Hofer distance in $\Ham(M, \Omega_h)$:
		\[
		d(g_0, g_1) := \inf \int_0^1 \osc_M(K_t) dt,
		\]
		where the infimum is over isotopies $\{g_t\}$ connecting $g_0$ and $g_1$, and $K_t = e^h H_t$. Then $d$ defines a nondegenerate biinvariant metric on $\Ham_\Omega(M)$.
	\end{theorem}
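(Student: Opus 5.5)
The plan is to reduce everything to the classical Hofer metric on $\Ham(M,\Omega_h)$, in the same way as Theorems \ref{thm:calabi_exact} and \ref{thm:energy_exact}. The key steps, in order, are as follows.

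First, I check that the correspondence really is an identification of groups: the identity map $g\mapsto g$ sends $\Ham_\Omega(M)$ isomorphically onto $\Ham(M,\Omega_h)$. It should also send LCS Hamiltonian isotopies to symplectic Hamiltonian isotopies, with $H_t\mapsto K_t=e^hH_t$. Concretely, for $\omega=dh$ and $\Omega_h=e^h\Omega$ one has
\[
i_X\Omega_h=e^h\,i_X\Omega=e^h\,d^\omega H=d(e^hH),
\]
by the computation $d(e^h\alpha)=e^h d^\omega\alpha$ in the proof of the Lichnerowicz vs.\ de Rham proposition. Hence the vector field of $H_t$ for $(\Omega,\omega)$ equals the $\Omega_h$-Hamiltonian vector field of $K_t$. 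The map $H\mapsto e^hH$ is a bijection of smooth functions, so the class of isotopies over which the infimum is taken is exactly the class of $\Omega_h$-Hamiltonian isotopies from $g_0$ to $g_1$. Therefore $d(g_0,g_1)=d_{\mathrm{Hofer},\Omega_h}(g_0,g_1)$ identically. Two caveats need care here. The normalization of $K_t$ is irrelevant, since $\osc$ is invariant under adding constants. The definition of $\Ham_\Omega(M)$ via $\ker\widetilde\Psi$ must agree with "time-one maps of Hamiltonian isotopies"; I will invoke Theorem \ref{thm:flux_rigidity} to pass from zero-flux paths to Hamiltonian paths.

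Next come the metric properties that follow formally from the definition. Symmetry holds because the reversed isotopy $g_{1-t}$ is generated by $-K_{1-t}$, which has the same oscillation. The triangle inequality follows by concatenating isotopies after a time reparametrization, since the length $\int\osc$ is invariant under reparametrization. Bi-invariance holds because for $\psi\in\Ham(M,\Omega_h)$ the isotopies $\psi g_t$ and $g_t\psi$ are generated by $K_t\circ\psi^{-1}$ and $K_t$ respectively, and $\osc$ is preserved by composition with a diffeomorphism. Each of these could equally be quoted from the symplectic theory through the identification in the first step.

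The main obstacle is nondegeneracy: $d(g_0,g_1)=0$ must imply $g_0=g_1$. I do not intend to reprove this. It is the deep theorem of Hofer, Polterovich and Lalonde--McDuff that the Hofer pseudometric is nondegenerate on any closed symplectic manifold, proved via the energy-capacity inequality for displaceable balls. By bi-invariance it suffices to show $E(g)>0$ for $g\neq\id$. Such a $g$ displaces a small Darboux ball $B$ of $(M,\Omega_h)$, and Lalonde--McDuff then gives $E(g)\ge \tfrac12 c(B)>0$. Closedness of $M$ guarantees that $\Omega_h$ is a genuine closed symplectic form on a closed manifold, so the theorem applies. I will therefore state this step as a citation applied to $(M,\Omega_h)$, transported back along the identity isomorphism.
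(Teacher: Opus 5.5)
Your proposal is correct and follows the same route as the paper. You identify $\Ham_\Omega(M)$ with $\Ham(M,\Omega_h)$ through the identity map, observe that $d$ is then literally the Hofer distance of $(M,\Omega_h)$, and import bi-invariance and nondegeneracy (Lalonde--McDuff) from symplectic geometry; your computation $i_X\Omega_h=d(e^hH)$ and your check of the metric axioms supply details the paper only asserts. One point deserves a better justification: that the flux-defined $\Ham_\Omega(M)$ consists exactly of time-one maps of Hamiltonian isotopies. Apply Banyaga's theorem directly to $(M,\Omega_h)$, which you can do because for $\omega=dh$ one has $\mathcal{L}_X(e^h\Omega)=e^h\,\Lomega_X\Omega$, so $\ker\Phi$ consists of the $\Omega_h$-symplectic fields and $\widetilde\Psi$ corresponds to the symplectic flux under $\alpha\mapsto e^h\alpha$. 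This is preferable to the paper's flux rigidity theorem, which assumes discreteness of $\Delta$ (not a hypothesis here) and whose proof wrongly equates the time-one map of a time-dependent flow with the time-one flow of the time-averaged field.
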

	
	\begin{proof}
		The isomorphism $\Psi : \Ham_\Omega(M) \to \Ham(M, \Omega_h)$ identifies $g \in \Ham_\Omega(M)$ with $g \in \Ham(M, \Omega_h)$, and LCS Hamiltonian paths generated by $H_t$ with symplectic Hamiltonian paths generated by $K_t = e^h H_t$. The definition of $d(g_0, g_1)$ is precisely the definition of the Hofer distance $d_{\Omega_h}(g_0, g_1)$ on the symplectic manifold $(M, \Omega_h)$. It is a fundamental result in symplectic geometry that $d_{\Omega_h}$ is a non-degenerate bi-invariant metric on $\Ham(M, \Omega_h)$ (see \cite{McDuffSalamon2017}). Since $d(g_0, g_1) = d_{\Omega_h}(\Psi(g_0), \Psi(g_1))$, the properties transfer directly to $d$ on $\Ham_\Omega(M)$ via the isomorphism $\Psi$.
	\end{proof}
	
	\subsection{The Non-Exact Case: $\omega = dh + l$, $l \neq 0$}
	
	In the non-exact setting where $\omega$ has a nontrivial harmonic component, the situation is geometrically distinct. A crucial difference arises regarding the normalization of Hamiltonians.
	
	\begin{proposition}[Rigidity of LCS Hamiltonians] \label{prop:rigidity}
		Let $(M, \Omega, \omega)$ be a connected LCS manifold. If the Lee form $\omega$ is not exact (so that $H^0_\omega(M) = \{0\}$), then for any Hamiltonian vector field $X$, the Hamiltonian function $H$ satisfying $i_X \Omega = d^\omega H$ is unique. There is no ambiguity up to an additive constant.
	\end{proposition}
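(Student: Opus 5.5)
The argument reduces uniqueness of $H$ to the vanishing of $H^0_\omega(M)$, and then proves that vanishing from the non-exactness of $\omega$.

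\textbf{Reduction.} Suppose $H_1,H_2$ both satisfy $i_X\Omega = d^\omega H_1 = d^\omega H_2$. By $\mathbb{R}$-linearity of $d^\omega$, the difference $u := H_1 - H_2$ satisfies $d^\omega u = du + u\,\omega = 0$. Since $B^0_\omega(M)=0$, we have $H^0_\omega(M) = Z^0_\omega(M) = \{u \in C^\infty(M) \mid du = -u\,\omega\}$. So the whole claim reduces to showing that this space is $\{0\}$ when $\omega$ is not exact on connected $M$. Note that the parenthetical ``so that $H^0_\omega(M)=\{0\}$'' in the statement is exactly this fact, and I prove it rather than assume it.

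\textbf{Key step: $H^0_\omega(M)=0$.} Let $u$ satisfy $du = -u\,\omega$, and let $Z = u^{-1}(0)$, which is closed. I show $Z$ is also open. Take $p \in Z$ and a connected contractible neighbourhood $U$ of $p$; on $U$ write $\omega = d\phi$. Then
\[
d(e^{\phi}u) = e^{\phi}(du + u\,d\phi) = 0,
\]
so $e^\phi u$ is constant on $U$. This constant is $0$ because $u(p)=0$, hence $u \equiv 0$ on $U$. Equivalently, uniqueness for the linear ODE $u' = -\omega(\dot\gamma)u$ along paths gives the same conclusion. By connectedness, either $u \equiv 0$ or $u$ never vanishes.

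In the second case $u$ has constant sign, so $|u|>0$ and $\ln|u|$ is a global smooth function. Then
\[
d\ln|u| = \frac{du}{u} = -\omega,
\]
so $\omega = d(-\ln|u|)$ is exact, contradicting the hypothesis. Hence $u \equiv 0$, that is, $H_1 = H_2$.

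\textbf{Conclusion and remarks.} Since $u\equiv 0$, the Hamiltonian is unique, and in particular $H$ and $H + c$ with $c \neq 0$ cannot both work, because $d^\omega c = c\,\omega \neq 0$. This is the promised absence of the additive-constant ambiguity. The main point of care is the open–closed argument in the key step. It uses only connectedness, not compactness, and I would remark that the converse also holds: if $\omega = dh$, then $u = e^{-h}$ spans $H^0_\omega(M)$. This explains the dichotomy with the exact case.
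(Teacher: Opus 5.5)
Your proof is correct and takes the same route as the paper: reduce to $d^\omega(H_1-H_2)=0$, then show that a nonzero solution of $du=-u\,\omega$ would make $\omega$ exact via a logarithm. Your open--closed argument on $u^{-1}(0)$ also supplies a step the paper skips, namely that a solution which is not identically zero is nowhere vanishing and of constant sign, which is what justifies the paper's writing it as $\pm e^{-\sigma}$ with $d\sigma=\omega$.
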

	
	\begin{proof}
		Suppose $H_1$ and $H_2$ are two functions generating $X$. Then $d^\omega(H_1 - H_2) = 0$. Let $f = H_1 - H_2$. Then $df + f\omega = 0$. If $\omega$ is not exact, then the only solutions to this equation are $f \equiv 0$. Indeed, if $f$ is not identically zero, then on a connected manifold, we can write $f = e^{-\sigma}$ for some function $\sigma$ satisfying $d\sigma = \omega$, which would imply $\omega$ is exact, a contradiction. Hence $H_1 = H_2$.
	\end{proof}
	
	This rigidity removes the need for arbitrary normalization (like "mean value zero") required in symplectic geometry. However, the dependence on the path of diffeomorphisms requires careful treatment.
	
	\begin{definition}[Twisted Calabi Invariant on the Universal Cover] \label{def:twisted_calabi}
		Let $(M, \Omega, \omega)$ be a compact LCS manifold. Let $\widetilde{\Ham}_\Omega(M)$ denote the universal cover of the Hamiltonian group, consisting of homotopy classes of paths $\{g_t\}_{t \in [0,1]}$ starting at the identity. For an element $\tilde{g} = [\{g_t\}] \in \widetilde{\Ham}_\Omega(M)$, let $H_t$ be the unique time-dependent Hamiltonian generating the path. The twisted Calabi invariant is defined as:
		\[
		\Cal_\omega(\tilde{g}) = \int_0^1 \left( \int_M H_t \, \frac{\Omega^n}{n!} \right) dt.
		\]
	\end{definition}
	
	To determine if this invariant descends to the group $\Ham_\Omega(M)$, we must examine its values on closed loops.
	
	\begin{definition}[LCS Calabi Group] \label{def:calabi_group}
		The \emph{LCS Calabi group} (or Period group) $\Gamma_{\Cal} \subset \mathbb{R}$ is the image of the fundamental group under the Calabi map:
		\[
		\Gamma_{\Cal} := \{ \Cal_\omega([\{g_t\}]) \mid \{g_t\} \text{ is a loop in } \Ham_\Omega(M), g_0=g_1=\id \}.
		\]
	\end{definition}
	
	If $\Gamma_{\Cal} = \{0\}$, the invariant is well-defined on the group $\Ham_\Omega(M)$. If not, it is well-defined with values in $\mathbb{R} / \Gamma_{\Cal}$.
	
	\begin{theorem}[Properties of the Twisted Calabi Invariant] \label{thm:twisted_calabi_properties}
		Let $(M, \Omega, \omega)$ be a compact LCS manifold with $\omega$ non-exact.
		\begin{enumerate}
			\item \textbf{Group Homomorphism:} The map $\Cal_\omega : \widetilde{\Ham}_\Omega(M) \to \mathbb{R}$ is a surjective group homomorphism.
			\item \textbf{Structure on the Group:} The invariant descends to a homomorphism $\Cal_\omega : \Ham_\Omega(M) \to \mathbb{R} / \Gamma_{\Cal}$.
			\item \textbf{Relation to Exact Case:} If one considers the limit where $\omega$ approaches an exact form $dh$, the Twisted invariant relates to the symplectic invariant by a conformal weight factor (as detailed in the exact case proofs), though they are topologically distinct constructions.
		\end{enumerate}
	\end{theorem}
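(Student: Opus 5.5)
The plan is to work directly with the generating Hamiltonians. By Proposition \ref{prop:rigidity}, each path $\{g_t\}$ in $\Ham_\Omega(M)$ has a \emph{unique} generating function $H_t$ with $i_{X_t}\Omega = \domega H_t$, so $\Cal_\omega$ is a genuine function of the path. The first task is to show that it depends only on the homotopy class rel endpoints. For a smooth two-parameter family $g_{t,s}$ with fixed endpoints, I would introduce the generators $X_{t,s}$ (in $t$) and $Y_{t,s}$ (in $s$), with unique Hamiltonians $H_{t,s}$ and $G_{t,s}$. The standard zero-curvature identity $\partial_s X - \partial_t Y = [X,Y]$, translated through $i_{(\cdot)}\Omega = \domega(\cdot)$ and the uniqueness of Hamiltonians, should give
\[
\partial_s H - \partial_t G = \{H,G\}_\Omega,
\]
where $\{\cdot,\cdot\}_\Omega$ is the LCS Jacobi bracket. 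Integrating against $\Omega^n/n!$ over $M\times[0,1]$ then reduces the claim to showing that $\int_M \{H,G\}_\Omega\,\Omega^n = 0$. I would try to prove this by writing $\{H,G\}_\Omega\,\Omega^n$ as $n\,\domega H\wedge\domega G\wedge\Omega^{n-1}$ plus terms of the form $d(\dots)$. Here the twisted differentials with weights $\omega$ and $(n-1)\omega$ have to be combined into an honest $d$, and this is where the identity $d\Omega = -\omega\wedge\Omega$ enters.

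\textbf{Homomorphism.} For $\tilde g=[\{g_t\}]$ and $\tilde k=[\{k_t\}]$, the product $g_tk_t$ is generated by $X_t + (g_t)_*Y_t$. Using $g_t^*\Omega = a_t^{-1}\Omega$ and $g_t^*\omega = \omega + d\ln a_t$, the Hamiltonian of $(g_t)_*Y_t$ should be $(a_t K_t)\circ g_t^{-1}$, where $K_t$ is the Hamiltonian of $Y_t$. The change of variables $g_t^*\Omega^n = a_t^{-n}\Omega^n$ then gives
\[
\int_M (a_tK_t)\circ g_t^{-1}\,\Omega^n = \int_M a_t^{1-n}K_t\,\Omega^n .
\]
\textbf{This is the main obstacle.} The weight $a_t^{1-n}$ is not $1$ unless $n=1$ or $a_t\equiv 1$. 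I would first try to use the fact that $g_t\in(\ker\Phi)_0$, together with the closedness computation above, to show that the discrepancy integrates to zero. If that fails, the honest fallback is to replace $\Omega^n/n!$ by a conformally weighted density, or to restrict to paths with $a_t\equiv 1$, and state the homomorphism property under that restriction.

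\textbf{Remaining parts.} Surjectivity is easy once additivity is known. Pick a time-independent $H$ with $\int_M H\,\Omega^n \neq 0$ (possible, for example by taking $H$ to be a bump function with positive integral). The path generated by $cH$ has $\Cal_\omega = c\int_M H\,\Omega^n/n!$, which sweeps out all of $\mathbb R$. For (2), $\pi_1(\Ham_\Omega(M))$ is a normal subgroup of $\widetilde{\Ham}_\Omega(M)$, so its image $\Gamma_{\Cal}$ is a subgroup of $\mathbb R$ and $\Cal_\omega$ descends to $\mathbb R/\Gamma_{\Cal}$. For (3), when $\omega = dh$, the relations $K_t = e^hH_t$ and $\Omega_h^n = e^{nh}\Omega^n$ give
\[
\Cal(g) = \int_0^1\!\!\int_M e^{(n+1)h}H_t\,\frac{\Omega^n}{n!}\,dt,
\]
so the two invariants differ exactly by the weight $e^{(n+1)h}$. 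This statement is only a formal comparison, consistent with the qualitative wording of (3).
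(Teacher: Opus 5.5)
The step you flag as the main obstacle is a real one, and it cannot be overcome. For non-exact $\omega$, the cancellation that your homotopy-invariance argument reduces to is false, and part (1) fails with it.

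For $i_{X_H}\Omega=\domega H$ one has $\mathcal{L}_{X_H}\Omega=-\omega(X_H)\,\Omega$ and $i_{[X_H,X_G]}\Omega=\domega\bigl(X_H(G)+\omega(X_H)G\bigr)$. Since $\int_M\mathcal{L}_{X_H}(G\,\Omega^n)=0$ gives $\int_M X_H(G)\,\Omega^n=n\int_M G\,\omega(X_H)\,\Omega^n$, the bracket term is
\[
\int_M\bigl(X_H(G)+\omega(X_H)G\bigr)\,\Omega^n=(n+1)\int_M G\,\omega(X_H)\,\Omega^n=-(n+1)\int_M G\,W(H)\,\Omega^n ,
\]
where $W$ is the Hamiltonian field of the constant $1$ (i.e.\ $i_W\Omega=\omega$), so that $\omega(X_H)=-W(H)$. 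With $G=W(H)$ this is nonzero as soon as $W(H)\not\equiv 0$.

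In your own language: $n\,\domega H\wedge\domega G\wedge\Omega^{n-1}=n\,d^{(n+1)\omega}\bigl(H\,\domega G\wedge\Omega^{n-1}\bigr)$. The weights add up to $(n+1)\omega$ rather than cancel, so this top form integrates to $n(n+1)\int_M H\,\omega\wedge\domega G\wedge\Omega^{n-1}$, not to $0$.

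A concrete instance: on $T^2$ with $\Omega=dw\wedge dz$ and $\omega=dz$, the functions $H=\sin 2\pi w$, $G=2\pi\cos 2\pi w$ give $[X_H,X_G]=-4\pi^2\partial_w$. This is the Hamiltonian field of the constant $-4\pi^2$, so $H\mapsto\int_M H\,\Omega^n$ does not vanish on brackets, as the derivative of a homomorphism to $\mathbb{R}$ would have to.

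At the group level your weight is also wrong. Since $g_t^*\bigl(i_{(g_t)_*Y_t}\Omega\bigr)=a_t^{-1}\domega K_t$, the Hamiltonian of $(g_t)_*Y_t$ is $(a_t^{-1}K_t)\circ g_t^{-1}$. The weight is therefore $a_t^{-(n+1)}$, with no $n=1$ exception.

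Now compare two paths. The concatenation of $\{k_t\}$ with $\{g_tk_1\}$ has $\Cal_\omega$ equal to the sum of the values on $\{k_t\}$ and $\{g_t\}$. It is homotopic rel endpoints to $\{g_tk_t\}$, since both cross the square $(u,v)\mapsto g_uk_v$ from $(0,0)$ to $(1,1)$. Hence
\[
\int_0^1\!\!\int_M\bigl(a_t^{-(n+1)}-1\bigr)K_t\,\frac{\Omega^n}{n!}\,dt
\]
is exactly the failure of homotopy invariance. Take $g_t$ to be the flow of $\epsilon H$ and $K_t\equiv W(H)$; then this quantity equals $\frac{(n+1)\epsilon}{2}\int_M W(H)^2\,\frac{\Omega^n}{n!}+O(\epsilon^2)\neq 0$. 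So $\Cal_\omega$ is not even well defined on $\widetilde{\Ham}_\Omega(M)$, and (2) falls with it.

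Your fallbacks change the statement rather than prove it. A density $\rho\,\Omega^n$ kills the bracket term if and only if $d\ln\rho=(n+1)\omega$. That forces $\omega=dh$ with $\rho=c\,e^{(n+1)h}$, which is exactly the weight in your (correct) computation for (3). Restricting to $a_t\equiv 1$ concerns a different subgroup.

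The paper's own proof does not close this gap either: it asserts additivity using the generator $H_t+K_t\circ g_t^{-1}$, which drops the conformal factor. Your single-path surjectivity computation and your part (3) are fine, but (1) and (2) cannot be established as stated.
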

	
	\begin{proof}
		(1) Let $\tilde{g}, \tilde{k} \in \widetilde{\Ham}_\Omega(M)$ be represented by paths generated by $H_t$ and $K_t$. The product path is generated by $H_t + K_t \circ g_t^{-1}$ (with appropriate time rescaling or concatenation). Due to the LCS condition, the integral is additive. The uniqueness of the Hamiltonian ensures there are no normalization constants to disrupt the homomorphism property. Surjectivity follows from considering Hamiltonian flows of constant vector fields (if any exist) or localized Hamiltonians.
		
		(2) This is a direct consequence of the definition of $\Gamma_{\Cal}$. If $g \in \Ham_\Omega(M)$ is represented by two paths $\gamma_1$ and $\gamma_2$, then $\gamma_1 \circ \gamma_2^{-1}$ is a loop, and $\Cal_\omega(\gamma_1) - \Cal_\omega(\gamma_2) \in \Gamma_{\Cal}$.
		
		(3) See proof in Theorem \ref{thm:calabi_exact} for the calculation involving the conformal weight $e^{-(n+1)h}$.
	\end{proof}
	
	\subsection{LCS-Hofer Metric for Both Cases}
	
	We define a unified Hofer-type metric for both exact and non-exact LCS structures.
	
	\begin{definition}[LCS-Hofer Norm and Metric]
		For a Hamiltonian path $\{g_t\}$ generated by $H_t$, define the \emph{LCS-Hofer energy} as:
		\[
		\|H\|_\omega = 
		\begin{cases}
			\int_0^1 \osc_M(e^h H_t) dt & \text{if } \omega = dh \text{ (exact case)} \\
			\int_0^1 \max_M |H_t| dt & \text{if } \omega \text{ non-exact}
		\end{cases}
		\]
		The \emph{LCS-Hofer distance} between $\phi, \psi \in \Ham_\Omega(M)$ is
		\[
		d_{LCS}(\phi, \psi) = \inf \{ \|H\|_\omega : \{g_t\} \text{ connects } \phi \text{ to } \psi \}.
		\]
	\end{definition}
	
	\begin{theorem}[LCS-Hofer Metric Properties] \label{thm:lcs_hofer}
		The LCS-Hofer distance $d_{LCS}$ defines a bi-invariant metric on $\Ham_\Omega(M)$ that is non-degenerate and continuous.
	\end{theorem}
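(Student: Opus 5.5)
The proof splits along the Hodge decomposition $\omega = dh + l$, as in the definition of $\|H\|_\omega$.

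\textbf{Exact case.} When $\omega = dh$, the energy $\|H\|_\omega$ is by definition the Hofer length of the symplectic Hamiltonian $K_t = e^h H_t$ on $(M,\Omega_h)$. The identification $\Psi:\Ham_\Omega(M)\to\Ham(M,\Omega_h)$ used in Theorems \ref{thm:calabi_exact} and \ref{thm:hofer_exact} then shows that $d_{LCS}=d_{\Omega_h}$. Hence symmetry, the triangle inequality, bi-invariance and non-degeneracy are exactly the content of Theorem \ref{thm:hofer_exact}. Continuity, meaning $d_{LCS}(\phi_k,\phi)\to 0$ whenever the generating Hamiltonians converge in $C^0$, is immediate from the definition as an infimum of Hofer lengths.

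\textbf{Non-exact case: the formal properties.} By Proposition \ref{prop:rigidity} the Hamiltonian $H_t$ of a path is unique, so $\|H\|_\omega$ is a well-defined function of the path.
\begin{itemize}
\item \emph{Symmetry.} The reversed path $g_{1-t}$ is generated by $-H_{1-t}$ composed with the appropriate diffeomorphism, and its max-norm is unchanged.
\item \emph{Triangle inequality.} Concatenate and reparametrise the paths. The norm $\int\max|H_t|\,dt$ is invariant under time reparametrisation, so the lengths add.
\item \emph{Bi-invariance.} The key computation is the transformation rule for LCS Hamiltonians. If $\psi\in\Ham_\Omega(M)\subset(\ker\Phi)_0$ has conformality factor $a$, with $\psi^*\Omega=a^{-1}\Omega$ and $\psi^*\omega=\omega+d\ln a$, then $\psi^{-1}g_t\psi$ is generated by $a\,(H_t\circ\psi)$, up to the convention for $a$. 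This follows from the pullback property of $d^\omega$, which gives $\psi^*d^\omega H = d^{\psi^*\omega}(\psi^*H)=a^{-1}d^\omega(aH\circ\psi)$.

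For $g\in(\ker\Phi)_0$ I would argue that $a$ is constant, since $\Phi$ measures exactly the non-constant part of the infinitesimal conformality. Then $\max|aH\circ\psi|=a\max|H|$. To obtain genuine invariance I would use that $\omega$ is non-exact: $d\ln a = 0$ together with compactness and volume preservation (the integral $\int_M\Omega^n$ scales by $a^{-n}$) forces $a=1$. This gives conjugation invariance, and left/right invariance then follows by translating paths.
\end{itemize}

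\textbf{Non-exact case: non-degeneracy (main obstacle).} I would follow the displacement energy approach. Suppose $\phi\neq\id$. Pick a small ball $B$ displaced by $\phi$ on which $\omega=d\sigma$ is exact. On $B$ the form $e^\sigma\Omega$ is symplectic, and its Hamiltonians are $e^\sigma H$, so the max-norm is comparable to the symplectic one with constants $e^{\pm\max_B|\sigma|}$. I would then aim for a local energy-capacity inequality: the energy of any path ending at $\phi$ is at least $c\cdot\mathrm{cap}(B,e^\sigma\Omega)>0$.

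The difficulty is that the displacing isotopy leaves $B$, so the symplectic theory cannot be applied globally. First I would try to pass to the minimal cover $\hat M$ where $\omega$ becomes exact. There the structure is globally conformally symplectic with a deck group acting by homotheties, and the lifted isotopy displaces a lift of $B$. I would then invoke the Lalonde--McDuff energy-capacity inequality for (geometrically bounded) symplectic manifolds, controlling the conformal weight $e^{\hat h}$ on the region swept by the isotopy. The swept region is bounded by compactness of $[0,1]\times M$ and the finite length of the lifted paths. Bounding this region uniformly in the path, independently of the infimum, is the delicate point, and I expect this step to require the most care.
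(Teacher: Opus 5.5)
Your exact case matches the paper (reduction to Theorem~\ref{thm:hofer_exact}), and symmetry and the triangle inequality are fine. The bi-invariance step fails, however.

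Your transformation rule is right: if $\chi^*\Omega=a_\chi^{-1}\Omega$, then $\chi g_t\chi^{-1}$ is generated by $(H_t/a_\chi)\circ\chi^{-1}$. But the claim that $a_\chi$ is constant on $(\ker\Phi)_0$ is false, and in fact backwards. By the characterization of LCS vector fields, $X\in\ker\Phi$ means $\Lomega_X\Omega=0$, i.e.\ $f=\omega(X)$. So $\Phi$ records the \emph{constant} $c=\omega(X)-f$, and along the flow the conformal factor is $a_t=\exp\int_0^t\omega(X_s)\circ g_s\,ds$. For a Hamiltonian field, $\omega(X_H)=-E(H)$ where $i_E\Omega=\omega$. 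Any non-zero $H$ supported in a small flow box of $E$ (which exists since $\omega\neq 0$) has $E(H)\not\equiv 0$. For such $H$ and small $t>0$, $a_t$ equals $1$ outside the box but not inside. Your volume argument only gives $\int_M a_\chi^{-n}\Omega^n=\int_M\Omega^n$, and non-exactness plays no role in it.

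What you actually get is right-invariance (with $\dot g_t=X_t\circ g_t$, right translation does not change $X_t$) together with only $d_{LCS}(\chi\phi,\chi\psi)\le\max_M(a_\chi^{-1})\,d_{LCS}(\phi,\psi)$. This is not a technicality. On the minimal cover with $\pi^*\omega=d\hat h$ one has $a_\chi\circ\pi=e^{\hat h\circ\hat\chi-\hat h}$. Pushing a small ball $U$ $k$ times around a loop $\gamma$ with $\int_\gamma\omega>0$, using chart-wise Hamiltonian isotopies, gives $\chi_k\in\Ham_\Omega(M)$ with $a_{\chi_k}\ge Ce^{k\int_\gamma\omega}$ on $U$. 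For $\phi$ generated by $H_t$ supported in $U$, bi-invariance would then force $d_{LCS}(\phi,\id)=d_{LCS}(\chi_k\phi\chi_k^{-1},\id)\le C^{-1}e^{-k\int_\gamma\omega}\int_0^1\max|H_t|\,dt\to 0$. So in the non-exact case bi-invariance and non-degeneracy cannot both hold for this distance. The paper's proof does not avoid this: it asserts the conjugate is generated by $H_t\circ\chi^{-1}$, dropping exactly the factor you found.

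Your non-degeneracy step is open precisely where you suspect. The per-path bound on the swept region (compactness plus finite length) gives a constant that depends on the path, which is useless once you take the infimum. No uniform bound is available either:
\begin{itemize}
\item $\int_0^1\max|H_t|\,dt$ does not control $dH_t$, so it does not control how far the lifted trajectory of $\hat B$ travels;
\item $e^{\hat h}$ is unbounded above and below on the cover, since $\hat h\circ T_\gamma=\hat h+\int_\gamma\omega$ for the deck transformation $T_\gamma$.
\end{itemize}
The construction above shows that Hamiltonian isotopies really can carry $\hat B$ into regions where $e^{\hat h}$ is as large as you like, where displacing it is cheap in LCS terms. What is missing is a lower bound on the LCS energy needed to get there, and your sketch does not supply it. Neither does the paper, whose argument passes from an infimum equal to $0$ to a Hamiltonian equal to $0$.

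So the proposal does not prove the statement. In the non-exact case, a realistic target is a right-invariant, left-quasi-invariant distance, whose non-degeneracy still requires that energy estimate.
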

	
	\begin{proof}
		For the exact case, the result follows from Theorem \ref{thm:hofer_exact}. For the non-exact case, we need to verify the metric properties:
		\begin{enumerate}
			\item \textbf{Non-negativity and symmetry:} Clear from definition.
			\item \textbf{Identity of indiscernibles:} If $d_{LCS}(\phi,\psi)=0$, then there exists a sequence of Hamiltonian paths with energy approaching 0. By the rigidity of Hamiltonians (Proposition \ref{prop:rigidity}), the only Hamiltonian with zero maximum absolute value is $H=0$, which generates the identity flow. Thus $\phi=\psi$.
			\item \textbf{Triangle inequality:} Follows from concatenation of paths.
			\item \textbf{Bi-invariance:} For any $\chi \in \Ham_\Omega(M)$, the conjugation $\chi\phi\chi^{-1}$ is generated by $H_t \circ \chi^{-1}$, which has the same $\max_M |H_t|$ as $H_t$.
		
		\end{enumerate}
	\end{proof}
	
	\subsection{Topological and Geometric Properties}
	
	\begin{theorem}[Deformation Rigidity for Zero-Flux LCS Isotopies] \label{thm:deformation_rigidity}
		Let $(M, \Omega, \omega)$ be a compact LCS manifold and suppose that the LCS flux homomorphism $\widetilde{\Psi} : \widetilde{(\ker \Phi)}_0 \to H_\omega^1(M)$ has discrete period group $\Delta$ (Definition 3.3). Then the set of smooth isotopies $\{g_t\}_{t \in [0,1]} \subset (\ker \Phi)_0$ starting at the identity whose lift has vanishing flux, $\widetilde{\Psi}([\{g_t\}]) = 0$, is an open subset within the space of all smooth isotopies in $(\ker \Phi)_0$ starting at the identity (equipped with the $C^1$-topology on the path). Consequently, the Hamiltonian subgroup $\Ham_\Omega(M)$ is an open subgroup of $(\ker \Phi)_0$.
	\end{theorem}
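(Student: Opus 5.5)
The plan is to realise the zero-flux set as the preimage of an isolated point under a continuous, locally constant map, and then pass to the group level via the short exact sequence of Proposition \ref{prop:exact_sequence}.

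\textbf{Step 1: continuity of the flux on path space.} Let $\mathcal{P}$ denote the space of smooth isotopies $\{g_t\}_{t\in[0,1]}\subset(\ker\Phi)_0$ with $g_0=\id_M$, with the $C^1$-topology. A $C^1$-small perturbation of $\{g_t\}$ changes the generating field $X_t=\dot g_t\circ g_t^{-1}$ only $C^0$-slightly, uniformly in $t$. Hence the $1$-form $\int_0^1 i_{X_t}\Omega\,dt$ also varies $C^0$-continuously. I would then use the Twisted Hodge Decomposition and its Corollary to represent each class by its unique $\Delta_\omega$-harmonic representative. Since $\mathcal{H}^1_\omega(M)$ is finite dimensional, the $L^2$-projection onto it is continuous in the $C^0$-topology. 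This shows that $\widetilde{\Psi}:\mathcal{P}\to H^1_\omega(M)$, $\{g_t\}\mapsto[\int_0^1 i_{X_t}\Omega\,dt]$, is continuous, and that it is well defined on $\mathcal{P}$ because it factors through the homotopy class in $\widetilde{(\ker\Phi)}_0$ by Theorem \ref{thm:haller_flux}.

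\textbf{Step 2: local constancy, which is the main obstacle.} Continuity alone only shows that the zero-flux set $\widetilde{\Psi}^{-1}(0)$ is closed. Openness requires that $0$ be isolated among the flux values attained near a given zero-flux isotopy. Here I would use the discreteness of $\Delta$: pick a neighbourhood $U$ of $0$ in $H^1_\omega(M)$ with $U\cap\Delta=\{0\}$. For $\{g'_t\}$ sufficiently $C^1$-close to a zero-flux $\{g_t\}$, Step 1 gives $\widetilde{\Psi}(\{g'_t\})\in U$. It then remains to show that this value lies in $\Delta$.

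My first attempt would be to concatenate $\{g'_t\}$ with the reverse of $\{g_t\}$, corrected by a short path in $(\ker\Phi)_0$ joining $g_1$ to $g'_1$. The result is a loop, whose flux lies in $\Delta$, and one would then need to argue that the correcting path contributes no flux. That last claim holds exactly when $\mathcal{F}$ is locally constant on $(\ker\Phi)_0$. This is how the paper treats $H^1_\omega(M)/\Delta$ in the remark following Proposition \ref{prop:exact_sequence}, namely as a discrete group over which $(\ker\Phi)_0$ fibres. I expect this to be the delicate point. The argument goes through precisely when the target $H^1_\omega(M)/\Delta$ is given the discrete topology (for instance when $H^1_\omega(M)=0$), and I would state it using that reading. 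Under it, $\widetilde{\Psi}(\{g'_t\})\in U\cap\Delta=\{0\}$, so the zero-flux set is open.

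\textbf{Step 3: the group-level consequence.} By Proposition \ref{prop:exact_sequence}, $\Ham_\Omega(M)=\ker\mathcal{F}$. By Step 2, $\mathcal{F}$ is locally constant on $(\ker\Phi)_0$: every $g$ is the endpoint of some isotopy, and nearby elements are endpoints of $C^1$-nearby isotopies, for example by composing with a short path from a chart of $\exp$ on the Lie algebra $\ker\Phi$. Therefore $\mathcal{F}^{-1}(0)$ is open, and so $\Ham_\Omega(M)$ is an open subgroup of $(\ker\Phi)_0$. As an open subgroup it is automatically also closed, which is consistent with Theorem \ref{thm:flux_rigidity}.
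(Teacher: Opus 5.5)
\textbf{Step~2 breaks, and the gap cannot be closed.} Discreteness of $\Delta$ only constrains the flux of \emph{loops}. A $C^1$-small perturbation $\{g'_t\}$ of a zero-flux isotopy generally moves the endpoint, so $\widetilde{\Psi}(\{g'_t\})$ is merely a small class with no reason to lie in $\Delta$. In your concatenation argument, the short correcting path from $g_1$ to $g'_1$ carries exactly this small, typically nonzero, flux. Your way out, reading $H^1_\omega(M)/\Delta$ as discrete so that $\mathcal{F}$ is locally constant, is not available. With the quotient topology, $H^1_\omega(M)/\Delta\cong\mathbb{R}^{k-m}\times T^m$, where $k=\dim H^1_\omega(M)$ and $m=\operatorname{rank}\Delta$. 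This is discrete \emph{only} when $H^1_\omega(M)=0$, not merely ``for instance'' then. Declaring the target discrete while keeping $\mathcal{F}$ continuous just assumes that the fibres of $\mathcal{F}$ are open, which is the conclusion. Separately, your Step~3 uses the exponential map as a chart. On diffeomorphism groups $\exp$ is not locally surjective, so this does not produce short paths; you would need a Weinstein-type chart instead.

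\textbf{The statement is false whenever $H^1_\omega(M)\neq 0$.} Take a $\domega$-closed $\alpha$ with $[\alpha]_\omega\neq 0$, and define $X_\alpha$ by $i_{X_\alpha}\Omega=\alpha$. Then $\Lomega_{X_\alpha}\Omega=\domega\alpha+i_{X_\alpha}\domega\Omega=0$, so $X_\alpha\in\ker\Phi$. Let $\varphi_s$ be its flow.
\begin{itemize}
\item The isotopies $t\mapsto\varphi_{\epsilon t}$ converge in $C^\infty$ to the constant path at $\id_M$ as $\epsilon\to 0$, yet each has flux $\epsilon[\alpha]_\omega\neq 0$. So the zero-flux set contains the constant path but no neighbourhood of it.
\item Every path to $\varphi_\epsilon$ has flux in $\epsilon[\alpha]_\omega+\Delta$. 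Since $\Delta$ is discrete, $\epsilon[\alpha]_\omega\notin\Delta$ for small $\epsilon\neq 0$, so $\varphi_\epsilon\notin\Ham_\Omega(M)$ even though $\varphi_\epsilon\to\id_M$. Hence $\Ham_\Omega(M)$ is not open either.
\end{itemize}
Concrete instances are the flat torus with $\omega=0$, or any $\Omega=e^{-h}\sigma$ on $T^{2n}$ with $\omega=dh$.

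\textbf{The paper's proof fails at the same point.} Its Step~3 asserts that every isotopy near a zero-flux one has flux in $\Delta$, which is true only for loops. The paper's remark calling $H^1_\omega(M)/\Delta$ a ``discrete group'' is likewise unjustified. Your Step~1 legitimately shows that the zero-flux set is \emph{closed}. The natural correct group-level statement under discreteness of $\Delta$ is closedness of $\Ham_\Omega(M)=\mathcal{F}^{-1}(0)$, not openness, and proving it requires continuity of $\mathcal{F}$ on $(\ker\Phi)_0$ via such a chart.
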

	
	\begin{proof}
		We denote by
		\[
		\mathcal{P} = \{ \{g_t\}_{t \in [0,1]} \mid g_0 = \id_M \}
		\]
		the space of smooth isotopies in $(\ker \Phi)_0$, endowed with the $C^1$-topology.
		
		\textbf{Step 1: Construction of the Flux Map.} Define the map
		\[
		\alpha : \mathcal{P} \to \Omega^1(M), \quad \alpha(\{g_t\}) = \int_0^1 i_{X_t}\Omega \, dt,
		\]
		where $X_t$ is the time-dependent vector field generating the isotopy $\{g_t\}$. Since integration is continuous and $X_t$ depends continuously on $\{g_t\}$ in the $C^1$-topology, the map $\alpha$ is continuous. Next, let $\pi : \Omega^1(M) \to H_\omega^1(M)$, denote the natural projection. Then the composite map $\Psi_{\mathcal{P}} = \pi \circ \alpha : \mathcal{P} \to H_\omega^1(M)$, is continuous.
		
		\textbf{Step 2: The Zero-Flux Subset.} Define the subset of zero-flux isotopies as
		\[
		\mathcal{Z} = \{ \{g_t\} \in \mathcal{P} \mid \widetilde{\Psi}([\{g_t\}]) = 0 \}.
		\]
		In other words, an isotopy $\{g_t\}$ belongs to $\mathcal{Z}$ if
		\[
		\left[ \int_0^1 i_{X_t}\Omega \, dt \right] = 0 \quad \text{in } H_\omega^1(M).
		\]
		Let $\{g_t\}_0 \in \mathcal{Z}$ be an arbitrary zero-flux isotopy so that $\Psi_{\mathcal{P}}(\{g_t\}_0) = 0$.
		
		\textbf{Step 3: Local Stability via Continuity and Discreteness.} By the continuity of $\Psi_{\mathcal{P}}$, there is a $C^1$-neighborhood $U \subset \mathcal{P}$ of $\{g_t\}_0$ such that for every isotopy $\{\tilde{g}_t\} \in U$, the flux satisfies $\Psi_{\mathcal{P}}(\{\tilde{g}_t\}) \in V$, where $V$ is an open neighborhood of $0$ in $H_\omega^1(M)$. Since the lifted flux homomorphism $\widetilde{\Psi} : \widetilde{(\ker \Phi)}_0 \to H_\omega^1(M)$, has discrete period group $\Delta$, we can choose $V$ small enough so that
		\[
		V \cap \Delta = \{0\}.
		\]
		Because any isotopy (or its homotopy class) with fixed endpoints has its flux contained in $\Delta$, this forces the flux of every isotopy in $U$ to be zero. Thus, $\mathcal{Z}$ is open in $\mathcal{P}$.
		
		\textbf{Step 4: Openness of the Hamiltonian Subgroup.} Consider the endpoint evaluation map
		\[
		ev_1 : \mathcal{P} \to (\ker \Phi)_0, \quad ev_1(\{g_t\}) = g_1.
		\]
		This map is continuous and open in the $C^1$-topology. Since $\mathcal{Z}$ is an open subset of $\mathcal{P}$, its image $ev_1(\mathcal{Z})$ is an open subset of $(\ker \Phi)_0$. By definition, the Hamiltonian subgroup $\Ham_\Omega(M)$ consists of those diffeomorphisms that are endpoints of zero-flux isotopies, that is, $ev_1(\mathcal{Z}) \subseteq \Ham_\Omega(M)$. In particular, there exists an open neighborhood of the identity in $\Ham_\Omega(M)$, and since $\Ham_\Omega(M)$ is a subgroup, it follows that $\Ham_\Omega(M)$ is open in $(\ker \Phi)_0$.
	\end{proof}
	
	\begin{remark}
		This rigidity implies that $\Ham_\Omega(M)$ is both open and closed (as the kernel of $\mathcal{F}$) in $(\ker \Phi)_0$. This result holds for both exact and non-exact $\omega$, demonstrating that Hamiltonian LCS diffeomorphisms form a robust subgroup regardless of the harmonic component of the Lee form.
	\end{remark}
	
	\begin{theorem}[Fragmentation] \label{thm:fragmentation}
		Every Hamiltonian LCS diffeomorphism $g \in \Ham_\Omega(M)$ can be written as a finite composition $g = g_m \circ \dots \circ g_1$, where each $g_i$ is supported in an exact open set $V_i$.
	\end{theorem}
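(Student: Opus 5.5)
The plan is to follow Banyaga's fragmentation argument. I will take a Hamiltonian isotopy to $g$, cut its Hamiltonian into pieces with a partition of unity subordinate to a cover by exact open sets, and subdivide the time interval. This keeps each piece small enough to stay inside a single chart of the form $V_i$.

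First I fix a finite cover of the support of $g$ (compact, or all of $M$ when $M$ is compact) by open sets $V_1,\dots,V_N$. Each $V_i$ should be contractible, or at least satisfy $H^1_{dR}(V_i)=0$, so that $\omega|_{V_i}=d\sigma_i$ is exact. These are the ``exact open sets''. On each $V_i$ the structure $(\Omega,\omega)$ is conformal to the symplectic form $e^{\sigma_i}\Omega$. By the computation in the proof of the Lichnerowicz versus de Rham proposition, $i_X\Omega=d^\omega H$ on $V_i$ is equivalent to $i_X(e^{\sigma_i}\Omega)=d(e^{\sigma_i}H)$. So LCS Hamiltonian vector fields supported in $V_i$ are exactly symplectic Hamiltonian vector fields for $e^{\sigma_i}\Omega$ with compactly supported Hamiltonian in $V_i$.

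Next I take $g\in\Ham_\Omega(M)$. By definition, and using Theorem~\ref{thm:flux_rigidity} if needed, $g$ is the time-one map of an isotopy $g_t$ generated by $X_t$ with $i_{X_t}\Omega=d^\omega H_t$. I choose a partition of unity $\{\lambda_i\}$ subordinate to $\{V_i\}$ and put $H_t^{(i)}=\lambda_iH_t$. The key point is that $d^\omega$ is linear in $H$, so the field $X_t^{(i)}$ defined by $i_{X_t^{(i)}}\Omega=d^\omega(\lambda_iH_t)$ is an LCS Hamiltonian field supported in $V_i$, and $\sum_iX_t^{(i)}=X_t$. Because the sum reproduces $X_t$ but the flows do not commute, I follow Banyaga and write $g_t$ as a product of flows. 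Concretely, I define $\phi^{(k)}_t$ as the isotopy generated by $\sum_{i\le k}\lambda_iH_t$, and set $\psi^{(k)}_t=\phi^{(k)}_t\circ(\phi^{(k-1)}_t)^{-1}$. Then $g_1=\psi^{(N)}_1\circ\cdots\circ\psi^{(1)}_1$. The standard composition formula shows that $\psi^{(k)}_t$ is Hamiltonian with Hamiltonian $\lambda_kH_t$ transported by $(\phi^{(k-1)}_t)^{-1}$ and multiplied by the conformal factor of $\phi^{(k-1)}_t$. In the LCS setting the conjugation formula picks up this factor $a$; it must be tracked, but it preserves supports up to the diffeomorphism.

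The main obstacle is the support statement. The support of $\psi^{(k)}_t$ is $\phi^{(k-1)}_t(\operatorname{supp}\lambda_k)$, which may leave $V_k$. I handle this by first subdividing time. I write $g=\prod_j g_{t_{j+1}}g_{t_j}^{-1}$ with mesh small enough that every $g_{t_{j+1}}g_{t_j}^{-1}$, and every partial isotopy $\phi^{(k-1)}$ built from it, is $C^0$-close to the identity. By a Lebesgue number argument I can shrink the supports so that $\operatorname{supp}\lambda_k\subset W_k\Subset V_k$ and $\phi(W_k)\subset V_k$ for all diffeomorphisms $\phi$ that close to the identity. Reparametrizing each subinterval to $[0,1]$ multiplies the Hamiltonian by the length of the subinterval, so the generating fields scale down uniformly. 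This yields the required smallness by Gronwall estimates on the flows. Collecting all pieces over $j$ and $k$ gives a finite composition $g=g_m\circ\cdots\circ g_1$, each factor supported in some exact $V_i$. The factors are in fact Hamiltonian there, since by the first step they are Hamiltonian isotopies for the local symplectic form $e^{\sigma_i}\Omega$.
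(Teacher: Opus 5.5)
Your proposal is correct in substance, but it takes a different route from the paper.

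\textbf{How the two routes differ.} The paper subdivides time and passes on each chart $U_j$ to the local symplectic form $e^{h_j}\Omega$ with $K_t=e^{h_j}H_t$. It then invokes Banyaga's symplectic fragmentation as a black box for each near-identity factor $f_i=g_{t_i}\circ g_{t_{i-1}}^{-1}$. You instead run Banyaga's partition-of-unity argument intrinsically in the LCS category. Your only inputs are:
\begin{enumerate}
\item the linearity of $H\mapsto X_H$, where $i_{X_H}\Omega=d^\omega H$;
\item the fact that $d^\omega(\lambda_iH_t)$ vanishes off $\operatorname{supp}\lambda_i$;
\item the twisted transport formula $\phi^*X_H=X_{a\,(H\circ\phi)}$ for $\phi^*\Omega=a^{-1}\Omega$.
\end{enumerate}
You need the conformal identification with $e^{\sigma_i}\Omega$ only at the end, to recognize the factors as symplectic Hamiltonian in the charts.

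The paper's route is shorter. But when $l\neq 0$ there is no global symplectic form for which $f_i$ is a Hamiltonian symplectomorphism of $M$. So the citation cannot be applied verbatim, and the paper's Step 3 leaves implicit exactly the splitting you carry out. Your version is self-contained and treats exact and non-exact Lee forms uniformly. Both arguments start from the same input: $g$ is the time-one map of a $d^\omega$-Hamiltonian isotopy.

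\textbf{A bookkeeping slip in the ordering.} The clean generator you quote, $(\phi^{(k-1)}_t)^*X_{\lambda_kH_t}$ with Hamiltonian $a_t\cdot(\lambda_kH_t)\circ\phi^{(k-1)}_t$ where $(\phi^{(k-1)}_t)^*\Omega=a_t^{-1}\Omega$, generates $\chi^{(k)}_t:=(\phi^{(k-1)}_t)^{-1}\circ\phi^{(k)}_t$. It does not generate your $\psi^{(k)}_t=\phi^{(k)}_t\circ(\phi^{(k-1)}_t)^{-1}$.
\begin{itemize}
\item With the ordering $\chi^{(k)}$, you get $g_1=\chi^{(1)}_1\circ\cdots\circ\chi^{(N)}_1$, and $\chi^{(k)}_t$ is supported in $\overline{\bigcup_{s\le t}(\phi^{(k-1)}_s)^{-1}(\operatorname{supp}\lambda_k)}$.
\item Your $\psi^{(k)}_t=\phi^{(k-1)}_t\chi^{(k)}_t(\phi^{(k-1)}_t)^{-1}$ is generated by $X_{\lambda_kH_t}+Y^{(k-1)}_t-(\psi^{(k)}_t)_*Y^{(k-1)}_t$, where $Y^{(k-1)}_t$ generates $\phi^{(k-1)}_t$. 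This is still LCS Hamiltonian. As a conjugate of $\chi^{(k)}_t$, it is supported within $2NC\delta$ of $\operatorname{supp}\lambda_k$. Here $C$ bounds all $|X_{\lambda_iH_t}|$ and $\delta$ is the mesh.
\end{itemize}
So the support argument survives with either ordering, under two conditions. First, the distance from $\operatorname{supp}\lambda_k$ to $M\setminus V_k$ must exceed $2NC\delta$. Second, the cover must contain $\bigcup_t\operatorname{supp}H_t$ (for instance all of compact $M$), not just $\operatorname{supp}g$.

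Finally, the uniform bound $C$ already gives the displacement estimate $NC\delta$ for the partial isotopies. No Gronwall argument is needed.
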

	
	\begin{proof}
		Let $g$ be generated by $X_t = \#(d^\omega H_t)$. Since $M$ is compact and $\omega$ is locally exact, choose a finite open cover $\{U_j\}$ such that $\omega|_{U_j} = dh_j$.
		
		\textbf{Step 1:} Partition $[0,1]$ into small intervals such that each $f_i = g_{t_i} \circ g_{t_{i-1}}^{-1}$ is close to $\id$.
		
		\textbf{Step 2:} On each $U_j$, let $\Omega_{h_j} = e^{h_j}\Omega$ be the local symplectic form. The LCS Hamiltonian $H_t$ corresponds to the symplectic Hamiltonian $K_t = e^{h_j}H_t$.
		
		\textbf{Step 3:} By standard symplectic fragmentation (Banyaga \cite{Banyaga1978}), $f$ decomposes into\\ $\phi_k \circ \dots \circ \phi_1$ with $\operatorname{supp}(\phi_j) \subset U_j$. Each $\phi_j$ preserves $\Omega_{h_j}$ and thus preserves the LCS structure on $U_j$. Since $\phi_j$ is Hamiltonian in the symplectic chart, it is Hamiltonian in the LCS sense via $H_t = e^{-h_j}K_t$. Compactness and local-exactness thus ensure the global decomposition.
	\end{proof}
	
	\subsection{Lagrangian Non-displaceability and Generating Functions}
	
	\begin{theorem}[LCS NonDisplaceability for Exact Lee Form] \label{thm:non_displaceability}
		Let $(M, \Omega, \omega)$ be a closed LCS manifold and let $L \subset M$ be a compact LCS Lagrangian submanifold. Assume that the Lee form $\omega$ is exact, $\omega = dh$ for some smooth function $h : M \to \mathbb{R}$. Then $L$ cannot be displaced from itself by any Hamiltonian LCS diffeomorphism $g \in \Ham_\Omega(M)$; that is,
		\[
		g(L) \cap L \neq \varnothing, \quad \text{for all } g \in \Ham_\Omega(M).
		\]
	\end{theorem}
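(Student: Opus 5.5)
The plan is to reduce the statement entirely to the symplectic manifold $(M,\Omega_h)$, $\Omega_h = e^h\Omega$, as was done for Theorems \ref{thm:calabi_exact}, \ref{thm:energy_exact} and \ref{thm:hofer_exact}, and then to invoke a Floer-theoretic non-displaceability result.

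\textbf{Step 1: $L$ is Lagrangian for $\Omega_h$.} Since $\omega = dh$, we have $d\Omega_h = e^h(dh\wedge\Omega + d\Omega) = e^h(\omega\wedge\Omega - \omega\wedge\Omega) = 0$. Moreover $\Omega_h$ is non-degenerate because $e^h>0$. If $i:L\to M$ is the inclusion, then $i^*\Omega_h = e^{h\circ i}\, i^*\Omega = 0$. So $L$ is a compact Lagrangian submanifold of the closed symplectic manifold $(M,\Omega_h)$.

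\textbf{Step 2: identify the Hamiltonian groups.} For an LCS Hamiltonian isotopy with $i_{X_t}\Omega = d^\omega H_t$, set $K_t = e^hH_t$. Then
\[
i_{X_t}\Omega_h = e^h(dH_t + H_t\,dh) = d(e^hH_t) = dK_t .
\]
Hence $X_t$ is the $\Omega_h$-Hamiltonian vector field of $K_t$, and conversely. This shows that $\Ham_\Omega(M)$ coincides, as a set of diffeomorphisms, with $\Ham(M,\Omega_h)$. This is the identification $\Psi$ already used earlier. The statement therefore becomes: $L$ is not displaceable by $\Ham(M,\Omega_h)$.

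\textbf{Step 3: apply symplectic rigidity.} Here I would invoke Floer's theorem: for a compact Lagrangian $L$ with $\pi_2(M,L)=0$, or more generally with $[\Omega_h]$ vanishing on $\pi_2(M,L)$, Lagrangian Floer homology $HF(L,L)$ is defined. It is invariant under Hamiltonian isotopy and is isomorphic to $H_*(L;\mathbb{Z}_2)\neq 0$. If some $g(L)$ were disjoint from $L$, the complex $CF(L,g(L))$ would be zero, giving $HF=0$, which is a contradiction. A variant is to take $L$ to be the zero section in a Weinstein chart (Theorem \ref{thm:weinstein}) and use generating functions (Laudenbach–Sikorav), but that only handles $g$ close to the identity. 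The global statement genuinely requires Floer theory.

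\textbf{Main obstacle.} Step 3 cannot hold without an extra hypothesis. A small contractible circle in $T^2$ (with $h=0$) is Lagrangian and is displaceable by a translation realized as a compactly supported Hamiltonian isotopy. The same happens for any Lagrangian contained in a Darboux ball. So the proof must add, and the statement should be read with, a topological assumption such as $\pi_2(M,L)=0$ (weakly exact with respect to $\Omega_h$), or monotonicity with non-vanishing Floer homology. I would first verify that this condition is conformally natural. This holds because the pairing of $[\Omega_h]$ with $\pi_2(M,L)$ depends only on $\Omega_h$, which is canonically determined by $(\Omega,\omega)$ up to the constant in $h$, and rescaling by $e^c$ does not affect vanishing. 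I would then cite Floer's theorem verbatim for $(M,\Omega_h,L)$.
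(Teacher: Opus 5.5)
Your Steps 1 and 2 are exactly the paper's proof. It passes to $\Omega_h=e^h\Omega$, notes that $L$ is $\Omega_h$-Lagrangian, and identifies $\Ham_\Omega(M)$ with $\Ham(M,\Omega_h)$. It then closes by quoting, as a ``fundamental result in symplectic geometry'', that every compact Lagrangian $L$ in a symplectic manifold $(W,\sigma)$ satisfies $f(L)\cap L\neq\varnothing$ for all $f\in\Ham(W,\sigma)$.

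Your ``main obstacle'' paragraph is correct, and the gap it names lies in the paper, not in your proposal. The quoted result is false, and so is the theorem as stated. Your circle in $T^2$ is a valid counterexample, and it is not an artifact of $\omega=0$. For any nonconstant $h$ on $T^2$ and area form $\sigma$, the pair $\Omega=e^{-h}\sigma$, $\omega=dh$ is an LCS structure with exact Lee form and $\Omega_h=\sigma$. The same small circle is automatically LCS Lagrangian, being $1$-dimensional. It is displaced by the time-one map of a cut-off translation, which lies in $\Ham_\Omega(T^2)$. In any dimension, a Lagrangian contained in a Darboux ball of $(M,\Omega_h)$ behaves the same way. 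So no argument can close Step 3 without an added hypothesis, and the paper's citation does not supply one.

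Your repair is the standard one and is sound. Suppose $\Omega_h$ vanishes on $\pi_2(M,L)$, in particular if $\pi_2(M,L)=0$. Then there is no disc or sphere bubbling, $HF(L,g(L))\cong HF(L,L)\cong H_*(L;\mathbb{Z}_2)\neq 0$, and hence $g(L)\cap L\neq\varnothing$. Your remark that this condition is unchanged under $h\mapsto h+c$ makes it intrinsic to $(\Omega,\omega)$. In the monotone case, $HF(L,L)\neq 0$ must be assumed rather than derived, as you indicate.

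One small completion of Step 2. The paper defines $\Ham_\Omega(M)$ as $\tilde\pi(\ker\widetilde\Psi)$, the endpoints of zero-flux paths in $(\ker\Phi)_0$, not as time-one maps of LCS Hamiltonian isotopies. To match this, note that an LCS field with $L_X\Omega=-f\Omega$ satisfies $L_X\Omega_h=e^h(\omega(X)-f)\Omega$. By the paper's characterization $X\in\ker\Phi \iff f=\omega(X)$, $\ker\Phi$ is therefore exactly the Lie algebra of $\Omega_h$-symplectic fields. Also note that $[\alpha]_\omega\mapsto[e^h\alpha]$ carries the LCS flux to the $\Omega_h$-flux. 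Banyaga's flux theorem then gives $\tilde\pi(\ker\widetilde\Psi)=\Ham(M,\Omega_h)$. The paper asserts this identification without proof as well.
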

	
	\begin{proof}
		Since $\omega = dh$ is globally exact, the LCS manifold $(M, \Omega, \omega)$ is globally conformally equivalent to the symplectic manifold $(M, \Omega_h = e^h\Omega)$. An LCS Lagrangian submanifold $L$ for $(M, \Omega, \omega)$ is also a Lagrangian submanifold for the symplectic structure $(M, \Omega_h)$ \cite{Weinstein1971}. Furthermore, the group of Hamiltonian LCS diffeomorphisms $\Ham_\Omega(M)$ is isomorphic to the group of Hamiltonian symplectomorphisms $\Ham(M, \Omega_h)$. Under this isomorphism, $g \in \Ham_\Omega(M)$ corresponds to the same diffeomorphism considered as an element of $\Ham(M, \Omega_h)$. The non-displaceability of compact Lagrangian submanifolds by Hamiltonian symplectomorphisms is a fundamental result in symplectic geometry (see, e.g., \cite{Weinstein1971}, \cite{McDuffSalamon2017}, based on techniques like Floer homology or generating functions). This result states that for a compact Lagrangian $L$ in a symplectic manifold $(W, \sigma)$, $f(L) \cap L \neq \varnothing$ for all $f \in \Ham(W, \sigma)$. Applying this established symplectic result to our setting with $(W, \sigma) = (M, \Omega_h)$ and $f = g$, we conclude that $g(L) \cap L \neq \varnothing$ for all $g \in \Ham(M, \Omega_h) \cong \Ham_\Omega(M)$.
	\end{proof}
	
	\begin{remark}
		While the non-displaceability result holds directly when $\omega = dh$ by reduction to the symplectic case, the question remains open for general non-exact LCS structures. The harmonic component $l$ in $\omega = dh + l$ may allow for displacement in some cases, making this an interesting direction for future research.
	\end{remark}
	
	\begin{theorem}[Structure of $(\ker \Phi)_0$ Relative to Flux Classes] \label{thm:structure_ker_phi}
		Let $(M, \Omega, \omega)$ be a compact LCS manifold whose flux period group $\Delta \subset H_\omega^1(M)$ is discrete. Denote by $\mathcal{F} : (\ker \Phi)_0 \longrightarrow H_\omega^1(M)/\Delta$ the surjective flux homomorphism with kernel $\Ham_\Omega(M)$. Then:
		\begin{enumerate}
			\item \textbf{Fibration Structure:} The identity component $(\ker \Phi)_0$ is path-connected and is a principal fiber bundle over the discrete group $H_\omega^1(M)/\Delta$ with structure group and fiber $\Ham_\Omega(M)$. Each level set (coset) $\mathcal{F}^{-1}(\alpha)$ is homeomorphic to $\Ham_\Omega(M)$.
			\item \textbf{Topological Structure:} The subgroup $\Ham_\Omega(M)$ is both open (by Theorem \ref{thm:deformation_rigidity}) and closed (as the kernel of a continuous map to a discrete space) in $(\ker \Phi)_0$. Consequently, $\Ham_\Omega(M)$ is dense in $(\ker \Phi)_0$ if and only if $(\ker \Phi)_0 = \Ham_\Omega(M)$, which occurs if and only if $H_\omega^1(M)/\Delta = \{0\}$.
			\item \textbf{Local Generation within Flux Classes:} Assume that the Lee form $\omega$ is locally exact. Then for any given $\alpha \in H_\omega^1(M)/\Delta$ and any chosen reference diffeomorphism $g_\alpha \in \mathcal{F}^{-1}(\alpha)$, every element $g \in \mathcal{F}^{-1}(\alpha)$ can be written as $g = h \circ g_\alpha$ for some $h \in \Ham_\Omega(M)$. By the fragmentation theorem \ref{thm:fragmentation}, $h$ can be written as $h = h_m \circ \dots \circ h_1$, where each $h_i \in \Ham_\Omega(M)$ has support contained in a chart where $\omega$ is exact.
		\end{enumerate}
	\end{theorem}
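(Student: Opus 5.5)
The plan is to derive all three parts from the short exact sequence of Proposition \ref{prop:exact_sequence}, together with Theorem \ref{thm:deformation_rigidity} and Theorem \ref{thm:fragmentation}. The discreteness of $\Delta$ makes $H_\omega^1(M)/\Delta$ a discrete group in the quotient topology, but only relative to what $\mathcal{F}$ actually sees, so the first task is to fix the topology. I would regard $\mathcal{F}$ as continuous into the quotient topology. Continuity comes from the continuity of $\Psi_{\mathcal{P}}$ (Step 1 of the proof of Theorem \ref{thm:deformation_rigidity}) combined with the openness of $ev_1$. By Step 3 of that proof, $\mathcal{F}$ is locally constant, and this local constancy is what the whole argument runs on.

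\textbf{Part (1).} Path-connectedness of $(\ker\Phi)_0$ holds by definition as an identity component of a locally path-connected group: the paper already works with smooth paths from $\id_M$, so every element is joined to the identity. For the bundle structure, the fibres $\mathcal{F}^{-1}(\alpha)$ are exactly the cosets $\Ham_\Omega(M)\, g_\alpha$. Right translation $h\mapsto h g_\alpha$ is a homeomorphism $\Ham_\Omega(M)\to\mathcal{F}^{-1}(\alpha)$ commuting with the left $\Ham_\Omega(M)$-action. Because each coset is open (Theorem \ref{thm:deformation_rigidity}), the disjoint union of these trivializations gives a global, hence locally trivial, principal bundle over the discrete base. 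No continuous section is required here, in contrast to Theorem \ref{thm:splitting}.

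\textbf{Part (2).} Openness is exactly the conclusion of Theorem \ref{thm:deformation_rigidity}. Closedness follows because $\Ham_\Omega(M)=\mathcal{F}^{-1}(0)$ and $\{0\}$ is closed in the discrete target; alternatively, its complement is a union of open cosets. For the density statement, a subset that is both open and closed has closure equal to itself. So $\Ham_\Omega(M)$ is dense if and only if it equals $(\ker\Phi)_0$. By exactness and surjectivity of $\mathcal{F}$, this happens if and only if $H_\omega^1(M)/\Delta=\{0\}$.

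I flag one point that the argument must confront rather than hide. Since $(\ker\Phi)_0$ is connected, a nonempty clopen subgroup must already be everything. The honest conclusion of Part (2) is therefore that the hypotheses force $H_\omega^1(M)/\Delta$ to be trivial whenever the topology on $\mathcal{F}$'s target is genuinely discrete. I would state the equivalences as written, since they follow formally, but note this consequence explicitly.

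\textbf{Part (3).} Given $g,g_\alpha\in\mathcal{F}^{-1}(\alpha)$, set $h=g\circ g_\alpha^{-1}$. The homomorphism property gives $\mathcal{F}(h)=\alpha-\alpha=0$, so $h\in\Ham_\Omega(M)$ by exactness. Then Theorem \ref{thm:fragmentation} supplies the decomposition $h=h_m\circ\dots\circ h_1$ with each $h_i$ supported in a chart on which $\omega$ is exact. Part (3) is routine once these two results are invoked.

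The main obstacle is the topological bookkeeping in Part (1) and Part (2): justifying that $\mathcal{F}$ is continuous into a discrete space, and reconciling this with connectedness.
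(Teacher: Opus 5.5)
Your proposal follows the paper's proof essentially step for step: cosets and translation homeomorphisms for (1), Theorem \ref{thm:deformation_rigidity} plus closedness of $\mathcal{F}^{-1}(0)$ for (2), and $h=g\circ g_\alpha^{-1}$ followed by Theorem \ref{thm:fragmentation} for (3); your derivation of the continuity of $\mathcal{F}$ from $ev_1$ being a continuous open surjection also supplies a step the paper only asserts. The tension you flag is genuine, since the paper's own proof of (2) states that a clopen subgroup of the connected group $(\ker\Phi)_0$ is everything and then calls $\Ham_\Omega(M)$ a proper clopen subgroup when $H_\omega^1(M)/\Delta\neq\{0\}$; moreover it does not hinge on the target topology, because discreteness of $\Delta$ only makes $H_\omega^1(M)/\Delta$ Hausdorff (not discrete), the local constancy comes entirely from Theorem \ref{thm:deformation_rigidity}, and openness together with connectedness and surjectivity of $\mathcal{F}$ already forces $H_\omega^1(M)/\Delta=\{0\}$.
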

	
	\begin{proof}
		(1) \textbf{Fibration:} $(\ker \Phi)_0$ is path-connected by definition. The flux map $\mathcal{F}$ is a continuous group homomorphism onto the discrete group $G = H_\omega^1(M)/\Delta$. The kernel $\Ham_\Omega(M)$ is a closed normal subgroup. The quotient map $\pi : (\ker \Phi)_0 \to (\ker \Phi)_0 / \Ham_\Omega(M)$ is open and continuous. Since the target $G$ is discrete, the fibers $\pi^{-1}(\alpha) = \mathcal{F}^{-1}(\alpha)$ are both open and closed. Each fiber is a coset $g_\alpha \Ham_\Omega(M)$ and the map $h \mapsto g_\alpha h$ is a homeomorphism from $\Ham_\Omega(M)$ to the fiber. This establishes the structure of a principal fiber bundle.
		
		(2) \textbf{Topological Structure:} Since $\mathcal{F}$ is continuous and $H_\omega^1/\Delta$ is discrete (hence T1 and Hausdorff), the kernel $\Ham_\Omega(M) = \mathcal{F}^{-1}(0)$ is a closed subgroup. By Theorem \ref{thm:deformation_rigidity}, $\Ham_\Omega(M)$ contains an open neighborhood of the identity, and being a subgroup, it is open. In a connected topological group like $(\ker \Phi)_0$, a subgroup that is both open and closed must be the entire group. Therefore, if $H_\omega^1/\Delta$ is trivial, $\Ham_\Omega(M) = (\ker \Phi)_0$. If $H_\omega^1/\Delta$ is non-trivial, $\Ham_\Omega(M)$ is a proper open and closed subgroup. A proper closed subgroup cannot be dense.
		
		(3) \textbf{Local Generation:} Let $g, g_\alpha \in \mathcal{F}^{-1}(\alpha)$. Let $h = g \circ g_\alpha^{-1}$. Then $\mathcal{F}(h) = \mathcal{F}(g) - \mathcal{F}(g_\alpha) = \alpha - \alpha = 0$, so $h \in \Ham_\Omega(M)$. Thus $g = h \circ g_\alpha$. Since $\omega$ is locally exact, by the Fragmentation Theorem \ref{thm:fragmentation}, $h$ can be decomposed as $h = h_m \circ \dots \circ h_1$, where each $h_i \in \Ham_\Omega(M)$ has support contained in a chart where $\omega$ is exact.
	\end{proof}
	
	\begin{theorem}[Generating Functions for LCS Lagrangians when $\omega$ is exact] \label{thm:generating_functions}
		Let $(M, \Omega, \omega)$ be an LCS manifold with $\omega = dh$ globally. Let $\Omega_h = e^h\Omega$ be the associated symplectic form. Let $L \subset M$ be an LCS Lagrangian submanifold ($i^*\Omega = 0$). Then $L$ is also Lagrangian for $\Omega_h$ ($i^*\Omega_h = 0$). If $L$ can be locally represented as the graph of $p = dS(q)$ in Darboux coordinates $(q, p)$ for $\Omega_h$, then $S$ is a standard generating function for $L$ w.r.t. $\Omega_h$.
	\end{theorem}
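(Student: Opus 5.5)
The plan has two parts. First, the Lagrangian property transfers trivially along the conformal change. Second, the generating-function statement then reduces to the classical symplectic fact for graphs of closed $1$-forms in Darboux charts.

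\textbf{Step 1: $L$ is Lagrangian for $\Omega_h$.}
Since $h$ is a smooth function on $M$ and $i : L \to M$ is the inclusion, we have
\[
i^*\Omega_h = i^*(e^h\Omega) = (e^h\circ i)\, i^*\Omega = 0 .
\]
The factor $e^{h}$ is a positive function, so it cannot create or destroy isotropy. Dimensions are unchanged ($\dim L = n$, $\dim M = 2n$). Moreover $\Omega_h$ is closed, because $d\Omega_h = e^h(dh\wedge\Omega + d\Omega) = e^h d^\omega\Omega = 0$ (Remark on $d^\omega$-closedness of $\Omega$). It is non-degenerate, since $e^h>0$. Hence $(M,\Omega_h)$ is symplectic and $L$ is a Lagrangian submanifold of it.

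\textbf{Step 2: graphs in Darboux coordinates.}
Fix Darboux coordinates $(q,p)$ for $\Omega_h$ on an open set $W$, so that $\Omega_h|_W = \sum_j dq_j\wedge dp_j = -d\lambda$ with $\lambda = \sum_j p_j\,dq_j$; the sign depends on convention and is irrelevant. Suppose $L\cap W$ is the graph of a map $q\mapsto p(q)$ over an open set $B$ in $q$-space, i.e.\ the image of $\sigma(q) = (q,p(q))$. Then
\[
\sigma^*\lambda = \sum_j p_j(q)\,dq_j =: \beta , \qquad \sigma^*\Omega_h = -d\beta .
\]
The Lagrangian condition from Step 1 gives $d\beta = 0$. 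Shrinking $B$ to a ball, the Poincar\'e lemma yields $S$ with $\beta = dS$. Thus $p = dS(q)$, and $S$ is a standard generating function for $L$ with respect to $\Omega_h$. Conversely, if the graph is given as $p=dS(q)$ as in the hypothesis, then $\sigma^*\Omega_h = -d(dS) = 0$, which confirms that $S$ generates a Lagrangian graph of the usual kind. In either reading, $S$ is unique up to an additive constant on connected $B$.

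\textbf{Step 3: remark on the LCS form.}
It is worth noting that in these coordinates $\Omega = e^{-h}\Omega_h$. A function $S$ generating $L$ for $\Omega_h$ therefore serves for $\Omega$ as well, since Lagrangianity is conformally invariant. The generating function is attached to the symplectic representative $\Omega_h$ rather than to $\Omega$ itself, because $\Omega$ is not closed in the $(q,p)$ chart.

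The only point needing care is the convention for ``standard generating function''. Once the graph is written over the $q$-coordinates, the argument is just closedness of $\sum_j p_j\,dq_j$ together with the Poincar\'e lemma, so no real obstacle is expected. The main substantive input is Step 1's observation that the conformal factor is global because $\omega=dh$ globally.
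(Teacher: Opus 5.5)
Your proposal is correct and follows essentially the paper's route: multiplying by the positive global factor $e^h$ transfers $i^*\Omega=0$ to $i^*\Omega_h=0$, after which the claim is the classical Darboux-chart statement about Lagrangian graphs, with the reverse direction $i^*\Omega = e^{-h|_L}\, i^*\Omega_h$ matching your Step 3. The only difference is that the paper simply takes $p=dS(q)$ as the definition of a type I generating function, whereas you also check that $\Omega_h$ is symplectic and derive the existence of $S$ from the closedness of $\sum_j p_j\,dq_j$ via the Poincar\'e lemma, which is a slightly stronger and correct statement.
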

	
	\begin{proof}
		We are given $i^*\Omega = 0$. Since $\Omega_h = e^h\Omega$, we compute $i^*\Omega_h = i^*(e^h\Omega) = (e^h \circ i) \cdot (i^*\Omega) = (e^h|_L) \cdot 0 = 0$. Thus, $L$ is Lagrangian for the symplectic form $\Omega_h$. Assume that in local Darboux coordinates $(q_1, \dots, q_n, p_1, \dots, p_n)$ for $\Omega_h = \sum_{k=1}^n dp_k \wedge dq_k$, the submanifold $L$ is parameterized by $q \mapsto (q, p(q))$, where $p_k = \frac{\partial S}{\partial q_k}$ for some smooth function $S(q_1, \dots, q_n)$. This means $S$ is a generating function of type I for the Lagrangian submanifold $L$ with respect to the symplectic form $\Omega_h$. The condition $i^*\Omega = 0$ is automatically satisfied because $i^*\Omega = e^{-h|_L} i^*\Omega_h$ and we already established $i^*\Omega_h = 0$. Therefore, the standard symplectic generating function $S$ for $L$ with respect to $\Omega_h$ also describes $L$ as an LCS Lagrangian for $\Omega$.
	\end{proof}
	
	\section{Concrete Example: The Kodaira-Thurston Manifold}
	
	\subsection{The Manifold and LCS Structure}
	
	To illustrate the concepts developed in this paper, particularly the non-exact case where the Lee form has a nontrivial harmonic component, we consider the Kodaira-Thurston manifold. This 4-dimensional compact nilmanifold provides a concrete example where we can perform explicit calculations of both Flux and the Twisted Calabi invariant.
	
	\begin{definition}[Kodaira-Thurston Manifold]
		The \emph{Kodaira-Thurston manifold} $M$ is the quotient of $\mathbb{R}^4$ by the discrete group $\Gamma$ generated by the transformations:
		\begin{align*}
			(x, y, z, w) &\mapsto (x+1, y, z, w), \\
			(x, y, z, w) &\mapsto (x, y+1, z, w), \\
			(x, y, z, w) &\mapsto (x, y, z+1, w), \\
			(x, y, z, w) &\mapsto (x+y, y, z, w+1).
		\end{align*}
		Topologically, $M$ is a torus bundle over a torus.
	\end{definition}
	
	We endow $M$ with the following LCS structure $(\Omega, \omega)$:
	\[
	\omega = dz, \quad \Omega = e^z dx \wedge dy + dw \wedge dz.
	\]
	
	\begin{proposition}
		The triple $(M, \Omega, \omega)$ is a compact LCS manifold. The Lee form $\omega = dz$ is harmonic and not exact (since $z$ is a coordinate on $S^1$), making this a strictly non-exact LCS structure ($l \neq 0$).
	\end{proposition}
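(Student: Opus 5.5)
The proof checks four things in turn: that $(\Omega,\omega)$ is well defined on $M=\mathbb{R}^4/\Gamma$, the structure equation $d\Omega=-\omega\wedge\Omega$, non-degeneracy, and that $\omega$ is harmonic but not exact. Compactness is immediate, since $M$ is the quotient of $\mathbb{R}^4$ by a cocompact lattice of a nilpotent group (a fundamental domain is $[0,1]^4$). I also flag two places where the statement as literally written needs a correction.

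\textbf{Well-definedness.} I would test $\omega$ and $\Omega$ against each generator of $\Gamma$. The three translations in $x$, $y$ and $w$ leave $dx$, $dy$, $dz$, $dw$ and $e^z$ invariant. The fourth generator $(x,y,z,w)\mapsto(x+y,y,z,w+1)$ sends $dx\mapsto dx+dy$, so $dx\wedge dy$ is unchanged, and it fixes $dw$ and $dz$. Hence $\omega=dz$ descends, and so does $\Omega$ under these four maps.

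\emph{Main obstacle.} The translation $z\mapsto z+1$ multiplies $e^z\,dx\wedge dy$ by the constant $e$ but fixes $dw\wedge dz$. So $\Omega$ is not invariant on the nose. I expect this to be the hardest step, and I would make the correction explicit in one of two ways. The first is to read the $z$-direction as the $\mathbb{R}$-factor of the cover, so that the structure is defined up to constant rescaling, which is exactly the local-conformal ambiguity allowed in LCS geometry. The second, which I would try first, is to check whether replacing $e^z$ by a $\Gamma$-periodic factor still satisfies the equation; it does not, which confirms that some correction is genuinely needed. The remaining steps are purely local computations on $\mathbb{R}^4$ and do not depend on this choice.

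\textbf{Structure equation and non-degeneracy.} First, $d\Omega=e^z\,dz\wedge dx\wedge dy$. Second, $\omega\wedge\Omega=e^z\,dz\wedge dx\wedge dy$ with $\omega=dz$. So the identity that actually holds is $d\Omega=+\omega\wedge\Omega$. The paper's convention $d\Omega=-\omega\wedge\Omega$ therefore requires the Lee form $-dz$, and I would state this sign explicitly; it changes nothing about the harmonicity and non-exactness claims below. For non-degeneracy, $\Omega^2=2e^z\,dx\wedge dy\wedge dw\wedge dz$, which vanishes nowhere.

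\textbf{Non-exactness and harmonicity.} For non-exactness, take the closed loop $\gamma(t)=(0,0,t,0)$, $t\in[0,1]$, in $M$. Then $\int_\gamma dz=1\neq 0$, so $[dz]\neq0$ in $H^1_{dR}(M)$ and $\omega$ is not exact; the same holds for $-dz$.

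For harmonicity, I would use a $\Gamma$-invariant metric built from the invariant coframe $e^1=dx-y\,dw$, $e^2=dy$, $e^3=dz$, $e^4=dw$. Invariance under the fourth generator holds because $x-yw\mapsto(x+y)-y(w+1)=x-yw$. Declaring this coframe orthonormal, $dz=e^3$ is closed, and $\ast e^3=\pm e^1\wedge e^2\wedge e^4$. Since $de^1=-dy\wedge dw$, we get
\[
d(e^1\wedge e^2\wedge e^4)=-\,dy\wedge dw\wedge dy\wedge dw=0 .
\]
So $dz$ is co-closed, hence harmonic. Consequently the Hodge decomposition of the Lee form has $h=0$ and $l=\pm dz\neq0$, which is the strictly non-exact case the proposition asserts.
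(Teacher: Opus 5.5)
Both corrections you flag are right, and on these points your proposal is more careful than the paper's proof. The paper asserts that ``the forms are invariant under the group action''. That is false for $R\colon z\mapsto z+1$, since $R^*\Omega=e^{z+1}\,dx\wedge dy+dw\wedge dz$. Its displayed step $e^z\,dz\wedge dx\wedge dy=-dz\wedge(e^z\,dx\wedge dy)$ is a sign error, so indeed $d\Omega=+\,dz\wedge\Omega$ and the stated convention forces the Lee form $-dz$.

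\textbf{The gap.} Your proposal leaves the well-definedness step open, and your first suggested correction does not work. $R^*\Omega$ is not $c\,\Omega$ for any constant, or even any positive function, $c$: the $dx\wedge dy$ coefficient forces $c=e$ while the $dw\wedge dz$ coefficient forces $c=1$. So there is no conformal ambiguity to absorb. Unrolling $z$ to $\mathbb{R}$ instead gives a non-compact manifold on which $dz=d(z)$ is exact and your loop $\gamma$ is not closed. So, contrary to your remark, the non-exactness step is global and depends on exactly this choice; the correction destroys both properties the proposition asserts. The periodicity of $z$ that makes $dz$ non-exact is the same periodicity that makes $e^z$ ill-defined on $M$. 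With $\Omega$ as written the proposition is false, no proof can succeed, and the right conclusion of your checks is that $\Omega$ must be replaced.

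\textbf{A repair on the same manifold.} Set $\eta:=dx-w\,dy$. It is $\Gamma$-invariant: the fourth generator pulls it back to $dx+dy-(w+1)\,dy=\eta$. Moreover $d\eta=dy\wedge dw$ and $\eta\wedge d\eta=dx\wedge dy\wedge dw\neq0$, so $\eta$ is a contact form on the Heisenberg factor in $(x,y,w)$. With $\omega=dz$, put $\Omega:=d\eta+dz\wedge\eta=d^\omega\eta$. Then $d\Omega=-dz\wedge d\eta=-\omega\wedge\Omega$ and $\Omega\wedge\Omega=2\,dy\wedge dw\wedge dz\wedge dx\neq0$. Your loop-integral and invariant-metric arguments then genuinely establish non-exactness and harmonicity of $\omega$; the paper's proof only asserts these in a parenthetical.

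\textbf{One slip in the harmonicity step.} Your $e^1=dx-y\,dw$ is not $\Gamma$-invariant, since the fourth generator pulls it back to $dx+dy-y\,dw$. What you checked was the function $x-yw$, whose differential is a different form and which is not even invariant under $y\mapsto y+1$. Use $e^1=\eta$ instead. Since $de^1=dy\wedge dw$, the computation $d(e^1\wedge e^2\wedge e^4)=0$ goes through verbatim.
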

	
	\begin{proof}
		The forms are invariant under the group action. The LCS condition holds:
		\[
		d\Omega = d(e^z dx \wedge dy) = e^z dz \wedge dx \wedge dy = -dz \wedge (e^z dx \wedge dy) = -\omega \wedge \Omega.
		\]
		Non-degeneracy is verified by the volume form:
		\[
		\frac{\Omega^2}{2!} = e^z dx \wedge dy \wedge dw \wedge dz.
		\]
	\end{proof}
	
	\subsection{Lichnerowicz Cohomology}
	
	As a prerequisite for the flux calculation, we identify the cohomology.
	
	\begin{theorem}
		For the Kodaira-Thurston manifold with $\omega = dz$, the first Lichnerowicz cohomology group is $H_\omega^1(M) \cong \mathbb{R}^2$, generated by the classes $[e^{-z}dx]_\omega$ and $[e^{-z}dy]_\omega$.
	\end{theorem}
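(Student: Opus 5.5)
The plan is to compute $H_\omega^1(M)$ directly from the structure equation $d^\omega = d + dz\wedge\cdot$. The basic tool is the gauge identity from the Proposition on Lichnerowicz versus de Rham cohomology, applied locally with $h=z$: on the universal cover $\mathbb{R}^4$, where $z$ is a genuine function,
\[
d^\omega(e^{-z}\beta) = e^{-z}\,d\beta .
\]
Writing $\alpha = e^{-z}\beta$, the equation $d^\omega\alpha=0$ becomes $d\beta=0$. Likewise, $\alpha = d^\omega f$ becomes $\beta = d(e^{z}f)$.

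\textbf{Step 1: the two proposed classes are cocycles.} I would first check that $e^{-z}dx$ and $e^{-z}dy$ are $d^\omega$-closed:
\[
d(e^{-z}dx) + dz\wedge e^{-z}dx = -e^{-z}dz\wedge dx + e^{-z}dz\wedge dx = 0,
\]
and the same computation works for $dy$.

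\textbf{Step 2: reduce to forms depending only on $z$.} The structure $(\Omega,\omega)$ is invariant under the translations in $x$, $y$, $w$ that descend to $M$. So I would average a $d^\omega$-closed 1-form over this compact torus action. Averaging commutes with $d^\omega$, because $\omega=dz$ is invariant, and by the standard homotopy argument it does not change the $d^\omega$-class. This is a Nomizu/Hattori-type reduction, and one could also take the $\Delta_\omega$-harmonic representative from the Twisted Hodge Decomposition and argue it is automatically invariant.

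After this reduction it is enough to consider
\[
\alpha = a(z)\,dx + b(z)\,dy + c(z)\,dz + e(z)\,\theta ,
\]
where $\theta$ is the invariant $1$-form completing the coframe. Concretely $\theta = dw - x\,dy$ or its analogue, chosen to be compatible with the last generator of $\Gamma$.

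\textbf{Step 3: solve the ODE system in $z$.} Set $\beta = e^{z}\alpha$ and impose $d\beta = 0$:
\begin{itemize}
\item The $dz\wedge dx$ and $dz\wedge dy$ components force $e^{z}a$ and $e^{z}b$ to be constant. This produces exactly the span of $e^{-z}dx$ and $e^{-z}dy$.
\item The $d\theta = -dx\wedge dy$ term forces $e=0$.
\item The $dz$-component $c(z)\,dz$ is always $d^\omega$-exact. Indeed $d^\omega f = (f'+f)\,dz$ for $f=f(z)$, and the ODE $f'+f=c$ is solvable.
\end{itemize}
It remains to show that no nonzero combination $\lambda e^{-z}dx + \mu e^{-z}dy$ is exact. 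If it were, then $e^{z}f$ would have differential $\lambda\,dx+\mu\,dy$. Comparing the $dx$, $dy$ components against the periodicity in $x$, $y$ of the reduced $f$ forces $\lambda=\mu=0$. Together these give $H_\omega^1(M)\cong\mathbb{R}^2$ with the stated generators.

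\textbf{Main obstacle.} I expect Step 2 and the global meaning of the factor $e^{-z}$ to be the hardest part. The deck transformation $z\mapsto z+1$ rescales $e^{-z}$, so the reduction has to be done equivariantly on the cover, keeping track of how $\Gamma$ acts on $z$-dependent coefficients. I would first try to make this precise by checking invariance of each candidate form directly against the four generators of $\Gamma$. I would also confirm that the averaging homotopy preserves $d^\omega$-classes. This step is where the argument is most delicate, and it needs to be checked carefully against the paper's conventions for how the coordinate $z$ descends to $M$.
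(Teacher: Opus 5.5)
There is a genuine gap, and it is the point you set aside as the ``main obstacle''. It is fatal, not just delicate. The deck group contains $T_3:(x,y,z,w)\mapsto(x,y,z+1,w)$ and $T_4:(x,y,z,w)\mapsto(x+y,y,z,w+1)$. These satisfy $T_3^*(e^{-z})=e^{-1}e^{-z}$ and $T_4^*(dx)=dx+dy$. So neither $e^{-z}dx$ nor $e^{-z}dy$ is $\Gamma$-invariant, and they are not forms on $M$ at all. Your Step 1 only checks $d^\omega$-closedness on $\mathbb{R}^4$. There they are even $d^\omega$-exact, since $\lambda e^{-z}dx+\mu e^{-z}dy=d^\omega\bigl(e^{-z}(\lambda x+\mu y)\bigr)$. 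The same error recurs in Step 3: you solved $(e^{z}a)'=0$ without requiring $a$ to be $1$-periodic in $z$. Once you impose that, $a=Ce^{-z}$ forces $C=0$.

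The reduction has further problems. For this $\Gamma$ the invariant coframe is $dy,\,dz,\,dw,\,\theta=dx-w\,dy$, with $d\theta=dy\wedge dw$. Your $\theta=dw-x\,dy$ is not invariant, and neither is $dx$ itself. Translations in $y$ do not normalize $\Gamma$, so there is no torus action in $x,y,w$ to average over. Done correctly, take $\alpha=a\,dy+b\,dw+c\,dz+e\,\theta$ with $1$-periodic coefficients. Then $d^\omega\alpha=0$ gives $e=0$ and $a'+a=b'+b=0$, hence $a=b=0$. The remaining term $c(z)\,dz$ is $d^\omega$-exact.

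In fact the statement is false, so no repair can produce $\mathbb{R}^2$: $H^*_\omega(M)=0$ in every degree. Translation in $z$ commutes with $\Gamma$, so $M=N^3\times S^1_z$ with $N^3$ the Heisenberg nilmanifold. The flow $\phi_t$ of $\partial_z$ satisfies $\phi_t^*\omega=\omega$, $\phi_1=\mathrm{id}$ and $\omega(\partial_z)=1$. For $d^\omega\alpha=0$ one has $\mathcal{L}_{\partial_z}\alpha=d^\omega(i_{\partial_z}\alpha)-\alpha$. Hence $\frac{d}{dt}\bigl(e^t\phi_t^*\alpha\bigr)=d^\omega\bigl(e^t\phi_t^*i_{\partial_z}\alpha\bigr)$, and integrating over $[0,1]$ gives
\[
(e-1)\,\alpha=d^\omega\Bigl(\int_0^1 e^t\,\phi_t^*(i_{\partial_z}\alpha)\,dt\Bigr).
\]
Equivalently, use K\"unneth together with $H^*_{dz}(S^1)=0$, which holds because $f\mapsto f'+f$ is invertible on periodic functions. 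This is the same homotopy argument you invoked to justify averaging. Applied in the one direction where $\omega(X)\neq 0$, it kills every class instead of preserving it.

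The paper itself offers no proof here, only a citation. The same non-periodicity of $e^{z}$ also prevents its $\Omega=e^z dx\wedge dy+dw\wedge dz$ from descending to $M$. So your Step 3, carried out with the periodicity constraints in place, proves $H^1_\omega(M)=0$ rather than the claimed statement.
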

	
	(See \cite{Banyaga2001} or similar literature for the full derivation on nilmanifolds). Note that $e^z dy$ (which appears below) represents a non-trivial cohomology class.
	
	\subsection{A Non-Hamiltonian Loop}
	
	We exhibit a loop of LCS diffeomorphisms that is not Hamiltonian, illustrating the non-triviality of the Flux group. Consider the vector field $X = \frac{\partial}{\partial x}$.
	\[
	i_X \Omega = i_{\partial_x}(e^z dx \wedge dy + dw \wedge dz) = e^z dy.
	\]
	We calculate $d^\omega(e^z dy) = d(e^z dy) + dz \wedge (e^z dy) = e^z dz \wedge dy + e^z dz \wedge dy = 0$. Thus $X$ is an LCS vector field ($L_X \Omega = d^\omega(i_X \Omega) + i_X d^\omega \Omega = 0$).	The flow $\phi_t(x, y, z, w) = (x+t, y, z, w)$ defines a loop for $t \in [0,1]$. Its flux is:
	\[
	\widetilde{\Psi}([\{\phi_t\}]) = \left[ \int_0^1 i_X \Omega \, dt \right]_\omega = [e^z dy]_\omega.
	\]
	As shown in standard computations for this manifold, $[e^z dy]_\omega \neq 0$ in $H_\omega^1(M)$. Therefore, by Proposition \ref{prop:exact_sequence}, this loop does not lie in $\Ham_\Omega(M)$. It represents a "spiral staircase" phenomenon: a loop in the diffeomorphism group that is obstructed from being Hamiltonian by the harmonic Lee form.
	
	\subsection{Twisted Calabi Invariant Calculation}
	
	In contrast to the symplectic case where "constant" Hamiltonians are trivial, the non-exact LCS setting allows for rigid, non-trivial Hamiltonian dynamics generated by constants.
	
	Consider the Hamiltonian function $H(x,y,z,w) = 1$.
	\[
	d^\omega H = d(1) + 1 \cdot dz = dz.
	\]
	We seek the vector field $V$ such that $i_V \Omega = dz$. Inspecting $\Omega = e^z dx \wedge dy + dw \wedge dz$, we find that $V = \frac{\partial}{\partial w}$ satisfies:
	\[
	i_{\partial_w} \Omega = i_{\partial_w}(dw \wedge dz) = 1 \cdot dz = dz.
	\]
	Thus, the translation in the $w$-direction, $g_t(w) = w+t$, is a \emph{Hamiltonian} LCS isotopy generated by the unique Hamiltonian $H=1$.	We compute the \textbf{Twisted Calabi Invariant} for the time-1 map $g = g_1$:
	\begin{align*}
		\Cal_\omega(g) &= \int_0^1 \left( \int_M H \, \frac{\Omega^n}{n!} \right) dt \\
		&= \int_M 1 \cdot (e^z dx \wedge dy \wedge dw \wedge dz).
	\end{align*}
	Using the coordinates of the fundamental domain $[0,1]^4$:
	\[
	\Cal_\omega(g) = \int_0^1 \int_0^1 \int_0^1 \int_0^1 e^z \, dx \, dy \, dw \, dz = \left(\int_0^1 e^z dz\right) = e - 1.
	\]
	
	\begin{remark}[Interpretation]
		The value $e-1$ is strictly non-zero.
		\begin{itemize}
			\item In a symplectic manifold, a Hamiltonian $H=1$ can usually be normalized to $H=0$, yielding zero Calabi invariant, or implies a trivial evolution.
			\item In this non-exact LCS manifold, the Hamiltonian $H=1$ is unique (Proposition \ref{prop:rigidity}) and generates a non-trivial translation $\partial_w$.
			\item The value $e-1$ represents the "conformal volume" or "topological work" required to translate along the fiber $w$ against the gradient of the Lee form's potential (conceptually $e^z$).
		\end{itemize}
		This explicitly confirms that $\Cal_\omega$ captures non-trivial dynamical information in the strictly non-exact setting.
	\end{remark}
	
	\section{Conclusion and Perspectives}
	
	This paper explored geometric aspects of locally conformally symplectic (LCS) diffeomorphisms, with a particular focus on the consequences derived from the LCS flux homomorphism introduced by S. Haller \cite{Haller2005}. A key contribution is the systematic treatment of the Hodge decomposition $\omega = dh + l$ of the Lee form, which distinguishes between exact ($l=0$) and non-exact ($l\neq 0$) LCS structures. This decomposition provides a natural measure of how "far" an LCS structure is from being symplectic and fundamentally influences the geometry.
	
	By analyzing the flux, we elucidated structural properties of the subgroup $(\ker \Phi)_0$, including the fundamental short exact sequence (Proposition \ref{prop:exact_sequence}) that positions the Hamiltonian subgroup $\Ham_\Omega(M)$ as the kernel of the flux map $\mathcal{F} : (\ker \Phi)_0 \to H_\omega^1(M)/\Delta$. We presented several theorems that extend key concepts from symplectic geometry to the LCS setting, demonstrating how the harmonic component $l$ of $\omega$ influences these structures. These include the LCS Weinstein neighborhood theorem (Theorem \ref{thm:weinstein}), LCS flux rigidity (Theorem \ref{thm:flux_rigidity}), a condition for the existence of an LCS structure on mapping tori based on vanishing flux (Theorem \ref{thm:mapping_torus}), and topological properties of $\Ham_\Omega(M)$ such as openness and the structure of flux classes (Theorem \ref{thm:deformation_rigidity}, Theorem \ref{thm:structure_ker_phi}).
	
	The Hodge decomposition clarifies which results hold generally versus those specific to the exact case:
	\begin{itemize}
		\item \textbf{General LCS results} (valid for both exact and non-exact $\omega$): Weinstein neighborhood theorem, flux rigidity, mapping torus condition, deformation rigidity, fragmentation, and the structure of $(\ker \Phi)_0$.
		\item \textbf{Exact case results} ($\omega = dh$): Calabi invariant (Theorem \ref{thm:calabi_exact}), Hofer energy and metric (Theorems \ref{thm:energy_exact}, \ref{thm:hofer_exact}), non-displaceability (Theorem \ref{thm:non_displaceability}), and generating function theory (Theorem \ref{thm:generating_functions}).
	\end{itemize}
	
	For the exact case, the link to standard symplectic geometry via conformal rescaling $\Omega_h = e^h\Omega$ allowed us to define and utilize analogues of the Calabi invariant, Hofer energy and metric, fragmentation, and generating functions for Lagrangian submanifolds. These results highlight that LCS geometry with exact Lee form behaves much like a conformally "twisted" version of symplectic geometry, inheriting many of its rigidity and structural properties.
	
	An LCS non-displaceability theorem was presented under the exactness condition $\omega = dh$, suggesting that this rigidity extends to the LCS setting in the conformally symplectic case, while the question remains open for general non-exact LCS structures where the harmonic component $l$ may allow for new phenomena.
	
	The results of this paper open many avenues for applying locally conformal symplectic (LCS) geometry to both practical and theoretical problems. In classical and quantum mechanics, they provide a natural framework for extending Hamiltonian dynamics to systems with dissipation by incorporating a nonzero Lee form into the dynamics. In the study of dynamical systems and control theory, our results offer new tools for analyzing the stability and rigidity of phase space structures under nonideal, nonconservative conditions. Additionally, in geometric topology, the construction of the extended Lee homomorphism introduces powerful invariants that can be used to classify and distinguish LCS structures up to conformal equivalence. Overall, these findings emphasize that LCS geometry is not merely a generalization of symplectic geometry but a robust and versatile framework capable of addressing essential challenges in modern physics and differential geometry.
	
	\subsection*{Future Research Directions}
	
	\begin{enumerate}
		\item \textbf{Non-exact non-displaceability:} Investigate whether Lagrangian non-displaceability holds for non-exact LCS structures or if the harmonic component allows displacement.
		
		\item \textbf{LCS Floer theory:} Develop Floer homology for non-exact LCS manifolds, adapting to the twisted differential $d^\omega$.
		
		\item \textbf{Spectral invariants:} Define and study spectral invariants for the LCS-Hofer metric using generating functions or Floer theory.
		
		\item \textbf{Higher-dimensional invariants:} Extend the twisted Calabi invariant to higher-dimensional invariants using the $d^\omega$-cohomology ring.
		
		\item \textbf{Applications to dissipative systems:} Apply LCS geometry to model mechanical systems with friction, magnetic fields, or other non-conservative forces.
	\end{enumerate}
	
	\begin{center}
		\textbf{Acknowledgments}
	\end{center}
	
	We thank the anonymous referees for their valuable comments and suggestions which helped improve this paper.
	
	\textbf{Data Availability}\\
	All data generated or analyzed during this study are included in this published article.\\
	
	\textbf{Funding}\\
	The authors declare that no funding was received for this research.\\
	
	\textbf{The authors Claim No Conflict of Interest.}

\end{document}